\numberwithin{equation}{section}
\newtheorem{theorem}{Theorem}[section]
\newtheorem{lemma}[theorem]{Lemma}
\newtheorem{proposition}[theorem]{Proposition}
\newtheorem{corollary}[theorem]{Corollary}
\theoremstyle{definition}
\newtheorem{definition}[theorem]{Definition}
\newtheorem{example}[theorem]{Example}
\newtheorem{notation}[theorem]{Notation}
\theoremstyle{remark}
\newtheorem{remark}[theorem]{Remark}
   \def\MR#1{}
\newcommand{\Sp}{\mathbb{S}}
\newcommand{\R}{\mathbb{R}}
\newcommand{\Z}{\mathbb{Z}}
\newcommand{\N}{\mathbb{N}}
\newcommand{\GL}{\operatorname{GL}}
\newcommand{\SO}{\operatorname{SO}}
\newcommand{\co}{\colon\thinspace}
\newcommand{\HOM}{\operatorname{Hom}}
\newcommand{\Ad}{\mathrm{Ad}}
\newcommand{\Sat}{\operatorname{Sat}}
\newcommand{\pr}{\operatorname{pr}}
\newcommand{\ftimes}[2]{{\lrsubscripts{\times}{#1}{#2}}}
\begin{document}

\title{Orbifold Euler characteristics of non-orbifold groupoids}

\author{Carla Farsi}
\address{Department of Mathematics, University of Colorado at Boulder,
    UCB 395, Boulder, CO 80309-0395}
\email{farsi@euclid.colorado.edu}

\author{Christopher Seaton}
\address{Department of Mathematics and Computer Science,
    Rhodes College, 2000 N. Parkway, Memphis, TN 38112}
\email{seatonc@rhodes.edu}

\keywords{orbifold Euler characteristic, Euler characteristic,
proper topological groupoid, proper Lie groupoid, inertia groupoid,
inertia space, semialgebraic groupoid.}
\subjclass[2010]{Primary 22A22, 57S15; Secondary 14P10, 57R18}
\thanks{C.F. was partially supported by the sabbatical program of the University of Colorado-Boulder,
and the Simons Foundation Collaboration Grant for Mathematicians \#523991.
C.S. was supported by the E.C.~Ellett Professorship in Mathematics and the sabbatical program of Rhodes College.}

\begin{abstract}
For a finitely presented discrete group $\Gamma$, we introduce two generalizations of the orbifold Euler characteristic and $\Gamma$-orbifold Euler characteristic to a class of proper topological groupoids large enough to include all cocompact proper Lie groupoids. The \emph{$\Gamma$-Euler characteristic} is defined as an integral with respect to the Euler characteristic over the orbit space of the groupoid, and the \emph{$\Gamma$-inertia Euler characteristic} is the usual Euler characteristic of the \emph{$\Gamma$-inertia space} associated to the groupoid. A key ingredient is the application of o-minimal structures to study orbit spaces of topological groupoids. Our main result is that the $\Gamma$-Euler characteristic and $\Gamma$-inertia Euler characteristic coincide and generalize the higher-order orbifold Euler characteristics of Gusein-Zade, Luengo, and Melle-Hern\'{a}ndez from the case of a translation groupoid by a compact Lie group and $\Gamma = \mathbb{Z}^\ell$. By realizing the $\Gamma$-Euler characteristic as the usual Euler characteristic of a topological space, we demonstrate that it is Morita invariant in the category of topological groupoids and satisfies familiar properties of the classical Euler characteristic. We give an additional formulation of the $\Gamma$-Euler characteristic for a cocompact proper Lie groupoid in terms of a finite covering by orbispace charts. In the case that the groupoid is an abelian extension of a translation groupoid by a bundle of groups, we relate the $\Gamma$-Euler characteristics to those of the translation groupoid and bundle of groups.
\end{abstract}

\maketitle

\tableofcontents


\section{Introduction}
\label{sec:Intro}

The string-theoretic orbifold Euler characteristic, often called simply the \emph{orbifold Euler characteristic},
was introduced for a global quotient orbifold (the quotient of a manifold by a finite group) in \cite{DixonHarveyEA}
and generalized to non-global quotient orbifolds in \cite{Roan}. For the quotient of a manifold $X$ by the finite
group $G$, it is given by
\begin{equation}
\label{eq:StringOrbEuler}
    \chi^{orb}(X, G)
    =
    \sum\limits_{[g]\in \Ad_G\!\backslash G} \chi\big( C_G(g)\backslash X^{\langle g \rangle}\big).
\end{equation}
See Notation~\ref{not:ConjClassCent}.
It was shown to be equal to the Euler characteristic of orbifold $K$-theory for
global quotients in \cite{AtiyahSegal} and effective orbifolds in \cite{AdemRuanTwisted}, see also
\cite{AdemLeidaRuan,KuhnCharacter,Slominska,tomDieckTGRepThry}, and to the usual Euler characteristic of a resolution
in \cite{HirzebruchHoefer}. As well, the orbifold Euler characteristic is equal to the usual Euler characteristic of
the \emph{inertia orbifold}; see \cite{AdemLeidaRuan}.

For global quotients, the orbifold Euler characteristic was identified as part of
a sequence of higher-order orbifold Euler characteristics in \cite{BryanFulman,AtiyahSegal}.
Tamanoi \cite{TamanoiMorava,TamanoiCovering} identified these higher-order orbifold Euler characteristics
as the $\Z^\ell$-extensions of the orbifold Euler characteristic in a context where the $\Gamma$-extension
was defined for each finitely generated group $\Gamma$. These $\Gamma$-orbifold Euler characteristics were
generalized to non-global quotient orbifolds by the authors in \cite{FarsiSeaGenOrbEuler} using the
generalized twisted sectors defined in \cite{FarsiSeaVectorFld,FarsiSeaGenTwistSec}. The $\Gamma$-orbifold
Euler characteristics are the usual Euler characteristic of the \emph{orbifold of $\Gamma$-sectors},
with $\Gamma = \Z$ corresponding to the inertia orbifold and hence the orbifold Euler characteristic
of Equation~\eqref{eq:StringOrbEuler}. Note that another, generally rational, numerical invariant, here called
the \emph{Euler-Satake characteristic} (sometimes also called the \emph{orbifold Euler characteristic}, among
other names) has played a significant role in some of the above references, though does not appear to be as
relevant in the context considered here. See Remarks~\ref{rem:EulerSatake} and \ref{rem:EulerSatake2}.

In \cite{GZEMH-HigherOrbEulerCompactGroup}, Gusein-Zade, Luengo, and Melle-Hern\'{a}ndez defined
a far-reaching generalization of the orbifold Euler characteristic and higher-order orbifold Euler
characteristics, which we will see in Theorem~\ref{thrm:TranslationECequal} correspond to
$\Gamma = \Z^\ell$, to the case of a sufficiently
nice $G$-space $X$ where $G$ is a compact Lie group; see Definition~\ref{def:GZEC} below.
Using integration with respect to the Euler characteristic \cite{ViroIntegralEulerChar}, the sum in
Equation~\eqref{eq:StringOrbEuler} is reinterpreted as an integral over the space of conjugacy classes
of the group, yielding a definition that is well-defined in the presence of infinite isotropy groups.
The $\Z^\ell$-orbifold Euler characteristics are then defined recursively as an iterated integral.

One natural question about this definition, which is the initial motivation for this paper,
is whether the generalized orbifold Euler characteristics depend on the specific presentation of a quotient
$G\backslash X$ as a $G$-space or are invariant under equivalences of such presentations.
An appropriate framework in which to make this question precise is that of proper topological groupoids,
where the appropriate notion of equivalence is that of Morita equivalence.
There are multiple possible approaches to extending the definitions of the generalized orbifold Euler characteristics of
\cite{GZEMH-HigherOrbEulerCompactGroup} to topological groupoids, and the first main goals of this paper are to
give these formulations and show that they coincide. In order to restrict to topological groupoids whose orbit spaces and strata
are sufficiently tame so that their Euler characteristics are defined, we introduce the use of o-minimal structures
to study the orbit spaces of topological groupoids, which may be of interest for other applications.
Specifically, we define the $\Gamma$-Euler characteristics for \emph{orbit space definable groupoids}
introduced in Section~\ref{subsec:OrbitDefin}, defined with respect to an o-minimal structure on $\R$
which we will always assume contains the semialgebraic sets. These groupoids satisfy minimal hypotheses required to define
the $\Gamma$-Euler characteristics and include the case of cocompact proper Lie groupoids as well
as other important cases; see Proposition~\ref{prop:LieGpdOrbitDefin} and
Corollary~\ref{cor:DefinGpdOrbitDefin}. We in particular consider the case where the spaces of objects and arrows have
compatible structures as affine definable spaces, inducing a definable structure on the orbit space; see
Proposition~\ref{prop:DefinGpdOrbitDefin}.
The additional structure they carry, an embedding of the orbit space into $\R^n$,
ensures that the orbit space and its orbit type strata have well-defined Euler characteristics, but by the results of
\cite{BekeInvarEC} recalled in Theorem~\ref{thrm:ECHomeoInvar} below, the Euler characteristics do not depend on
the embedding nor on the choice of o-minimal structure. Hence, we view the orbit space definable structure of a groupoid
as a device used to define and compute Euler characteristics, which are invariants of the underlying topological groupoids.
The o-minimal structure of semialgebraic sets is adequate for
almost all of the cases we have in mind, and the reader unfamiliar with more general notions of definability
is welcome to replace ``definable" with ``semialgebraic" throughout the paper. However, the minor technical
challenges of considering a larger o-minimal structure avoid unnecessary restrictions on the descriptions of the
spaces to which these results can be applied; see Section~\ref{subsec:OrbitDefin}.

For a translation groupoid $G\ltimes X$ associated to a $G$-space as above, the
Euler characteristics of \cite{GZEMH-HigherOrbEulerCompactGroup} are, loosely speaking, defined by integrating
the Euler characteristic of the fixed points of a group element over the group factor. One approach to generalizing
to a topological groupoid $\mathcal{G}$, yielding the \emph{$\Gamma$-inertia Euler characteristic} of
Definition~\ref{def:GamInertiaDefinable}(ii), is to realize
this integral as the usual Euler characteristic of a topological space, the \emph{$\Gamma$-inertia space},
which is the orbit space of the \emph{$\Gamma$-inertia groupoid}; see
Definition~\ref{def:GamInertia}. The $\Gamma$-inertia space of $\mathcal{G}$ depends only on the Morita equivalence class of
$\mathcal{G}$ as a topological groupoid by Lemma~\ref{lem:InertHomomorph}, so this definition is easily seen to be Morita
invariant, settling the question above. Because the $\Gamma$-inertia space is a generalization of the
inertia orbifold and orbifold of $\Gamma$-sectors to arbitrary topological groupoids,
this generalizes the characterization of the orbifold Euler characteristic of an orbifold as the Euler characteristic
of the inertia orbifold. The $\Z^\ell$-inertia groupoid has been considered in the context of equivariant $K$-theory in
\cite{AdemGomez}. In the case $\Gamma = \Z$, the $\Z$-inertia groupoid (called simply the \emph{inertia groupoid})
has appeared implicitly in the study of cyclic homology of convolution algebras of translation groupoids in
\cite{BrylinskiCyclic} and plays an important role in string topology
\cite{BehrendGinoteaStringTop,BehrendXuDiffStackGerbes}.
It was studied for translation groupoids in \cite{FarsiPflaumSeaton1,PPTGrauertGrothendieck},
and proper Lie groupoids in \cite{FarsiPflaumSeaton2} where the inertia groupoid was shown explicitly
to have the structure of a \emph{differentiable stratified groupoid}; see also \cite{CrainicMestre}.

Another approach to generalization, yielding the \emph{$\Gamma$-Euler characteristic} of Definition~\ref{def:ECGroupoid},
is to mimic the approach of \cite{GZEMH-HigherOrbEulerCompactGroup}, taking advantage of the product structure of a translation
groupoid $G\ltimes X$ to integrate along one factor. This is done by reinterpreting the integral over $\Ad_G\!\backslash G$
as an integral over the quotient space $G\backslash X$, yielding a formulation that generalizes readily to groupoids.
The two approaches are shown to yield Euler characteristics that coincide by an application of Fubini's Theorem, see Theorem~\ref{thrm:GamECInertia},
and to equal the Euler characteristics of Gusein-Zade, Luengo, and Melle-Hern\'{a}ndez in the case of a translation
groupoid by a second application of Fubini's Theorem; see Theorem~\ref{thrm:TranslationECequal}.
This yields a definition of orbifold Euler characteristics for a large class of proper topological groupoids,
generalizing both beyond the class of translation groupoids and to
$\Gamma$-Euler characteristics for finitely presented discrete groups $\Gamma$ that need not be
free abelian. Moreover, even in the case where the groupoid is (Morita equivalent to) the extension of a
translation groupoid by a bundle of compact groups, the generalized $\Gamma$-Euler characteristics do not
appear to be directly related to that of the translation groupoid; see Example~\ref{ex:NonabelExtension}.
Note that for non-orbifold groupoids, $\Gamma$ must be finitely presented, not merely finitely generated,
as explained in Remark~\ref{rem:FinitePres}.

One of our primary motivations is the case of a cocompact proper Lie groupoid, and we consider this case explicitly
throughout the paper. However, as was emphasized in
\cite{FarsiPflaumSeaton2}, the construction of the inertia groupoid leaves the category
of Lie groupoids. Moreover, as the Euler characteristic depends only on the underlying topology,
restricting consideration to proper Lie groupoids is artificial for our objectives.

A synopsis of the outline and main results of this paper is as follows.
In Section~\ref{sec:Background}, we summarize relevant background information
on topological and Lie groupoids, Morita equivalence, o-minimal structures, affine definable spaces, integration with respect to the Euler
characteristic, Fubini's theorem, and orbifold Euler characteristics. In Section~\ref{sec:OrbitDefinGpoids}, we introduce
orbit space definable groupoids and the $\Gamma$-Euler characteristics of these groupoids, showing that
they include cocompact proper Lie groupoids as well as semialgebraic translation groupoids. Section~\ref{sec:ECSpace}
introduces the $\Gamma$-inertia groupoid of a topological groupoid and establishes its basic properties, and the
$\Gamma$-inertia Euler characteristic is introduced in Section~\ref{subsec:GpoidECSpace}. We then
prove our first main result, Theorem~\ref{thrm:GamECInertia}, stating that the $\Gamma$-Euler characteristic of an
orbit space definable groupoid coincides with the $\Gamma$-inertia Euler characteristic when both are defined.
This is used to establish several properties of both Euler characteristics including Morita
invariance (Corollary~\ref{cor:ChiMoritaInvar}), additivity (Corollary~\ref{cor:GamECAdditive}), and
multiplicativity (Lemma~\ref{lem:GamECMultiplic}). In Section~\ref{sec:TranslationProperLie}, we consider the
special cases of cocompact proper Lie groupoids and translation groupoids. In Section~\ref{subsec:GpoidECTranslation},
we prove Theorem~\ref{thrm:TranslationECequal}, demonstrating that the $\Z^\ell$- and $\Z^\ell$-inertia Euler characteristics of a
translation groupoid coincide with the higher-order orbifold Euler characteristics of
\cite{GZEMH-HigherOrbEulerCompactGroup},
as well as Theorem~\ref{thrm:TranslationECNonIterat}, giving a non-iterative definition of the higher-order orbifold Euler
characteristics of a semialgebraic translation groupoid as well as the analogous $\Gamma$-extensions for $\Gamma$ that are not
free abelian. Section~\ref{subsec:GpoidECLie} considers the case of cocompact proper Lie groupoids and uses
the slice theorem of \cite{PPTOrbitSpace}, see also
\cite{Zung,WeinsteinLineariz,CrainicStruchiner,delHoyoFernandesMetricLieGpd},
to give an alternate formulation of the $\Gamma$-Euler
characteristics in this case: Theorem~\ref{thrm:GamECLieGIntegral}, which is closer to the original definition
of \cite{GZEMH-HigherOrbEulerCompactGroup}.
In Section~\ref{subsec:Extensions}, we consider extensions of translation groupoids by bundles of compact
Lie groups, which by the work of Trentinaglia \cite{TrentinagliaGlobStruc,TrentinagliaRoleReps} include a
large class of, and conjecturally all, proper Lie groupoids. We show in Theorem~\ref{thrm:AbelExtension}
that when such an extension is abelian, there is a simple relationship between the $\Gamma$-Euler
characteristics of the extension, the translation groupoid, and the bundle of compact Lie groups. However,
we indicate that in the non-abelian case, such a relationship cannot be expected, and hence the generalization
of the orbifold Euler characteristics given here is a nontrivial generalization from the translation groupoid case.


\section*{Acknowledgements}
We would like to thank the referee for helpful comments and suggestions that significantly improved
this manuscript.


\section{Background and Definitions}
\label{sec:Background}

In this section, we review the essential background information to fix the language and notation.


\subsection{Topological and Lie groupoids}
\label{subsec:BackGpoids}

In this section, we briefly recall facts about topological and Lie groupoids. For more details, the reader is referred to
\cite{BrownSurvey,MrcunThesis,MoerdijkToposGpoid,RenaultGpdCstar,MetzlerTopSmoothStacks} for topological groupoids and
\cite{MackenzieLGLADiffGeom,MoerdijkMrcun,WangThesis} for Lie groupoids.

Recall that a \emph{groupoid} $\mathcal{G}$ is given by two sets
$\mathcal{G}_0$ of objects and $\mathcal{G}_1$ of arrows along with the \emph{structure functions}:
the source $s_{\mathcal{G}}\co\mathcal{G}_1\to\mathcal{G}_0$,
target $t_{\mathcal{G}}\co\mathcal{G}_1\to\mathcal{G}_0$,
unit $u_{\mathcal{G}}\co\mathcal{G}_0\to\mathcal{G}_1$ written $u_{\mathcal{G}}(x) = 1_x$,
inverse $i_{\mathcal{G}}\co\mathcal{G}_1\to\mathcal{G}_1$,
and multiplication $m_{\mathcal{G}}\co\mathcal{G}_1 \ftimes{s_\mathcal{G}}{t_{\mathcal{G}}}\mathcal{G}_1\to\mathcal{G}_1$
written $m_{\mathcal{G}}(g,h) = gh$. For a point $x\in\mathcal{G}_0$, the \emph{isotropy group of $x$},
denoted $\mathcal{G}_x^x$, is the set $(s,t)^{-1}(x,x)$.
A \emph{topological groupoid} is a groupoid such that $\mathcal{G}_0$ and $\mathcal{G}_1$ are
topological spaces and the structure functions are maps, i.e., continuous functions.
The \emph{orbit space} $|\mathcal{G}|$ of a topological groupoid $\mathcal{G}$ is the quotient
space of $\mathcal{G}_0$ by the equivalence relation identifying $x, y\in\mathcal{G}_0$ if there
is a $g\in\mathcal{G}_1$ such that $s_{\mathcal{G}}(g) = x$ and $t_{\mathcal{G}}(g) = y$. We let
$\pi_{\mathcal{G}}\co\mathcal{G}_0\to|\mathcal{G}|$ denote the orbit map, which sends $x\in\mathcal{G}_0$
to its \emph{$\mathcal{G}$-orbit}, the equivalence class of $x$, denoted $\mathcal{G}x$.
Note that the subscript $\mathcal{G}$ on the structure maps and $\pi$ will
be omitted when there is no possibility of confusion.
In this paper, we will always assume that a topological groupoid is \emph{open}, meaning
$s$ is an open map. This ensures that the $\pi$ is open and hence
a quotient map; see \cite[Proposition 2.11]{TuNonHausdorff}. We will also always assume that
$\mathcal{G}_0$ and $\mathcal{G}_1$ are Hausdorff and paracompact.

A topological groupoid $\mathcal{G}$ is \emph{proper} if the map
$(s,t)\co\mathcal{G}_1\to\mathcal{G}_0\times\mathcal{G}_0$ is a proper map, i.e., the preimage of a compact
set is compact. It is \emph{cocompact} if the orbit space $|\mathcal{G}|$ is compact, and it is
\emph{\'{e}tale} if $s$ is a local homeomorphism.
A \emph{Lie groupoid} is a topological groupoid such that $\mathcal{G}_0$ and $\mathcal{G}_1$ are
smooth manifolds without boundary, the structure maps are smooth, and $s$ is a submersion.

If $G$ is a topological group and $X$ is a topological $G$-space, i.e., a topological space on which
$G$ acts continuously, then the \emph{translation groupoid}
$G\ltimes X$ has space of objects $X$ and space of arrows $G\times X$ with
$s(g,x) = x$, $t(g,x) = gx$, $u(x) = (1,x)$, $i(g,x) = (g^{-1}, gx)$, and composition
$(h,gx)(g,x) = (hg,x)$. The orbit space of $G\ltimes X$ is denoted $|G\ltimes X|$ or
$G\backslash X$. The translation groupoid is a topological groupoid. If
$X$ is a smooth $G$-manifold without boundary, i.e., the map $G\times X\to X$ is smooth, then
$G\ltimes X$ is a Lie groupoid.

If $\mathcal{G}$ is a topological groupoid and $X$ is a topological space, a \emph{left action}
of $\mathcal{G}$ on $X$ is given by an anchor map $\alpha_X\co X\to\mathcal{G}_0$ and an
action map $\mathcal{G}_1\ftimes{s}{\alpha_X}X \to X$, written $(g,x)\mapsto g\ast x$, such that
$\alpha_X(g\ast x) = t(g)$, $h\ast(g\ast x) = (hg)\ast x$, and $1_{\alpha_X(x)} \ast x = x$
for all $x\in X$ and $g,h\in\mathcal{G}_1$ such that these expressions are defined. We then refer to $X$ as
a \emph{$\mathcal{G}$-space}. The \emph{translation groupoid} $\mathcal{G}\ltimes X$ is defined similarly to
that of a group action;
the space of objects is $X$, space of arrows is $\mathcal{G}_1 \ftimes{s_{\mathcal{G}}}{\alpha_X} X$,
$s_{\mathcal{G}\ltimes X}(g,x) = x$, $t_{\mathcal{G}\ltimes X}(g,x) = g\ast x$,
$u_{\mathcal{G}\ltimes X}(x) = (1_{\alpha_X(x)},x)$, $i_{\mathcal{G}\ltimes X}(g,x) = (g^{-1},g\ast x)$,
the product is given by $(h, g\ast x)(g,x) = (hg, x)$, and the orbit space is denoted either
$|\mathcal{G}\ltimes X|$ or $\mathcal{G}\backslash X$. A Lie groupoid action
on a smooth manifold $X$ is defined similarly, requiring the anchor and action maps to be smooth, and then
$\mathcal{G}\ltimes X$ is a Lie groupoid.

If $\mathcal{G}$ and $\mathcal{H}$ are topological (respectively, Lie) groupoids, a \emph{homomorphism} $\Phi\co\mathcal{G}\to\mathcal{H}$ consists of two continuous (respectively, smooth) maps $\Phi_0\co\mathcal{G}_0\to\mathcal{H}_0$
and $\Phi_1\co\mathcal{G}_1\to\mathcal{H}_1$ that preserve each of the structure maps. A homomorphism of topological
(respectively, Lie) groupoids is an \emph{essential equivalence}, also known as a \emph{weak equivalence}, if the map
$t_\mathcal{H}\circ\pr_1\co\mathcal{H}_1\ftimes{s_{\mathcal{H}}}{\Phi_0}\mathcal{G}_0\to\mathcal{H}_0$, where $\pr_1$
is the projection onto the first factor, is a surjection admitting local sections (respectively, a surjective submersion) and the diagram
\[
    \xymatrix{
        \mathcal{G}_1
            \ar[d]_{(s_\mathcal{G},t_\mathcal{G})}
            \ar@{>}[r]^{\Phi_1}
        &\mathcal{H}_1
            \ar[d]^{(s_\mathcal{H},t_\mathcal{H})}
        \\
        \mathcal{G}_0\times\mathcal{G}_0
            \ar[r]^{\Phi_0\times\Phi_0}
        &
        \mathcal{H}_0\times\mathcal{H}_0,
    }
\]
is a fibred product of spaces (respectively, of smooth manifolds); see \cite[Definition~58 and Remark~59]{MetzlerTopSmoothStacks}.
Note that a surjective map admits local sections if each point in the codomain is contained in an open neighborhood on which the map
has a right inverse.
Two groupoids $\mathcal{G}$ and $\mathcal{H}$ are \emph{Morita equivalent}
as topological (Lie) groupoids if there is a third topological (Lie) groupoid $\mathcal{K}$ and essential equivalences
(of Lie groupoids) $\mathcal{G}\leftarrow\mathcal{K}\rightarrow\mathcal{H}$.


\subsection{Definable sets and integration with respect to the Euler characteristic}
\label{subsec:BackDefin}

In order to define the integral with respect to the Euler characteristic, we need to work within
a fixed o-minimal structure on $\R$. We give a very brief summary of the background and refer the reader
to \cite{vandenDriesBook} for more details on o-minimal structures. For the integral with respect
to the Euler characteristic; see \cite{ViroIntegralEulerChar} or \cite[Sections 2--4]{CurryGhristEAEulerCalc}.

Recall \cite[Chapter 1, Definition~(2.1)]{vandenDriesBook} that a \emph{structure} on $\R$ is a collection
$(\mathcal{S}_n)_{n\in\N}$ with the following properties: Each $\mathcal{S}_n$ is a boolean algebra of subsets of
$\R^n$ (i.e. is closed under unions and set differences) such that $A\times\R\in\mathcal{S}_{n+1}$
and $\R\times A\in\mathcal{S}_{n+1}$ for each $A\in\mathcal{S}_n$. Moreover, each $\mathcal{S}_n$ contains
the diagonal $\{(x,\ldots,x):x\in\R\}\subset\R^n$, and the projection of any $A\in\mathcal{S}_{n+1}$ to $\R^n$
obtained by dropping the last coordinate is contained in $\mathcal{S}_n$. A structure $(\mathcal{S}_n)_{n\in\N}$
is \emph{o-minimal} if $\mathcal{S}_1$
is the set of finite unions of points and open intervals and $\mathcal{S}_2$ contains $\{(x,y)\in\R^2 : x < y\}$.
Once an o-minimal structure is fixed, a set $A\subseteq\R^n$ is a \emph{definable set} if it is an element of $\mathcal{S}_n$.
If $A\subset\R^n$ is a definable set, a function $f\co A\to\R^m$ is a \emph{definable function} if its graph
$\{(x,f(x))\}\subset\R^{m+n}$ is definable.

An important example of an o-minimal structure, and the one that we primarily have in mind, is that
of \emph{semialgebraic sets}; see \cite[p.~1]{vandenDriesBook}.
A subset $A\subseteq\R^n$ is \emph{semialgebraic} if it is a finite union of solution sets
to finite systems of polynomial equations and inequalities, i.e., sets of the form
$f_1(x_1,\ldots,x_n) = \cdots = f_k(x_1,\ldots,x_n) = 0$ and
$g_1(x_1,\ldots,x_n) > 0,\ldots,g_m(x_1,\ldots,x_n) > 0$ (where $k$ or $m$ may be $0$).
We consider $A$ as a topological space using the subspace topology from $\R^n$.
We will sometimes restrict to the o-minimal structure of semialgebraic sets, and readers
are welcome to consider only this o-minimal structure as noted in the introduction.
When we work more generally, \emph{we will always assume that an o-minimal structure contains
the semi-algebraic sets}.

\begin{definition}[{\cite[Chapter~1, Section~3; Chapter~10, Section~1]{vandenDriesBook}}]
\label{def:definable}
Fix an o-minimal structure on $\R$.  An \emph{affine definable space} is a topological space $X$ along with a topological embedding
$\iota_X\co X\to \R^n$, i.e. a continuous function that is a homeomorphism onto its image, such that $\iota_X(X)$
is definable. We will often denote $\iota_X$ simply as $\iota$ if there is no danger of confusion.
If $(X,\iota_X)$ is an affine definable space, then a \emph{definable subset} $A$ of $X$ is a $A\subset X$ such that
$\iota_X(A)$ is definable. If $(X,\iota_X)$ and $(Y,\iota_Y)$ are affine definable spaces with $\iota_Y\co Y\to\R^m$, then a continuous
function $f\co X\to Y$ is a \emph{morphism of affine definable spaces} if the function $\iota_Y\circ f\circ\iota_X^{-1}\co\iota_X(X)\to\R^m$
is a definable function.
\end{definition}

\begin{remark}
\label{rem:DefSpacesOperations}
As the collection of definable sets is closed under finite unions, intersections, closures, and interiors
\cite[Chapter~1, Lemma~(3.4)]{vandenDriesBook}, the collection of definable subsets of an affine definable space is as well.
Similarly, the product of affine definable spaces is an affine definable space using the product embedding.
If $f\co X\to Y$ is a morphism of affine definable spaces $(X,\iota_X)$ and $(Y,\iota_Y)$, then $\iota_Y\circ f(X)$ is the projection
of the graph of $\iota_Y\circ f\circ\iota_X^{-1}$ onto the second factor so that $f(X)$ is a definable subset of $Y$.
Similarly, images and preimages of definable subsets via morphisms of affine definable spaces are definable, and restrictions of
morphisms to definable subsets are morphism of affine definable spaces.
\end{remark}

\begin{remark}
\label{rem:GenDefinableSpaces}
There is a more general notion of a (non-affine) definable space as a topological space that is covered by an atlas of definable sets with
definable transition functions; see \cite[Chapter 10, Section~(1.2)]{vandenDriesBook}. However, by \cite[Chapter 10, Theorem~(1.8)]{vandenDriesBook},
every definable space $(X,\iota)$ such that $X$ is a regular topological space is affine. Since our intended applications are to
regular spaces, we restrict consideration to affine definable spaces.
\end{remark}

\begin{remark}
\label{rem:DefinableSpaceNotUnique}
Of course, a topological space $X$ may admit many distinct structures as an affine definable space even using the same o-minimal structure,
and the definable subsets of $X$ need not coincide. However, our intended application of this structure is to define the Euler characteristic,
which will not depend on these choices; see Theorem~\ref{thrm:ECHomeoInvar} below.
\end{remark}

We now recall the definition of the Euler characteristic of a definable set.
Fix an o-minimal structure on $\R$. Then for each definable set $A\subseteq\R^n$, the
\emph{Euler characteristic} $\chi(A)$ of $A$ is defined. Specifically, any definable set $A$ can be
partitioned into finitely many simple sets called \emph{cells}, the Euler characteristic
of a cell of dimension $d$ is defined to be $(-1)^d$, and then $\chi(A)$ is defined as the
sum of the Euler characteristics of its cells; see \cite[Chapters 3, 4]{vandenDriesBook}.
We extend the definition of the Euler characteristic to affine definable sets $(X,\iota)$ by defining
$\chi(X) = \chi\big(\iota(X)\big)$.
When $X$ is compact, this definition of the Euler characteristic coincides with the usual definition of the Euler
characteristic of a triangulable topological space, but $\chi$ is not homotopy-invariant in general.
For instance, the Euler characteristic of a finite or infinite open interval (a single $1$-dimensional cell)
is $-1$, and the Euler characteristic of a finite or infinite half-open interval (the union of a $0$-cell,
with Euler characteristic $1$, and a $1$-cell, with Euler characteristic $-1$) is $0$.
This definition yields an Euler characteristic that is invariant under definable homeomorphisms
and is finitely (but not countably) additive. Moreover, by the following result of Beke, it does not depend
on the choice of o-minimal structure and hence depends only on the underlying topological space $X$; see also
\cite{McCroryParusinski}.

\begin{theorem}[{\cite[Theorem 2.2]{BekeInvarEC}}]
\label{thrm:ECHomeoInvar}
Let $\mathcal{S}$ and $\mathcal{S}^\prime$ be o-minimal structures on $\R$, let $A$ be a definable subset of $\R^n$ with respect to
$\mathcal{S}$, and let $B$ be a definable subset of $\R^m$ with respect to $\mathcal{S}^\prime$. Let $\chi(A)$ be the Euler characteristic
of $A$ with respect to $\mathcal{S}$ and let $\chi^\prime(B)$ be the Euler characteristic
of $B$ with respect to $\mathcal{S}^\prime$. If $A$ and $B$ are homeomorphic, then $\chi(A) = \chi^\prime(B)$.
\end{theorem}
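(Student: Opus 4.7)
The plan is to identify both $\chi(A)$ and $\chi^\prime(B)$ with a single purely topological invariant of the underlying space, namely the compactly supported Euler characteristic $\chi_c(X) = \sum_i (-1)^i \dim_\Q H_c^i(X;\Q)$. Once this identification is made, the theorem reduces to the standard invariance of $\chi_c$ under homeomorphisms of locally compact Hausdorff spaces.

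First I would verify that any definable subset $A \subseteq \R^n$, in any o-minimal structure containing the semialgebraic sets, is locally compact Hausdorff in the subspace topology. Hausdorffness is inherited from $\R^n$, and local compactness follows from the cell decomposition theorem in \cite[Chapter 3]{vandenDriesBook}: $A$ is a finite disjoint union of cells, each locally closed in $\R^n$, and this finite stratification produces a compact neighborhood of every point of $A$ inside $A$. Next I would establish the identity $\chi(A) = \chi_c(A)$. Because each open $d$-dimensional cell is definably homeomorphic to an open box in $\R^d$, hence to $\R^d$, and because $\chi_c(\R^d) = (-1)^d$, each cell contributes to $\chi_c(A)$ exactly what it contributes to $\chi(A)$ by its defining formula. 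The required additivity of $\chi_c$ over finite partitions into locally closed pieces is the well-known consequence of the long exact sequence in compactly supported cohomology for an open-closed decomposition $A = U \sqcup (A \setminus U)$, applied inductively over the cells.

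The analogous argument in the structure $\mathcal{S}^\prime$ gives $\chi^\prime(B) = \chi_c(B)$, and since a homeomorphism $A \to B$ of locally compact Hausdorff spaces induces an isomorphism on compactly supported cohomology, we conclude $\chi(A) = \chi_c(A) = \chi_c(B) = \chi^\prime(B)$. The main obstacle I anticipate is the inductive bookkeeping in the identification $\chi(A) = \chi_c(A)$: one needs to order the cells so that at each stage of the induction, the accumulated union is open in the union obtained by adjoining the next cell, allowing the long exact sequence of the open-closed pair to apply cleanly. This can be arranged by processing cells in a suitable order (for instance by decreasing dimension), and every other ingredient is either standard o-minimal machinery from \cite{vandenDriesBook} or classical in sheaf theory.
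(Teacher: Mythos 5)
Your reduction to the compactly supported Euler characteristic breaks down at the very first step: it is not true that every definable subset of $\R^n$ is locally compact. A finite disjoint union of locally closed cells need not be locally closed or locally compact. For instance, the semialgebraic set $A=\{(x,y)\in\R^2: y>0\}\cup\{(0,0)\}$ is the union of an open half-plane and a point, but it has no compact neighborhood of the origin inside $A$: any such neighborhood $K$ would contain $\{(x,y): x^2+y^2<r^2,\ y>0\}$ for some $r>0$ and, being compact and hence closed in $\R^2$, would then contain $(r/2,0)\notin A$. Since the identity $\chi=\chi_c$, the additivity of $\chi_c$ via the open--closed long exact sequence, and the homeomorphism invariance of $\chi_c$ are standard only for locally compact Hausdorff spaces, your argument establishes the theorem only for locally compact definable sets. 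That case is indeed classical; the entire content of the cited result of Beke \cite{BekeInvarEC} (which the paper imports rather than reproves) is the extension to definable sets that fail to be locally compact, and this is exactly where a homeomorphism that is not definable in either structure could a priori change the combinatorial count.

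The missing idea is a topologically canonical reduction to the locally compact case. Beke's argument considers the subset $\lambda(X)\subseteq X$ of points at which $X$ fails to be locally compact, shows that for definable $X$ this is a closed definable subset of strictly smaller dimension, so that the filtration $X\supseteq\lambda(X)\supseteq\lambda(\lambda(X))\supseteq\cdots$ terminates and each difference $\lambda^{i}(X)\smallsetminus\lambda^{i+1}(X)$ is a locally compact definable set. Finite additivity of the combinatorial $\chi$ then gives $\chi(X)=\sum_i\chi_c\bigl(\lambda^{i}(X)\smallsetminus\lambda^{i+1}(X)\bigr)$, and since $\lambda$ is defined purely topologically, this expression is preserved by an arbitrary homeomorphism $A\to B$. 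Your computation of $\chi_c$ on cells and the additivity bookkeeping are fine as far as they go, but without a device of this kind (or some other way to handle the non-locally-compact locus) the proof cannot be completed; I would suggest either restricting your claim to locally compact definable sets or incorporating the filtration by $\lambda$ as above.
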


\begin{remark}
\label{reem:ECHomeoInvar}
Because the Euler characteristic of an affine definable space $(X,\iota)$ is defined to be $\chi(X) = \chi\big(\iota(X)\big)$,
it follows immediately from Theorem~\ref{thrm:ECHomeoInvar} that $\chi(X)$ does not depend on $\iota$ nor on the choice of o-minimal structure,
and hence is an invariant of topological spaces that admit the structure of affine definable sets.
\end{remark}

We will need the notion of a definably proper equivalence relation and quotient;
see also \cite[Theorem~1.4]{BrumfielQuot} for the semialgebraic case. Recall
\cite[Chapter 6, Definition~(4.4)]{vandenDriesBook} that a definable continuous function $f\co A\to B$ between
definable sets is \emph{definably proper} if the preimage of every compact definable subset of $B$ is compact.
Clearly, if $f$ is definable, continuous, and proper, then it is definably proper.
An equivalence relation $E\subseteq A\times A$ is a \emph{definably proper equivalence relation} if
$E$ is definable and the projection $\pr_1\co E\to A$ onto the first factor is definable
\cite[Chapter 10, Definition~(2.13)]{vandenDriesBook}. If $E$ is an equivalence relation
on $A$, then a \emph{definably proper quotient of $A$ by $E$} is a definable set $B\subset\R^k$ and a definable
continuous surjective map $\pi\co A\to B$ such that $\pi(x_1) = \pi(x_2)$ if and only if $(x_1,x_2)\in E$,
and $\pi$ is definably proper. An equivalence relation admits a definably proper quotient if and only if
it is a definably proper equivalence relation \cite[Chapter 10, (2.13) and Theorem~(2.15)]{vandenDriesBook}.
We extend this notion to an affine definable space $(X,\iota)$ by identifying $X$ with $\iota(X)$:
a \emph{definably proper equivalence relation} $E$ on $X$ is an equivalence relation $E\subset X\times X$
such that $(\iota\times\iota)(E)$ is a definably proper equivalence relation on $\iota(X)$,
and a \emph{definably proper quotient of $X$ by $E$} is a definably proper quotient of $\iota(X)$
by $(\iota\times\iota)(E)$.

\begin{remark}
\label{rem:QuotUnique}
By \cite[Chapter 10, Definition~(2.2) and Remark~(2.3)]{vandenDriesBook}, a
definably proper quotient is unique up to definable homeomorphism.
If $\pi\co\iota(X)\to B\subset\R^k$ is a definably proper quotient of $X$ and
$\rho\co X\to E\backslash X$ denotes the usual topological quotient of $X$ by the equivalence relation $E$,
then the embedding $\iota$ induces a homeomorphism $E\backslash\iota\co E\backslash X\to B\subset\R^k$ such that
$E\backslash\iota\circ\rho=\pi\circ\iota$; see \cite[Corollary ~22.3]{Munkres}. Hence, $(E\backslash X, E\backslash\iota)$
is an affine definable spaces whose image in $\R^k$ as a definably proper quotient of $X$ by $E$.
\end{remark}

An \emph{affine definable topological group} is a pair $(G,\lambda)$ such that $G$ is a topological group, $\lambda\co G\to\R^\ell$
is a topological embedding, and the product and inverse maps are morphisms of definable spaces; equivalently,
the induced product $\lambda(G)\times\lambda(G)\to\lambda(G)\subset\R^\ell$ given by $\big(\lambda(g),\lambda(h)\big)\mapsto\lambda(gh)$
and inverse $\lambda(G)\to\lambda(G)\subset\R^\ell$ given by $\lambda(g)\mapsto\lambda(g^{-1})$ are definable maps.
For a definable topological group $(G,\lambda)$, a \emph{definable $G$-set $A$} is topological $G$-space $A$ such that $A\subset\R^n$ is
a definable set and the action $\lambda(G)\times A\to A$ defined by $\lambda(g)a = ga$ is definable. An \emph{affine definable $G$-space}
is an affine definable space $(X,\iota)$ such that $X$ is a $G$-space and $G\times X\to X$ is a morphism of affine definable spaces,
i.e., the map $\lambda(G)\times\iota(X)\to\iota(X)$ given by $\big(\lambda(g),\iota(x)\big)\mapsto\iota(gx)$ is definable.
If $G$ is compact, then the orbit space $G\backslash X$ admits the structure of an affine definable space whose image in $\R^k$
is a definably proper quotient by \cite[Chapter 10, Corollary~(2.18)]{vandenDriesBook} and Remark~\ref{rem:QuotUnique}.

Following \cite{ChoiParkSuhSemialgSlice,ParkSuhLinEmbed}, we will consider semialgebraic groups and $G$-sets to be subsets of Euclidean space. That is,
a \emph{semialgebraic group $G$} is a semialgebraic set $G\subset\R^\ell$ that is a topological group with the subspace topology such that the multiplication
and inversion maps are definable in the o-minimal structure of semialgebraic sets. If $G$ is a semialgebraic group, a \emph{semialgebraic $G$-set $A$}
is a semialgebraic set $A\subset\R^n$ that is also a $G$-space (with the subspace topologies on $G$ and $A$) such that the action map $G\times A\to A$
is definable in the o-minimal structure of semialgebraic sets. See \cite[Section~2]{ChoiParkSuhSemialgSlice} and \cite[Section~2]{ParkSuhLinEmbed}
for additional details and background. If one uses any o-minimal structure that contains the semialgebraic sets, a semialgebraic group $G$ is an
affine definable topological group, and a semialgebraic $G$-set $A$ is an affine definable $G$-space, with respect to the identity embeddings
of $G$ and $A$.

Any compact Lie group $G$ admits a faithful representation on $\R^m$ \cite[Chapter~VI, Section~XII, Theorem~4]{ChevalleyThryLieGrps}
and hence admits an isomorphism $\lambda\co G\to \lambda(G)\leq\GL(m,\R)\subset\R^{m^2}$ to a compact linear algebraic group $\lambda(G)$, i.e., a subgroup
$\lambda(G)\leq\GL(m,\R)$ that is the solution of finitely many polynomial equations \cite[Section~99, Theorem~3 and Corollary]{Zelobenko}. It follows that
$\lambda(G)$ is a semialgebraic group and hence $(G,\lambda)$ is an affine definable topological group with respect to any o-minimal structure
that contains the semialgebraic sets (as the graphs of the multiplication and inverse maps are semialgebraic, and hence contained in any larger
o-minimal structure). If $\lambda^\prime\co G\to\lambda^\prime(G)\leq\GL(m^\prime,\R)$ is another choice of isomorphism of $G$ such that $\lambda^\prime(G)$
is a semialgebraic group, then $\lambda^\prime\circ\lambda^{-1}$ is a group isomorphism and hence a semialgebraic function by
\cite[Corollary~2.3]{ParkSuhLinEmbed}; hence, the semialgebraic structure a compact Lie group inherits as a linear algebraic group is unique.
For this reason, and particularly because most
of the groups we consider will be compact, we will usually consider compact Lie groups with their unique structure as definable topological groups with no loss of
generality. We will not use this fact, but it is interesting to note: if $A\subset\R^n$ is a definable set that is also a group such
that the multiplication and inverse functions are definable but not necessarily continuous, then $A$ admits a topology with respect to which it is a Lie group; see \cite{Pillay}.

We will make frequent use of the following and hence include a proof for clarity.

\begin{lemma}
\label{lem:SemialgEquivarFunc}
Suppose $(G, \lambda)$ is a compact affine definable topological group, $(X, \iota_X)$ and $(Y, \iota_Y)$ are affine definable $G$-spaces, and
$f\co X\to Y$ is a $G$-equivariant morphism of affine definable spaces. Giving the orbit spaces $G\backslash X$ and $G\backslash Y$ the structures
of affine definable spaces as in Remark~\ref{rem:QuotUnique}, the induced map
$\overline{f}\co G\backslash X\to G\backslash Y$ is a morphism of affine definable spaces.
\end{lemma}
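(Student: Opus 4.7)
The plan is to transfer everything through the given embeddings and reduce the claim to showing that the transferred map on orbit spaces has a definable graph, then separately verify continuity. Set $A := \iota_X(X) \subset \R^n$, $B := \iota_Y(Y) \subset \R^m$, and $F := \iota_Y \circ f \circ \iota_X^{-1} \co A \to B$; by hypothesis $F$ is continuous, definable, and $\lambda(G)$-equivariant. By Remark~\ref{rem:QuotUnique}, the affine definable space structures on $G \backslash X$ and $G \backslash Y$ arise from definably proper quotients $q_A \co A \to B_A \subset \R^k$ and $q_B \co B \to B_B \subset \R^{k^\prime}$, and $\overline{f}$ corresponds under the induced embeddings $G \backslash \iota_X$ and $G \backslash \iota_Y$ to the unique set-theoretic map $\overline{F} \co B_A \to B_B$ characterized by $\overline{F} \circ q_A = q_B \circ F$. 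This $\overline{F}$ is well-defined because $F$ is $G$-equivariant and $q_B$ is constant on $G$-orbits.

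Continuity of $\overline{f}$ follows from the universal property of the quotient topology: the composition $q_B \circ F$ is continuous and constant on fibres of $q_A$, so it factors through a continuous map $\overline{F} \co B_A \to B_B$.

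For definability, the key observation I would make is that the graph of $\overline{F}$ in $B_A \times B_B \subset \R^{k + k^\prime}$ coincides with the image of $A$ under the map $\Psi \co A \to \R^{k + k^\prime}$ defined by $a \mapsto \big(q_A(a),\, q_B(F(a))\big)$. Indeed, $(u,v)$ lies in the graph of $\overline{F}$ if and only if there exists $a \in A$ with $q_A(a) = u$ and $q_B(F(a)) = v$, by surjectivity of $q_A$ together with the defining relation for $\overline{F}$. Since $q_A$, $q_B$, and $F$ are all definable, so is $\Psi$, and the image of a definable set under a definable map is definable: it is the projection onto the last $k + k^\prime$ coordinates of the definable graph of $\Psi$, and such projections preserve definability by the structure axioms (see also Remark~\ref{rem:DefSpacesOperations}). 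Hence the graph of $\overline{F}$ is definable, so $\overline{F}$ is a definable function, and combined with continuity this shows that $\overline{f}$ is a morphism of affine definable spaces. The argument is essentially bookkeeping; the only mild obstacle is cleanly writing the graph of $\overline{F}$ as the image of $\Psi$, after which everything reduces to standard closure properties of o-minimal structures.
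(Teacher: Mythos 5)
Your proof is correct and is essentially the paper's argument: both identify the graph of $\overline{f}$ as the image of a definable set under definable maps, the paper by presenting it as the $(G\times G)$-quotient of $\operatorname{graph}(f)$ and you by taking the image of $X$ under $a\mapsto\big(q_A(a),q_B(F(a))\big)$, which is the same set. Your packaging sidesteps the $(G\times G)$-action bookkeeping and makes the continuity step explicit, but the underlying idea is identical.
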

\begin{proof}
By Remark~\ref{rem:QuotUnique}, the structures of $G\backslash X$ and $G\backslash Y$ as affine
definable spaces do not depend on the choice of definably proper quotients.
For simplicity, we identify $X$ with $\iota_X(X)$, $Y$ with $\iota_Y(Y)$, $G$ with $\lambda(G)$, and $f$ with
$\iota_Y\circ f\circ\iota_X^{-1}$, and hence work with the corresponding definable sets in Euclidean space.
Define a $(G\times G$)-action on the graph of $f$ in $X\times Y$ by
$(g_1,g_2)(x,f(x)) = (g_1 x, g_2 f(x))$; the graph of this action is the product of the graphs of
the $G$-actions on $X$ and $f(Y)$ and hence is a definable set; see Remark~\ref{rem:DefSpacesOperations}. Then
$\operatorname{graph}(f)$ is a definable $(G\times G)$-set, and the quotient
$(G\times G)\backslash\operatorname{graph}(f)$ is the graph of $\overline{f}$.
\end{proof}

We now review the integration of constructible functions on an affine definable space with respect to the Euler characteristic.
Let $(X,\iota)$ be an affine definable space. A function $f\co X\to\Z$ is \emph{constructible} if $f^{-1}(k)$
is a definable subset of $X$ for each $k\in\Z$. If $f$ is a bounded constructible function, then the
\emph{integral of $f$ over $X$ with respect to the Euler characteristic} is defined to be
\[
    \int_X f(x)\,d\chi(x) =  \sum\limits_{k\in\Z} k \chi\big( f^{-1}(k)\big).
\]
Note that the image of $f$ in $\Z$ is finite so that this sum is finite as well. Hence the integral
is well-defined by the finite additivity of $\chi$.

As noted in \cite[Proposition 4.2]{CurryGhristEAEulerCalc}, it is often easier to compute the above integral using
the equivalent formulation
\begin{equation}
\label{eq:IntEC}
    \int_X f(x)\,d\chi(x)
        =   \sum\limits_{k=0}^\infty \Big[\chi\big( f^{-1}(\{ j : j > k\})\big) - \chi\big(f^{-1}(\{ j : j < -k\} ) \big)\Big].
\end{equation}
Note that if $f(x) = c$ is constant, then $\int_X f(x) \, d\chi(x) = c \chi(X)$.
Note further that if $X$ is partitioned into finitely many disjoint definable subsets $X_1,\ldots, X_m$, then
$\int_X f(x)\,d\chi(x) = \sum_{i=1}^m \int_{X_i} f(x)\,d\chi(x)$.

\begin{remark}
\label{rem:intEChomeoInvar}
An immediate consequence of Theorem~\ref{thrm:ECHomeoInvar} and either the definition or Equation~\eqref{eq:IntEC}
is that $\int_X f(x)\,d\chi(x)$ does not depend on the choice of o-minimal structure nor the embedding $\iota$.
\end{remark}

The following version of Fubini's Theorem will play an important role; see \cite[Theorem 4.5]{CurryGhristEAEulerCalc}
and \cite[3.A]{ViroIntegralEulerChar}.

\begin{theorem}[(Fubini's Theorem)]
\label{thrm:Fubini}
Let $X$ and $Y$ be affine definable spaces, $\varphi\co X\to Y$ a morphism of affine definable spaces, and $f\co X\to\Z$
a bounded constructible function. Then
\[
    \int_X f(x)\,d\chi(x) =   \int_Y \left( \int_{\varphi^{-1}(y)}f(x) \, d\chi(x) \right) \,d\chi(y).
\]
\end{theorem}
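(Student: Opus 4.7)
The plan is to reduce Fubini's theorem for constructible functions to a fiberwise identity for Euler characteristics, and then prove that identity via the o-minimal trivialization theorem. Since $f\co X\to\Z$ is a bounded constructible function with finite image, we may write $f = \sum_{k\in\Z} k\,\mathbf{1}_{f^{-1}(k)}$ as a finite $\Z$-linear combination of indicator functions of definable sets. Both sides of the claimed identity are $\Z$-linear in $f$: the outer side by the definition of the integral with respect to $\chi$, and the inner side because on each fiber $\varphi^{-1}(y)$ the same linearity applies, and the $k$'s pull out of the outer integral over $Y$ as well. Hence it suffices to prove the statement for $f = \mathbf{1}_A$ with $A\subseteq X$ a definable subset, where it becomes
\[
    \chi(A) = \int_Y \chi\bigl(\varphi^{-1}(y)\cap A\bigr)\,d\chi(y).
\]
By Remark~\ref{rem:DefSpacesOperations}, $A$ inherits the structure of an affine definable space and $\varphi|_A$ is a morphism of affine definable spaces, so replacing $X$ with $A$ and $\varphi$ with $\varphi|_A$ reduces us to proving $\chi(X) = \int_Y \chi(\varphi^{-1}(y))\,d\chi(y)$.

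For this, I would invoke the o-minimal trivialization theorem (van den Dries, \emph{Tame Topology and O-minimal Structures}, Chapter~9; in the semialgebraic case this is Hardt's theorem). Applied to the morphism $\varphi\co X\to Y$, it produces a finite partition of $Y$ into definable subsets $Y_1,\ldots,Y_r$ such that, for each $i$, there is a definable set $F_i$ and a definable homeomorphism
\[
    \varphi^{-1}(Y_i)\;\cong\;Y_i\times F_i
\]
commuting with the projection to $Y_i$. In particular every fiber $\varphi^{-1}(y)$ with $y\in Y_i$ is definably homeomorphic to $F_i$, so by Theorem~\ref{thrm:ECHomeoInvar} the function $y\mapsto\chi(\varphi^{-1}(y))$ is constant with value $\chi(F_i)$ on $Y_i$ and is therefore constructible on $Y$, which ensures that the right-hand integral is well defined.

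With this partition in hand, both sides compute to the same expression. On the left, finite additivity of $\chi$ gives $\chi(X)=\sum_{i=1}^{r}\chi(\varphi^{-1}(Y_i))=\sum_{i=1}^{r}\chi(Y_i\times F_i)$, and multiplicativity on products, $\chi(Y_i\times F_i)=\chi(Y_i)\chi(F_i)$, follows from the fact that the product of a $d_1$-cell and a $d_2$-cell is a $(d_1+d_2)$-cell with Euler characteristic $(-1)^{d_1+d_2}=(-1)^{d_1}(-1)^{d_2}$, applied to a cell decomposition of $Y_i\times F_i$ compatible with the product structure. On the right, finite additivity of the integral and the constancy of the integrand on each $Y_i$ yield
\[
    \int_Y \chi(\varphi^{-1}(y))\,d\chi(y) \;=\; \sum_{i=1}^{r}\int_{Y_i}\chi(F_i)\,d\chi(y) \;=\; \sum_{i=1}^{r}\chi(F_i)\chi(Y_i),
\]
matching the left-hand side.

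The main obstacle is the appeal to the o-minimal trivialization theorem, which is the substantive input and is genuinely non-trivial in the general o-minimal setting; the rest of the argument is bookkeeping with finite additivity and the product-of-cells identity. A secondary technical point, already handled by the reduction to indicator functions above, is that one must verify compatibility with the level-set stratification $\{f^{-1}(k)\}_{k\in\Z}$; this can alternatively be done in one step by invoking the trivialization theorem for $\varphi$ together with the finite collection of definable subsets $\{f^{-1}(k)\}$, producing a single partition of $Y$ on which all the relevant fiberwise Euler characteristics are constant.
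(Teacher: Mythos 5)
Your proof is correct and follows essentially the same route as the paper, which only sketches the argument: both rest on the Hardt/o-minimal trivialization theorem \cite[Chapter 9, Theorem~(1.7)]{vandenDriesBook} to partition $Y$ so that $\varphi$ becomes a projection off a product over each piece, after which finite additivity and multiplicativity of $\chi$ on cells finish the computation. Your reduction to indicator functions and the closing remark about taking a single trivialization compatible with all the level sets $f^{-1}(k)$ simply make explicit the bookkeeping the paper leaves implicit.
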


When $X$ and $Y$ are definable sets, the proof of Theorem~\ref{thrm:Fubini} is straightforward, because the integral is defined as a finite sum.
The Hardt Trivialisation theorem for definable functions
\cite[Chapter 9, Theorem~(1.7)]{vandenDriesBook} yields a finite cell decomposition of $B$ such that the preimage
of each cell under $\varphi$ is a product of the cell and a definable set on which $\varphi$ coincides with
the projection onto the cell. The theorem extends easily to the case affine definable spaces by considering the images of $X$ and $Y$ under
the corresponding embeddings and applying Remark~\ref{rem:intEChomeoInvar}.


\subsection{Orbifold Euler characteristics}
\label{subsec:BackOrbEuler}

In this section, we briefly recall the construction of the $\Gamma$-Euler characteristic of a
closed orbifold as well as their generalization in \cite{GZEMH-HigherOrbEulerCompactGroup}; see
\cite[Section 2.1 and Definition 4.1]{FarsiSeaGenOrbEuler} for more details.

Recall that an \emph{orbifold} $Q$ (without boundary) can be defined to be the Lie groupoid Morita equivalence class
of a proper \'{e}tale Lie
groupoid, and a choice of groupoid $\mathcal{G}$ from this Morita equivalence class is a \emph{presentation}
of $Q$; see \cite{AdemLeidaRuan}. The \emph{underlying space} of $Q$ is the orbit space $|\mathcal{G}|$
of a presentation $\mathcal{G}$, which does not depend on the choice of $\mathcal{G}$ (up to homeomorphism).
An orbifold is a \emph{global quotient} if it admits a presentation of the form $G\ltimes X$ where $G$ is a
finite group and $X$ is a smooth $G$-manifold without boundary.

Let $\mathcal{G}$ be a proper \'{e}tale Lie groupoid presenting the orbifold
$Q$, and assume that $Q$ is closed, i.e., $\mathcal{G}$ is cocompact.
Let $\Gamma$ be a finitely generated discrete group, considered as a groupoid with a single object,
and let $\HOM(\Gamma,\mathcal{G})$ denote the space of groupoid homomorphisms $\Gamma\to\mathcal{G}$
equipped with the compact-open topology; see Section~\ref{subsec:GamInertia} for a description of this space
in a more general context. The space $\HOM(\Gamma,\mathcal{G})$ inherits the structure
of a smooth $\mathcal{G}$-manifold (possibly with components of different dimensions);
see \cite[Lemma~2.2]{FarsiSeaVectorFld}. Specifically, if $x\in\mathcal{G}_0$ and $V_x$ is a neighborhood of
$x$ in $\mathcal{G}_0$ such that $\mathcal{G}_{|V_x}$ is isomorphic to $\mathcal{G}_x^x\ltimes V_x$,
then a neighborhood in $\HOM(\Gamma,\mathcal{G})$ of the element corresponding to a group homomorphism
$\phi\co\Gamma\to\mathcal{G}_x^x$ is diffeomorphic to the set of fixed points $V_x^{\langle\phi\rangle}$ of
$\phi(\Gamma)$ in $V_x$, and the restriction of the $\mathcal{G}$-action corresponds to the action by conjugation
of the centralizer $C_{\mathcal{G}_x^x}(\phi)$ of $\phi(\Gamma)$ in $\mathcal{G}_x^x$. The translation
groupoid $\mathcal{G}\ltimes\HOM(\Gamma,\mathcal{G})$ is a cocompact orbifold groupoid representing the
\emph{orbifold of $\Gamma$-sectors of $Q$}, denoted $\widetilde{Q}_\Gamma$. When $\Gamma = \Z$,
$\widetilde{Q}_\Z$ is called the \emph{inertia orbifold}, with connected components called \emph{twisted
sectors} \cite[Section~2.5]{AdemLeidaRuan}. The \emph{$\Gamma$-Euler characteristic
$\chi_\Gamma(Q)$ }is defined to be $\chi(\widetilde{Q}_\Gamma) = \chi(|\mathcal{G}\ltimes\HOM(\Gamma,\mathcal{G})|)$,
the Euler characteristic of the orbit space of $\mathcal{G}\ltimes\HOM(\Gamma,\mathcal{G})$.
Many of the various Euler characteristics that have been defined in the literature for orbifolds occur as
$\chi_\Gamma$ for a specific choice $\Gamma$; see \cite[p.~524]{FarsiSeaGenOrbEuler}. In particular, the
string-theoretic orbifold Euler characteristic recalled in Equation~\eqref{eq:StringOrbEuler}, introduced
in \cite{DixonHarveyEA} for global quotients and \cite{Roan} more generally, corresponds to $\chi_\Z$.
For global quotients, the sequence of orbifold Euler characteristics of \cite{BryanFulman} coincides with
$\{ \chi_{\Z^\ell} \}_{\ell\in\N}$, and the generalized orbifold Euler characteristic of
\cite{TamanoiMorava,TamanoiCovering} coincide with the $\chi_\Gamma$ for finitely generated $\Gamma$.
Note that $\HOM(\Z,\mathcal{G})$ is often called the \emph{loop space of $\mathcal{G}$},
$\mathcal{G}\ltimes\HOM(\Z,\mathcal{G})$ is called the \emph{inertia groupoid of $\mathcal{G}$},
and $|\mathcal{G}\ltimes\HOM(\Z,\mathcal{G})|$ is the \emph{inertia space of $\mathcal{G}$}, particularly when
$\mathcal{G}$ is not an orbifold groupoid; we will extend this definition and language to an arbitrary topological groupoid
$\mathcal{G}$ and a finitely presented discrete group $\Gamma$ in Definition~\ref{def:GamInertia} below.
When $\mathcal{G}$ is an orbifold groupoid, the language of twisted sectors or $\Gamma$-sectors is used to
emphasize the fact that $\widetilde{Q}_\Gamma$ is a disjoint union of orbifolds.

\begin{remark}
\label{rem:EulerSatake}
Another important invariant for orbifolds, the \emph{Euler-Satake characteristic
$\chi^{ES}$}, has appeared under various names \cite{SatakeGB,Thurston,LeinsterEulerCharCategory}.
In the simplest case of
a \emph{global quotient orbifold}, presented by $G\ltimes M$ where $G$ is a finite group and $M$
a smooth $G$-manifold, $\chi^{ES}(G\backslash M)$
is given by $\chi(M)/|G|$. A rational number in general, the Euler-Satake characteristic is defined for an
arbitrary closed orbifold by
\begin{equation}
\label{eq:EulerSatakeDef}
    \chi^{ES}(Q)    =   \sum\limits_{\sigma\in\mathcal{T}} (-1)^{\dim\sigma} \frac{1}{|G_\sigma|},
\end{equation}
where $\mathcal{T}$ is a triangulation of the underlying space of $Q$ such that the order of the
isotropy group is constant on the interior of each simplex $\sigma\in\mathcal{T}$ and
$|G_\sigma|$ denotes this order; see \cite[Section~2.2]{FarsiSeaGenOrbEuler}.
Using \cite[Theorem~3.2]{Sea2GBPH}, the (usual) Euler characteristic of the underlying topological
space of $Q$ can be expressed as $\chi_\Z^{ES}(Q)$, and hence for any finitely presented discrete group
$\Gamma$, the Euler characteristic $\chi_\Gamma(Q)$ can be expressed as $\chi_{\Gamma\times\Z}^{ES}(Q)$;
see \cite[Equations~(8) and (9)]{FarsiSeaGenOrbEuler} (see also \cite[Proposition~2-2(2)]{TamanoiCovering}
for the case of a global quotient). Hence, the Euler-Satake characteristic is
in some sense more primitive than the usual Euler characteristic, as all $\Gamma$-Euler characteristics
can be expressed in terms of $\Gamma$-Euler-Satake characteristics, but not conversely. For the present
purpose, however, the Euler-Satake characteristic will be less relevant; see Remark~\ref{rem:EulerSatake2}
below.
\end{remark}

Using the language of integration with respect to the Euler characteristic recalled in Section~\ref{subsec:BackDefin},
Gusein-Zade, Luengo, Melle-Hern\'{a}ndez \cite{GZEMH-HigherOrbEulerCompactGroup} gave the following generalization of
orbifold Euler characteristics to the non-orbifold setting of a $G$-space $A$ where $G$ is a compact Lie group and $A$
is a topological $G$-space that does not necessarily have finite isotropy groups. Using this definition, they generalize the generating series
for higher-order orbifold Euler characteristics of wreath symmetric products given for orbifolds in
\cite{TamanoiMorava,FarsiSeaGenOrbEuler}.
The hypotheses on $A$ in \cite{GZEMH-HigherOrbEulerCompactGroup} are deliberately loose; they assume that
$A$ is ``good enough," e.g., a quasi-projective variety, with finitely many orbit types,
each having a well-defined Euler characteristic. For our purposes,
it will be sufficient to assume that $A$ is a semialgebraic $G$-set, though this is purely for convenience.
Note that \cite{GZEMH-HigherOrbEulerCompactGroup} considered right $G$-actions, while we state the definition
here in terms of left $G$-actions.

\begin{definition}[({Orbifold Euler characteristics for semialgebraic $G$-sets,
\cite[Definitions~2.1 and 2.2]{GZEMH-HigherOrbEulerCompactGroup}})]
\label{def:GZEC}
Let $G$ be a compact Lie group and $A$ a semialgebraic $G$-set.
For each $g\in G$, let $A^{\langle g\rangle}$ denote the points fixed by $g$, let
$C_G(g)$ denote the centralizer of $g$ in $G$, and let $[g]_G$ denote the conjugacy class of $g$ in $G$.
Let $\Ad_G$ denote the adjoint action of $G$ on itself so that $\Ad_G\!\backslash G$ is the space of conjugacy classes
in $G$. The \emph{orbifold Euler characteristic of $(A,G)$} is given by
\begin{equation}
\label{eq:EulerCharGZDef}
    \chi^{(1)}(A,G) =   \int_{\Ad_G\!\backslash G} \chi\big(C_G(g)\backslash A^{\langle g\rangle}\big)
        \, d\chi([g]_G).
\end{equation}
If $\ell\in\N$, the \emph{orbifold Euler characteristic of order $\ell$ of $(A,G)$} is given by
\begin{equation}
\label{eq:HigherEulerCharGZDef}
    \chi^{(\ell)}(A,G) =   \int_{\Ad_G\!\backslash G} \chi^{(\ell-1)}\big(A^{\langle g\rangle},C_G(g)\big)
        \, d\chi([g]_G),
\end{equation}
with $\chi^{(0)}(A,G) = \chi(G\backslash A)$.
\end{definition}

Note that the homeomorphism type of $A^{\langle g\rangle}$ as a $C_G(g)$-space depends only on the conjugacy
class of $g$ so that the integrands are well-defined.
Note further that when $G$ acts on $A$ with finite isotropy groups so that $G\ltimes A$ presents an orbifold $Q$,
$\chi^{(\ell)}(A,G)$ coincides with $\chi_{\Z^\ell}(Q)$ defined in Section~\ref{subsec:BackOrbEuler};
see \cite[Definition~1.1]{GZEMH-HigherOrbEulerCompactGroup} and \cite[p.524]{FarsiSeaGenOrbEuler}. In particular,
when $G$ is finite, $\chi^{(1)}(A,G)$ reduces to $\chi^{orb}(A, G)$ of Equation~\eqref{eq:StringOrbEuler}.

\begin{remark}
\label{rem:OrbECNotation}
In \cite{GZEMH-HigherOrbEulerCompactGroup}, $\chi^{(1)}(A,G)$ is denoted $\chi^{orb}(A, G)$.
We will use the notation $\chi^{(1)}(A,G)$ to avoid confusion with other Euler characteristics.
\end{remark}


\section{$\Gamma$-Euler characteristics for groupoids}
\label{sec:OrbitDefinGpoids}

In this section, we define the $\Gamma$-Euler characteristic of a groupoid
$\mathcal{G}$ where $\Gamma$ is a finitely presented discrete group; see Definition~\ref{def:ECGroupoid}.
We will see in Section~\ref{sec:TranslationProperLie} that these generalize the orbifold Euler characteristics
of \cite{GZEMH-HigherOrbEulerCompactGroup} recalled in Definition~\ref{def:GZEC} as well as the other $\Gamma$-Euler
characteristics and orbifold Euler characteristics recalled above. First, we describe the
groupoids for which these $\Gamma$-Euler characteristics are defined.

For the remainder of this paper, we fix an o-minimal structure on $\R$ that contains the semialgebraic sets, and
$\Gamma$ will always denote a finitely presented group equipped with the discrete topology.


\subsection{Orbit space definable groupoids}
\label{subsec:OrbitDefin}

We begin with the following.

\begin{definition}[(Orbit space definable groupoids)]
\label{def:OrbDefinGpoid}
An \emph{orbit space definable groupoid} (with respect to a fixed o-minimal structure) is a topological groupoid $\mathcal{G}$
and an embedding $\iota_{|\mathcal{G}|}\co|\mathcal{G}|\to\R^n$ of the orbit space $|\mathcal{G}|$ such that
\begin{enumerate}
\item[(i)]      The pair $(|\mathcal{G}|, \iota_{|\mathcal{G}|})$ is an affine definable space,
\item[(ii)]     For each $x\in\mathcal{G}_0$, the isotropy group $\mathcal{G}_x^x$ is a compact Lie group, and
\item[(iii)]    The equivalence relation $x\sim y$ if and only if $\mathcal{G}_x^x$ and $\mathcal{G}_y^y$ are isomorphic
                is a finite partition of $\mathcal{G}_0$ inducing a finite partition of the orbit space $|\mathcal{G}|$
                into definable subsets.
\end{enumerate}
We will often refer to an orbit space definable groupoid as $\mathcal{G}$ and assume without clarification that $\iota_{|\mathcal{G}|}$
denotes the corresponding embedding. Following \cite{PflaumOrbispace}, we refer to the partition in (iii) as the partition by
\emph{weak orbit types}.
\end{definition}

Note that we do not require that the space of objects $\mathcal{G}_0$ and arrows $\mathcal{G}_1$ of an orbit space definable
groupoid to be definable sets or spaces. For instance, suppose $M$ is a topological space that does not admit the structure of an affine
definable space. Then the pair groupoid $M\times M\to M$ has orbit space a point and trivial isotropy groups so that it is an orbit
space definable groupoid. We will on occasion consider groupoids whose spaces of objects and arrows have definable structures compatible
with that of the quotient; however, an orbit space definable groupoid structure will be sufficient to define the $\Gamma$-Euler characteristics
we introduce in Definition~\ref{def:ECGroupoid} below. It is frequently the case that the o-minimal structure of semialgebraic sets will be sufficient.
However, larger o-minimal structures allow the consideration of sets that are described
more analytically, e.g., using exponential functions, see \cite[Section~2.5]{vandenDriesMillerGeomCat},
and there is no reason to preclude this flexibility.

If $\mathcal{G}$ and $\mathcal{H}$ are orbit space definable groupoids, then the product groupoid $\mathcal{G}\times\mathcal{H}$ is easily seen
to admit the structure of an orbit space definable groupoid. Specifically, $(x,y),(x^\prime,y^\prime)\in\mathcal{G}_0\times\mathcal{H}_0$ are in
the same orbit in $\mathcal{G}\times\mathcal{H}$
if and only if there is an arrow $(g,h)\in\mathcal{G}_1\times\mathcal{G}_1$ such that $s(g,h) = (x,y)$ and $t(g,h) = (x^\prime,y^\prime)$, which
is equivalent to $x$ and $y$ being in the same orbit in $\mathcal{G}$ and $x^\prime$ and $y^\prime$ being in the same orbit in $\mathcal{H}$.
Therefore, $|\mathcal{G}\times\mathcal{H}| = |\mathcal{G}|\times|\mathcal{H}|$ so that $\iota_{|\mathcal{G}|}\times\iota_{|\mathcal{H}|}$ is an embedding
of $|\mathcal{G}\times\mathcal{H}|$ with respect to which $|\mathcal{G}\times\mathcal{H}|$ is an affine definable space. Similarly, the isotropy
group of $(x,y)\in\mathcal{G}_0\times\mathcal{H}_0$ is $\mathcal{G}_x^x\times\mathcal{H}_y^y$ so that the partition of
$|\mathcal{G}\times\mathcal{H}|$ into $\sim$-classes as in condition (iii) of the definition is given by products of the corresponding equivalence
classes in $|\mathcal{G}|$ and $|\mathcal{H}|$.

If $\mathcal{G}$ is an orbit space definable groupoid and $X\subset|\mathcal{G}|$ is a definable subset, then letting
$\pi\co\mathcal{G}_0\to|\mathcal{G}|$ denote the orbit map, the restriction $\mathcal{G}_{|\pi^{-1}(X)}$ of $\mathcal{G}$ to the preimage
of $X$ is as well orbit space definable. Clearly, $|\mathcal{G}_{|\pi^{-1}(X)}| = X$ is an affine definable space via the restriction
of $\iota_{|\mathcal{G}|}$ to $X$, the isotropy groups are the same with respect to both $\mathcal{G}$ and $\mathcal{G}_{|\pi^{-1}(X)}$,
and the $\sim$-classes in $|\mathcal{G}_{|\pi^{-1}(X)}|$ are the intersections with $X$ of those in $|\mathcal{G}|$.

We summarize these observations in the following.

\begin{lemma}
\label{lem:OrbDefinGpoidConstructions}
Let $\mathcal{G}$ and $\mathcal{H}$ be orbit space definable groupoids with embeddings $\iota_{|\mathcal{G}|}$ and $\iota_{|\mathcal{H}|}$.
\begin{enumerate}
\item[(i)]      The groupoid $\mathcal{G}\times\mathcal{H}$ admits the structure of an orbit space definable groupoid with embedding
                $\iota_{|\mathcal{G}\times\mathcal{H}|} = \iota_{|\mathcal{G}|}\times\iota_{|\mathcal{H}|}$.
\item[(ii)]     If $X\subset|\mathcal{G}|$, then $\mathcal{G}_{|\pi^{-1}(X)}$ admits the structure of an orbit space definable groupoid
                with embedding given by the restriction of $\iota_{|\mathcal{G}|}$ to $X$.
\end{enumerate}
\end{lemma}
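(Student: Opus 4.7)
The plan is a direct verification against the three conditions of Definition~\ref{def:OrbDefinGpoid}, following the sketches given in the two paragraphs preceding the statement; both parts reduce to routine bookkeeping once one sets up the right identifications, so I would organize the proof as two short arguments.

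For part (i), I would first identify $|\mathcal{G}\times\mathcal{H}|$ with $|\mathcal{G}|\times|\mathcal{H}|$ as topological spaces. The arrows of $\mathcal{G}\times\mathcal{H}$ are pairs $(g,h)\in\mathcal{G}_1\times\mathcal{H}_1$ with source and target computed componentwise, so the orbit equivalence relation on $\mathcal{G}_0\times\mathcal{H}_0$ is the product of the orbit equivalence relations on $\mathcal{G}_0$ and $\mathcal{H}_0$. The induced continuous bijection $|\mathcal{G}\times\mathcal{H}|\to|\mathcal{G}|\times|\mathcal{H}|$ is in fact a homeomorphism because the orbit maps are open (a standing assumption from Section~\ref{subsec:BackGpoids}), so the product orbit map is an open quotient. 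Under this identification, $\iota_{|\mathcal{G}|}\times\iota_{|\mathcal{H}|}$ is a topological embedding into $\R^{n+m}$ whose image is the product of definable sets, hence definable (Remark~\ref{rem:DefSpacesOperations}), verifying (i). Condition (ii) is immediate since $(\mathcal{G}\times\mathcal{H})_{(x,y)}^{(x,y)}=\mathcal{G}_x^x\times\mathcal{H}_y^y$ is a product of compact Lie groups. For condition (iii), two pairs $(x,y),(x',y')$ have isomorphic isotropy in the product exactly when both factors pair up to isomorphic isotropy, so the weak orbit type partition of $|\mathcal{G}\times\mathcal{H}|$ is the product partition; it is finite, and each piece is the product of definable pieces and hence definable.

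For part (ii), let $X\subseteq|\mathcal{G}|$ be definable and consider the restriction $\mathcal{G}_{|\pi^{-1}(X)}$. Since $\pi^{-1}(X)$ is $\mathcal{G}$-saturated, the orbit space of the restriction is precisely $X$, and $\iota_{|\mathcal{G}|}$ restricted to $X$ is a topological embedding with definable image, giving condition (i). The isotropy groups are unchanged by restriction, so (ii) follows from the same property for $\mathcal{G}$. For (iii), the weak orbit type pieces for $\mathcal{G}_{|\pi^{-1}(X)}$ are obtained by intersecting those of $\mathcal{G}$ with $X$; there are finitely many, and each is a definable subset of $X$ as an intersection of definable sets (Remark~\ref{rem:DefSpacesOperations}).

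The only point that requires a moment of thought, rather than a genuine obstacle, is the homeomorphism $|\mathcal{G}\times\mathcal{H}|\cong|\mathcal{G}|\times|\mathcal{H}|$ in part (i): it needs the openness of the orbit maps to upgrade the continuous bijection to a homeomorphism. Everything else is a direct consequence of the closure of definable sets under finite products, intersections, and subsets, together with the explicit descriptions of isotropy groups in the product and restriction constructions.
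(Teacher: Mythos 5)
Your proof is correct and follows essentially the same route as the paper, which simply records the observations of the two paragraphs preceding the lemma; you even supply the one detail the paper leaves implicit, namely that openness of the orbit maps is what upgrades the continuous bijection $|\mathcal{G}\times\mathcal{H}|\to|\mathcal{G}|\times|\mathcal{H}|$ to a homeomorphism. One tiny caveat: isomorphism of the products $\mathcal{G}_x^x\times\mathcal{H}_y^y\cong\mathcal{G}_{x'}^{x'}\times\mathcal{H}_{y'}^{y'}$ need not force isomorphism of the individual factors, so the weak orbit type partition of $|\mathcal{G}\times\mathcal{H}|$ is a priori only a coarsening of the product partition --- but a coarsening of a finite partition into definable sets is again a finite partition into definable sets, so condition (iii) holds regardless.
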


\begin{remark}
\label{rem:DefinableMorita}
It is natural to consider \emph{definable homomorphisms} of orbit space definable groupoids to be homomorphisms of the underlying topological
groupoids such that the induced map on orbit spaces is a morphism of affine definable spaces. In particular, the notion of a \emph{definable
essential equivalence}, a definable homomorphism that is as well an essential equivalence of the underlying topological groupoids, yields a
notion of \emph{definable Morita equivalence} between orbit space definable groupoids. We will not need these notions, as our interest
is in invariants of the underlying topological groupoids. Hence, we view an orbit space definable structure as a device used to define
and compute Euler characteristics, which will not depend on the choice of such structure by Theorem~\ref{thrm:ECHomeoInvar} and
Remark~\ref{rem:intEChomeoInvar}; see Section~\ref{subsec:GpoidECDef}.
\end{remark}

We are primarily interested in the special case of cocompact proper Lie groupoids, which we now show always admit the structure of an orbit space
definable groupoid. In the proof below, we define this structure using a triangulation of the orbit space whose existence was demonstrated in
\cite{PPTOrbitSpace}. Note that the class of orbit space definable groupoids is much larger than that of cocompact proper Lie groupoids.

\begin{proposition}[(Cocompact proper Lie groupoids are orbit space definable)]
\label{prop:LieGpdOrbitDefin}
Let $\mathcal{G}$ be a cocompact proper Lie groupoid. Then there is an embedding $\iota_{|\mathcal{G}|}\co|\mathcal{G}|\to\R^n$ with respect to which
$\mathcal{G}$ is orbit space definable.
\end{proposition}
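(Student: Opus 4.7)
The plan is to build the embedding $\iota_{|\mathcal{G}|}$ from the triangulation theorem for orbit spaces of proper Lie groupoids established in \cite{PPTOrbitSpace}. Because $\mathcal{G}$ is cocompact, $|\mathcal{G}|$ is compact Hausdorff, so the triangulation produces a \emph{finite} simplicial complex $K$ together with a homeomorphism $|\mathcal{G}| \to |K|$. Identifying $|K|$ with its standard geometric realization in $\R^n$ (with $n$ one less than the number of vertices of $K$, say) yields a topological embedding $\iota_{|\mathcal{G}|}\co|\mathcal{G}|\to\R^n$ whose image is a finite union of affine simplices, and hence a semialgebraic set. Since the fixed o-minimal structure contains the semialgebraic sets, this verifies condition (i) of Definition~\ref{def:OrbDefinGpoid}.

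Condition (ii) is automatic from the assumption that $\mathcal{G}$ is a proper Lie groupoid: for every $x\in\mathcal{G}_0$, the isotropy group $\mathcal{G}_x^x=(s,t)^{-1}(x,x)$ is a closed, embedded Lie subgroup of $\mathcal{G}_1$, and properness of $(s,t)$ forces it to be compact. For condition (iii), I would use the fact, standard for cocompact proper Lie groupoids (deducible from the slice theorem combined with compactness of $|\mathcal{G}|$), that there are only finitely many orbit types; since the weak orbit type partition coarsens the orbit type partition, there are only finitely many weak orbit type classes. Listing them as $W_1,\dots,W_N$, it remains to see that each $\iota_{|\mathcal{G}|}(W_j)\subset\R^n$ is a definable subset.

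To see the last point, I would appeal to the fact that the triangulation of \cite{PPTOrbitSpace} can be chosen compatibly with the orbit type stratification of $|\mathcal{G}|$, so that each orbit type stratum is a finite union of open simplices of $K$. Each weak orbit type class $W_j$ is then a finite union of orbit type strata, hence a finite union of open simplices, hence semialgebraic in $\R^n$. Condition (iii) follows from the closure of the boolean algebra of definable sets under finite unions (Remark~\ref{rem:DefSpacesOperations}).

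The main obstacle is the compatibility step: one must ensure that the triangulation of \cite{PPTOrbitSpace} respects the orbit type stratification in the precise sense that each stratum is a finite union of open simplices. This should be a feature of their construction, which proceeds by stratifying $|\mathcal{G}|$ by orbit types first and then triangulating compatibly, but this is the point at which the argument has to quote \cite{PPTOrbitSpace} precisely rather than use generalities about proper Lie groupoids.
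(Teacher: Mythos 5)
Your proposal is correct and follows essentially the same route as the paper: both obtain the embedding from the finite triangulation of the compact orbit space furnished by \cite{PPTOrbitSpace} (realized as a semialgebraic subcomplex of Euclidean space), both get compactness of the isotropy groups directly from properness of $(s,t)$, and both verify condition (iii) by observing that the weak orbit type partition is a finite, triangulation-compatible coarsening of the (normal) orbit type stratification. The compatibility step you flag is exactly the point the paper handles by citing \cite[Corollary~7.2 and Theorem~5.7]{PPTOrbitSpace}, noting that each normal orbit type is open and closed in its weak orbit type so the two stratifications share connected components.
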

\begin{proof}
Let $x\in\mathcal{G}_0$, and then it is well known that $\mathcal{G}_x^x$ is a Lie group;
see \cite[Theorem~5.4]{MoerdijkMrcun}. As $\mathcal{G}_x^x = (s,t)^{-1}(x)$, it is compact by
the properness of $\mathcal{G}$. By \cite[Corollary~7.2]{PPTOrbitSpace}, $|\mathcal{G}|$ admits a
triangulation that is compatible with the stratification by normal orbit types; see
\cite[Definition~5.6(ii)]{PPTOrbitSpace} for the definition. That is, the closure
of each stratum corresponds to a simplicial subcomplex, and the local compactness of $|\mathcal{G}|$ implies that the triangulating simplicial
complex is locally finite \cite[Proposition~7.3]{PflaumOrbispace}, \cite[Lemma~2.6]{MunkresAlg}.
As $\mathcal{G}$ is cocompact so that $|\mathcal{G}|$ is actually compact, the simplicial complex is finite,
and therefore can be embedded into the real finite-dimensional vector space $\R^n$ spanned by its vertices in
such a way that the image of each simplex is a semialgebraic set \cite[page~956]{DelfsKnebuschIntro}. The composition
of the homeomorphism of $|\mathcal{G}|$ with this finite simplicial complex and the semialgebraic embedding
of the complex yields an embedding $\iota_{|\mathcal{G}|}\co|\mathcal{G}|\to\R^n$ whose image is semialgebraic
and hence definable, making $|\mathcal{G}|$ into an affine definable space.

By \cite[Theorem~5.7]{PPTOrbitSpace}, $|\mathcal{G}|$ is stratified by weak orbit types, whose definition \cite[Definition~5.6(i)]{PPTOrbitSpace}
coincides with the equivalence relation in Definition~\ref{def:OrbDefinGpoid}~(iii). Each normal orbit type is open and closed
in its weak orbit type so that the connected components of weak orbit types are exactly the connected components of
normal orbit types; hence, both stratifications are compatible with the triangulation above. Moreover, these connected
components form a decomposition of $|\mathcal{G}|$ in the sense of \cite[Definition~2.1]{PPTOrbitSpace}, in particular
implying that this partition is locally finite. As $|\mathcal{G}|$ is compact, it follows that the decomposition is in fact
finite so that Definition~\ref{def:OrbDefinGpoid}~(iii) is satisfied.
\end{proof}

An embedding $\iota_{|\mathcal{G}|}$ making $\mathcal{G}$ into an orbit space definable groupoid as in Proposition~\ref{prop:LieGpdOrbitDefin}
is usually far from unique, so the orbit space $|\mathcal{G}|$ may admit many inequivalent structures as an affine definable space.
As explained in Remark~\ref{rem:DefinableMorita}, our interest is in invariants that will not depend on this choice.
The hypothesis of cocompactness in Proposition~\ref{prop:LieGpdOrbitDefin} is sufficient but not necessary, which can
be seen by considering $\mathcal{G}_{|\pi^{-1}(U)}$ where $\mathcal{G}$ is cocompact, $U$ is an open definable subset
of $|\mathcal{G}|$, and $\pi\co\mathcal{G}_0\to|\mathcal{G}|$ again denotes the orbit map.

It is natural to consider topological groupoids $\mathcal{G}$ such that the spaces of objects and arrows are affine definable spaces
and the structure maps are morphisms of definable spaces. Using the results of \cite[Section~10.2]{vandenDriesBook} on the existence of
definably proper quotients of definably proper equivalence relations recalled above, we now demonstrate that such a groupoid admits a
definable orbit space assuming it is proper and the source map is proper.
Note that as the inverse map $i\co\mathcal{G}_1\to\mathcal{G}_1$ is its own inverse function and hence
a homeomorphism, if $s$ is proper then $t = s\circ i$ is proper as well.

\begin{proposition}
\label{prop:DefinGpdOrbitDefin}
Let $\mathcal{G}$ be a proper topological groupoid such that the source map $s$ is proper.
Let $\iota_1\co\mathcal{G}_1\to\R^n$ be a topological embedding such that
$(\mathcal{G}_1, \iota_1)$ is an affine definable space, assume $u(\mathcal{G}_0)\subset\mathcal{G}_1$ is a definable
subset, and let $\iota_0 = \iota_1\circ u\co\mathcal{G}_0\to\R^n$ denote the induced topological embedding. Assume
the structure maps $s,t,u,i,m$ are morphisms of affine
definable spaces. Then there is an embedding $\iota_{|\mathcal{G}|}$ of $|\mathcal{G}|$ into
Euclidean space with respect to which $\mathcal{G}$ satisfies Definition~\ref{def:OrbDefinGpoid}(i) and (ii). In addition,
if $\mathcal{G}$ satisfies Definition~\ref{def:OrbDefinGpoid}(iii), then $\mathcal{G}$ is orbit space definable.
\end{proposition}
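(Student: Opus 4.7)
The plan is to build the affine definable structure on $|\mathcal{G}|$ by applying the theory of definably proper quotients from \cite[Chapter 10]{vandenDriesBook}, as recalled before Remark~\ref{rem:QuotUnique}, and to verify the isotropy condition using the properness of $s$ together with the assumed definable structure on $\mathcal{G}_1$. Note that the hypotheses already make $\mathcal{G}_0$ into an affine definable space via $\iota_0$, since $\iota_0(\mathcal{G}_0) = \iota_1\bigl(u(\mathcal{G}_0)\bigr)$ is the image of a definable set under the embedding $\iota_1$ and hence definable.

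For part (i), I consider the equivalence relation $E \subset \mathcal{G}_0 \times \mathcal{G}_0$ defined by $(x,y) \in E$ if and only if there exists $g \in \mathcal{G}_1$ with $s(g) = x$ and $t(g) = y$; thus $E = (s,t)(\mathcal{G}_1)$. Since $(s,t)$ is a morphism of affine definable spaces, $E$ is a definable subset of $\mathcal{G}_0 \times \mathcal{G}_0$ by Remark~\ref{rem:DefSpacesOperations}. The key technical point is to verify that the projection $\pr_1\co E \to \mathcal{G}_0$ onto the first factor is definably proper. For any compact definable subset $K \subseteq \mathcal{G}_0$, the preimage $s^{-1}(K)$ is compact by the properness hypothesis on $s$; then $\pr_1^{-1}(K) = (s,t)\bigl(s^{-1}(K)\bigr)$ is the continuous image of a compact set, hence compact. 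Combined with continuity and definability of $\pr_1$, this shows $\pr_1$ is definably proper, so $E$ is a definably proper equivalence relation. The existence result for definably proper quotients then yields an embedding $\iota_{|\mathcal{G}|}\co|\mathcal{G}| \to \R^k$ making $(|\mathcal{G}|, \iota_{|\mathcal{G}|})$ an affine definable space by Remark~\ref{rem:QuotUnique}.

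For part (ii), fix $x \in \mathcal{G}_0$. The isotropy group $\mathcal{G}_x^x = (s,t)^{-1}(x,x)$ is a closed subset of $s^{-1}(x)$, which is compact by the properness of $s$, so $\mathcal{G}_x^x$ is a compact topological group. Moreover, as the preimage of the definable singleton $\{(x,x)\}$ under the morphism of affine definable spaces $(s,t)$, it is a definable subset of $\mathcal{G}_1$ by Remark~\ref{rem:DefSpacesOperations}, and the restrictions of $m$ and $i$ make it into a compact definable topological group. The Lie group structure then follows from Pillay's theorem mentioned at the end of Section~\ref{subsec:BackDefin}; the resulting Lie topology agrees with the given subspace topology because the group operations are already continuous. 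Once (i) and (ii) are in hand, the final claim is immediate: if (iii) additionally holds, then $\mathcal{G}$ satisfies all three conditions of Definition~\ref{def:OrbDefinGpoid}.

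The main obstacle is establishing that $\pr_1\co E \to \mathcal{G}_0$ is definably proper, which is precisely where the stronger assumption that $s$ (and not merely $(s,t)$) is proper is essential: the fibres of $\pr_1$ are entire orbits, and their compactness requires proper source rather than just proper anchor $(s,t)$.
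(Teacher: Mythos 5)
Your proof is correct and follows essentially the same route as the paper: you realize the orbit equivalence relation as the definable image of $(s,t)$, verify that $\pr_1\co E\to\mathcal{G}_0$ is definably proper via $\pr_1^{-1}(K) = (s,t)\bigl(s^{-1}(K)\bigr)$ and the properness of $s$, and then invoke the existence of definably proper quotients from \cite[Chapter 10]{vandenDriesBook} together with Remark~\ref{rem:QuotUnique}. The only divergence is that for condition (ii) the paper stops at compactness of $\mathcal{G}_x^x$, whereas you additionally invoke Pillay's theorem for the Lie structure; that is a reasonable completion, though your one-line claim that the Pillay topology coincides with the subspace topology deserves the standard homogeneity argument (the two topologies agree on a large definable subset and are both translation-invariant).
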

\begin{proof}
Let $\widehat{\mathcal{G}_i}$ denote $\iota_i(\mathcal{G}_i)$ for $i = 0, 1$, and let
$\hat{s} = \iota_0\circ s\circ\iota_1^{-1}\co\widehat{\mathcal{G}_1}\to\widehat{\mathcal{G}_0}$,
$\hat{t} = \iota_0\circ t\circ\iota_1^{-1}\co\widehat{\mathcal{G}_1}\to\widehat{\mathcal{G}_0}$, etc.,
so that $\widehat{\mathcal{G}_0}$ and $\widehat{\mathcal{G}_1}$ along with the structure maps
$\hat{s}$, $\hat{t}$, $\hat{u}$, $\hat{i}$, and $\hat{m}$ is a proper topological groupoid embedded in $\R^n$ as definable sets
such that the structure maps are definable functions.
Let $E = \{(x,y): x\in\widehat{\mathcal{G}_0}, y\in\widehat{\mathcal{G}}x\}$ denote the equivalence relation corresponding to the
partition of $\widehat{\mathcal{G}_0}$ into orbits. Then $E$ is the image of definable function
$(\hat{s},\hat{t})\co\widehat{\mathcal{G}_1}\to\widehat{\mathcal{G}_0}\times\widehat{\mathcal{G}_0}$ and hence definable.
Let $\pr_1\co E\to\widehat{\mathcal{G}_0}$ denote the projection onto the first factor and let $K\subseteq\widehat{\mathcal{G}_0}$
be definable and compact. Then $\pr_1^{-1}(K)$ is given by $(\hat{s},\hat{t})\big(\hat{s}^{-1}(K)\big)$. As $\hat{s}$ is proper so that
$\hat{s}^{-1}(K)$ is definable and compact, the image $(\hat{s},\hat{t})\big(\hat{s}^{-1}(K)\big) = \pr_1^{-1}(K)$ of $\hat{s}^{-1}(K)$
under $(\hat{s},\hat{t})$ is definable and compact so that $\pr_1$ is definably proper. Then by \cite[Chapter 10, Theorem~(2.15)]{vandenDriesBook},
there is a definably proper quotient $\pi\co\widehat{\mathcal{G}_0}\to B\subset\R^k$, and the embedding
$\iota_0\co\mathcal{G}_0\to\widehat{\mathcal{G}_0}\subset\R^n$ induces a homeomorphism
$\iota_{|\mathcal{G}|}=E\backslash\iota_0\co|\mathcal{G}|\to B\subset\R^k$;
see Remark~\ref{rem:QuotUnique}. Hence $(|\mathcal{G}|,\iota_{|\mathcal{G}|})$ is an affine definable space.
That $(\hat{s},\hat{t})$ is proper and definable, hence definably proper, implies that $(s,t)^{-1}(x) = \mathcal{G}_x^x$
is compact for each $x\in\mathcal{G}_0$.
\end{proof}

An important case of Proposition~\ref{prop:DefinGpdOrbitDefin} is that of the translation groupoid
$G\ltimes A$ where $G$ is a compact Lie group identified with a compact linear algebraic group so that $G\subset\R^{m^2}$
and $A\subset\R^n$ is a semialgebraic $G$-set; see
Section~\ref{subsec:BackDefin}. In this case, using the identity functions as embeddings of
$A$ and $G\times A\subset\R^{m^2+n}$, the structure maps are obviously semialgebraic, and
$G\ltimes A$ is proper. For any compact $K\subseteq A$, the set $s_{G\ltimes A}^{-1}(K) = G\times K$
is compact so that $s_{G\ltimes A}$ is proper. As well, a semialgebraic $G$-set $A$ has finitely many orbit types,
each a semialgebraic set, see \cite[Theorem~2.6 and p.~635]{ChoiParkSuhSemialgSlice}, yielding the
following.

\begin{corollary}[(Semialgebraic $G$-sets are orbit space definable)]
\label{cor:DefinGpdOrbitDefin}
Let $G$ be a compact Lie group and $A$ a semialgebraic $G$-set. Then there is an embedding of $G\ltimes A$
into $\R^n$ with respect to which $G\ltimes A$ is an orbit space definable groupoid.
\end{corollary}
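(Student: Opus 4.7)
The plan is to verify that the translation groupoid $G\ltimes A$ satisfies all the hypotheses of Proposition~\ref{prop:DefinGpdOrbitDefin}, together with the finiteness of weak orbit types required by Definition~\ref{def:OrbDefinGpoid}(iii). Since $G$ is a compact Lie group, I will first invoke the discussion in Section~\ref{subsec:BackDefin} to identify $G$ with a compact linear algebraic subgroup of $\GL(m,\R)\subset\R^{m^2}$, so that $G$ is a semialgebraic group with respect to the identity embedding; the multiplication and inverse of $G$ are then semialgebraic maps. Together with $A\subset\R^n$, the identity embeddings give $\iota_0 \co A \hookrightarrow \R^n$ and $\iota_1 \co G\times A \hookrightarrow \R^{m^2+n}$, each with semialgebraic image, so $(\mathcal{G}_0,\iota_0)$ and $(\mathcal{G}_1,\iota_1)$ are affine definable spaces.

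Next, I would check the hypotheses of Proposition~\ref{prop:DefinGpdOrbitDefin} in sequence. The unit subset $u(\mathcal{G}_0)=\{1\}\times A\subset G\times A$ is cut out by polynomial equalities from the identity matrix coordinates, hence is semialgebraic and thus definable. The structure maps $s(g,x)=x$, $t(g,x)=gx$, $u(x)=(1,x)$, $i(g,x)=(g^{-1},gx)$, and $m\bigl((h,gx),(g,x)\bigr)=(hg,x)$ are either projections, inclusions, matrix multiplications, matrix inversions, or the given semialgebraic action map; in each case the graph is semialgebraic, so these are morphisms of affine definable spaces. For properness of the groupoid, note that $(s,t)\co G\times A\to A\times A$ sends $(g,x)\mapsto(x,gx)$, and for any compact $K_1\times K_2\subset A\times A$ the preimage is contained in $G\times K_1$, which is compact since $G$ is compact; hence $(s,t)$ is proper. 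Similarly, $s^{-1}(K)=G\times K$ is compact for any compact $K\subseteq A$, so $s$ is proper. Proposition~\ref{prop:DefinGpdOrbitDefin} then supplies the embedding $\iota_{|G\ltimes A|}$ giving an affine definable structure on the orbit space $G\backslash A$ and confirms that all isotropy groups are compact Lie groups, establishing (i) and (ii) of Definition~\ref{def:OrbDefinGpoid}.

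Finally, for (iii), I will invoke the fact cited in the paragraph preceding the corollary: by \cite[Theorem~2.6 and p.~635]{ChoiParkSuhSemialgSlice}, a semialgebraic $G$-set $A$ has only finitely many orbit types, and each orbit-type stratum is a semialgebraic subset of $A$. Since the partition by weak orbit types (those classes determined by isomorphism class of isotropy group) is coarser than the partition by orbit types, it too is finite, and each weak orbit-type stratum in $\mathcal{G}_0$ is a union of finitely many semialgebraic subsets and hence semialgebraic. Applying Lemma~\ref{lem:SemialgEquivarFunc} to the inclusion of each such stratum and passing to the quotient by $G$ shows that the induced partition of $|G\ltimes A|$ into weak orbit types consists of finitely many definable subsets, giving (iii).

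The argument is essentially a verification, and I do not expect any serious obstacle; the only nontrivial ingredient is the appeal to the finiteness of orbit types for semialgebraic $G$-sets, which is provided by the cited results of Choi--Park--Suh. Everything else reduces to the observation that polynomial and matrix operations, together with the given semialgebraic action, are semialgebraic, and that compactness of $G$ forces both the groupoid and its source map to be proper.
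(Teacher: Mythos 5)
Your proposal is correct and follows essentially the same route as the paper: identify $G$ with a compact linear algebraic group, use the identity embeddings so that the structure maps are semialgebraic, verify properness of $(s,t)$ and of $s$, apply Proposition~\ref{prop:DefinGpdOrbitDefin}, and invoke the Choi--Park--Suh finiteness of (semialgebraic) orbit types for condition (iii). You supply a bit more detail than the paper (the definability of $u(\mathcal{G}_0)$ and the descent of the weak orbit type strata to the quotient), but the argument is the same.
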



\subsection{$\Gamma$-Euler characteristics for orbit space definable groupoids}
\label{subsec:GpoidECDef}

In this section, we define the $\Gamma$-Euler characteristics for orbit space definable groupoids.
Recall that we fix an o-minimal structure on $\R$ that contains the semialgebraic sets with respect to which
definable sets and spaces are defined.
Let $G$ be a compact Lie group and let $\Gamma$ be a finitely presented discrete group.
We will frequently make use of the following.

\begin{notation}
\label{not:FinitePres}
Assume $\Gamma$ has finite presentation $\Gamma = \langle\gamma_1,\ldots,\gamma_\ell\mid R_1,\ldots,R_k\rangle$.
Then $\HOM(\Gamma, G)$ can be identified with the subset of $G^\ell$ satisfying the relations $R_i$
via the map $\HOM(\Gamma, G)\ni\phi\mapsto(\phi(\gamma_1),\ldots,\phi(\gamma_\ell))\in G^\ell$.
With respect to this identification, the action of $G$ on $\HOM(\Gamma, G)$ by conjugation
corresponds to the action of $G$ on $G^\ell$ by simultaneous conjugation.
\end{notation}

If $(G, \lambda)$ is an affine topological group, then it is clear from the above description that
$\HOM(\Gamma,G)$ is a definable subset of the affine topological space $(G^\ell, \lambda^\ell)$.
The action of $G$ on $G^\ell$ by simultaneous conjugation makes $G^\ell$ into an affine definable $G$-space,
and hence $\HOM(\Gamma, G)$ is an affine definable $G$-space with respect to the $G$-action by conjugation
and the restriction of the embedding $\lambda^\ell$. The quotient space $G\backslash\!\HOM(\Gamma, G)$
admits an embedding with respect to which it is homeomorphic to a definably proper quotient by
\cite[Chapter 10, Corollary~(2.18)]{vandenDriesBook}; see Remark~\ref{rem:QuotUnique}.

With the above notation established, we have the following, which is the primary definition introduced in this section.

\begin{definition}[($\Gamma$-Euler characteristics for orbit space definable groupoids)]
\label{def:ECGroupoid}
Let $(\mathcal{G},\iota_{|\mathcal{G}|})$ be an orbit space definable groupoid, and let $\Gamma$ be a finitely presented discrete group.
Define the \emph{$\Z$-Euler characteristic of $\mathcal{G}$} to be
\begin{equation}
\label{eq:ECGroupoid}
    \chi_\Z(\mathcal{G}) =   \int_{|\mathcal{G}|}
        \chi\big(\Ad_{\mathcal{G}_{x}^x}\!\backslash\mathcal{G}_{x}^x  \big) \, d\chi(\mathcal{G}x),
\end{equation}
where we recall that $\mathcal{G}x$ denotes the $\mathcal{G}$-orbit of $x$.
More generally, the \emph{$\Gamma$-Euler characteristic of $\mathcal{G}$} is given by
\begin{equation}
\label{eq:GammaECGroupoid}
    \chi_\Gamma(\mathcal{G}) =   \int_{|\mathcal{G}|}
        \chi\big(\mathcal{G}_{x}^x\backslash\!\HOM(\Gamma,\mathcal{G}_{x}^x) \big) \, d\chi(\mathcal{G}x),
\end{equation}
where the action of $\mathcal{G}_x^x$ on $\HOM(\Gamma,\mathcal{G}_{x}^x)$ is given by pointwise
conjugation.
Note that Equation~\eqref{eq:GammaECGroupoid} reduces to Equation~\eqref{eq:ECGroupoid} in the case
$\Gamma = \Z$ via the identification of $\HOM(\Z,\mathcal{G}_x^x)$ with $\mathcal{G}_x^x$ by choosing
a generator for $\Z$.
\end{definition}

By the following remark, the above Euler characteristics do not depend on the choice of o-minimal structure or embeddings.

\begin{remark}
\label{rem:ECGroupoidWellDef}
Note that as $\mathcal{G}$ is orbit space definable, $|\mathcal{G}|$ is an affine definable space, and each isotropy group
$\mathcal{G}_x^x$ admits the structure of an affine definable topological group.
If $x,y\in\mathcal{G}_0$ are in the same orbit,
i.e., there is an $h\in\mathcal{G}_1$ such that $s(h) = x$ and $t(h) = y$,
then conjugation by $h$ induces an isomorphism of $\mathcal{G}_x^x$ onto $\mathcal{G}_y^y$.
Hence $\Ad_{\mathcal{G}_{x}^x}\!\backslash\mathcal{G}_{x}^x$
and $\mathcal{G}_{x}^x\backslash\!\HOM(\Gamma,\mathcal{G}_{x}^x)$ admit embeddings identifying them with definable proper quotients;
see Remark~\ref{rem:QuotUnique}. As well,
$\chi\big(\Ad_{\mathcal{G}_{x}^x}\!\backslash\mathcal{G}_{x}^x \big)$ and
$\chi\big(\mathcal{G}_{x}^x\backslash\!\HOM(\Gamma,\mathcal{G}_{x}^x) \big)$ are defined
and by Theorem~\ref{thrm:ECHomeoInvar} do not depend on the choice of $x$ in an orbit.
Moreover, as the weak orbit type partition of $|\mathcal{G}|$ into orbits of points with isomorphic isotropy groups
is a finite partition into definable sets, the two integrands in Equations~\eqref{eq:ECGroupoid}
and \eqref{eq:GammaECGroupoid} of Definition~\ref{def:ECGroupoid} are constructible functions on
$|\mathcal{G}|$. It follows that $\chi_\Z(\mathcal{G})$ and $\chi_\Gamma(\mathcal{G})$
are defined. By Remark~\ref{rem:intEChomeoInvar}, $\chi_\Z(\mathcal{G})$ and $\chi_\Gamma(\mathcal{G})$ do not depend on the
o-minimal structure nor the choice of embedding $\iota_{|\mathcal{G}|}$ of $|\mathcal{G}|$ and hence are invariants
of the topological groupoid $\mathcal{G}$.
\end{remark}

We will see in Section~\ref{subsec:GpoidECTranslation} that when $\mathcal{G} = G\ltimes X$ is a translation groupoid,
the definition of $\chi_\Z(\mathcal{G})$ in Equation~\eqref{eq:ECGroupoid} coincides with the orbifold
Euler characteristic $\chi^{(1)}(X,G)$ of \cite{GZEMH-HigherOrbEulerCompactGroup} recalled in
Equation~\eqref{eq:EulerCharGZDef}. If in addition $\Gamma = \Z^\ell$, the definition of
$\chi_{\Z^\ell}(\mathcal{G})$ in Equation~\eqref{eq:GammaECGroupoid} coincides
with the higher-order orbifold Euler characteristic $\chi^{(\ell)}(X,G)$ of \cite{GZEMH-HigherOrbEulerCompactGroup}
recalled in Equation~\eqref{eq:HigherEulerCharGZDef}; see Theorem~\ref{thrm:TranslationECequal} below.
In particular, if $G$ is finite so that $G\ltimes X$ presents a global quotient orbifold,
the definition of $\chi^{(1)}(X,G)$ reduces to the orbifold Euler characteristic of \cite{DixonHarveyEA}
recalled in Equation~\eqref{eq:StringOrbEuler}, which is therefore equal to $\chi_\Z(G\ltimes X)$. More generally,
if $\mathcal{G}$ is an orbifold groupoid presenting the orbifold
$Q$, we will see that $\chi_\Gamma(\mathcal{G})$ coincides with the $\Gamma$-Euler characteristic
$\chi_\Gamma(Q)$ of \cite{FarsiSeaGenOrbEuler}
recalled in Section~\ref{subsec:BackOrbEuler}; this is a consequence of Theorem~\ref{thrm:GamECInertia}
below and the fact that $\chi_\Gamma(Q)$ is the Euler characteristic of the $\Gamma$-inertia space of $Q$,
in the orbifold context called the orbifold of $\Gamma$-sectors $\widetilde{Q}_\Gamma$.
See Section~\ref{subsec:Examples}
for example computations of $\chi_\Z(\mathcal{G})$ and $\chi_\Gamma(\mathcal{G})$.

\begin{remark}
\label{rem:FinitePres}
In the case of an orbifold groupoid, see Section~\ref{subsec:BackOrbEuler}, the isotropy groups
are finite, so it is sufficient to assume $\Gamma$ is finitely generated. In the present context,
we require that $\Gamma$ is finitely presented to ensure that $\HOM(\Gamma,G)$ can be described
as in Notation~\ref{not:FinitePres} by a finite collection of relations and hence given the structure of an affine
definable space.
\end{remark}

Note that if $\Gamma = \{0\}$ is the trivial group, then
$\chi_{\{0\}} = \int_{|\mathcal{G}|} \, d\chi(\mathcal{G}x) = \chi(|\mathcal{G}|)$ is the usual Euler characteristic
of the orbit space.

\begin{remark}
\label{rem:EulerSatake2}
For the Euler-Satake characteristic $\chi^{ES}$ of an orbifold, see Remark~\ref{rem:EulerSatake},
a natural generalization to the case of orbit space definable groupoids considered in this paper
would be to replace $|G_\sigma|$ in Equation~\eqref{eq:EulerSatakeDef}
with the number of connected components of the isotropy group.
Because the Euler-Satake characteristic is finitely additive, integration with respect to the Euler-Satake
characteristic can be defined in the same way as integration with respect to the Euler characteristic;
we could therefore define $\chi_\Gamma^{ES}(\mathcal{G})$ by replacing $\chi$ with $\chi^{ES}$ in
Equations~\eqref{eq:ECGroupoid} and \eqref{eq:GammaECGroupoid}. However, we find the above definition
difficult to motivate, and
the fundamental relationship between the Euler-Satake characteristic and the usual Euler characteristic
breaks down in this more general setting. That is, the identity
$\chi_{\Gamma\times\Z}^{ES}(\mathcal{G}) = \chi_\Gamma(\mathcal{G})$ no longer holds for non-orbifold
groupoids $\mathcal{G}$, as is illustrated by Example~\ref{ex:EulerSatake}. Hence, while the results of
this paper could also be formulated with $\chi$ replaced by $\chi^{ES}$, we restrict our attention to the former.
See also \cite{LeinsterEulerCharCategory}, where the Euler-Satake characteristic of an orbifold is identified as
a case of the Euler characteristic of a finite category.
\end{remark}

\begin{example}
\label{ex:EulerSatake}
Let $X$ be a point with the trivial action of the circle $G = \Sp^1$, and let $\mathcal{G} = G\ltimes X$.
Then $\chi_{\{0\}}(\mathcal{G}) = \chi(|\mathcal{G}|) = 1$ and, using the definition suggested in
Remark~\ref{rem:EulerSatake}, $\chi_{\{0\}}^{ES}(\mathcal{G}) = \chi_{ES}(|\mathcal{G}|) = 1$. As
$\mathcal{G}_x^x$ for the single point $x\in X$ is given by $\Sp^1$ and the conjugation action is trivial,
$\chi_\Z^{ES}(\mathcal{G}) = \chi_\Z(\mathcal{G}) = 0$. Hence, the identity
$\chi_{\Gamma\times\Z}^{ES}(\mathcal{G}) = \chi_\Gamma(\mathcal{G})$ is not satisfied when
$\Gamma = \{ 0\}$ is the trivial group. Similarly, as the isotropy group of any
element of $\HOM(\Gamma,\mathcal{G})$ is simply $G$ for any finitely presented discrete group $\Gamma$,
$\chi_\Gamma^{ES}(\mathcal{G}) = \chi_\Gamma(\mathcal{G})$.
\end{example}


\section{$\chi_\Gamma(\mathcal{G})$ as the Euler characteristic of a topological space}
\label{sec:ECSpace}

In this section, we identify the $\Gamma$-Euler characteristic of an orbit space definable groupoid
$\mathcal{G}$ with the usual Euler characteristic of a topological space, denoted $\Lambda\chi_\Gamma(\mathcal{G})$,
that depends only on the Morita equivalence class of $\mathcal{G}$ as a topological groupoid, and hence prove that
$\chi_\Gamma(\mathcal{G})$ is Morita invariant. To begin, we define the $\Gamma$-inertia groupoid of an arbitrary
topological groupoid, whose orbit space will be the topological space in question.


\subsection{The $\Gamma$-inertia groupoid of a topological groupoid}
\label{subsec:GamInertia}

Let $\mathcal{G}$ be a topological groupoid and $\Gamma$ a finitely presented discrete group.
As a groupoid, the object space of $\Gamma$ is a single point.
Hence, a groupoid homomorphism $\phi\co\Gamma\to\mathcal{G}$ is given by a choice of $x\in\mathcal{G}_0$,
the value of $\phi_0$ at the unique object of the groupoid $\Gamma$, and a group homomorphism
$\phi_1\co\Gamma\to\mathcal{G}_x^x$. In particular, for each $\gamma\in\Gamma$,
$s\circ\phi_1(\gamma) = t\circ\phi_1(\gamma) = x$.

\begin{notation}
\label{not:HomGammaGpoid}
If $\phi\in\HOM(\Gamma,\mathcal{G})$, we will for simplicity use $\phi_0$ to denote
the map of $\phi$ on objects as well as its single value in $\mathcal{G}_0$ at the unique object
of $\Gamma$.
\end{notation}

The space $\HOM(\Gamma,\mathcal{G})$ inherits the compact-open topology from $\Gamma$ and $\mathcal{G}$
with subbase given by the set of $\phi\in\HOM(\Gamma,\mathcal{G})$ such that $\phi_0\in U$ and
$\phi_1(K)\subseteq V$ where $K$ is a finite subset of $\Gamma$, $U\subseteq\mathcal{G}_0$ and $V\subseteq\mathcal{G}_1$
are open, and $s(V)\subseteq U$. For a fixed
$x\in\mathcal{G}_0$, the relative topology on the set of $\phi\in\HOM(\Gamma,\mathcal{G})$ such that
$\phi_0 = x$ coincides with the topology on $\HOM(\Gamma,\mathcal{G}_x^x)$ as a subset of
$(\mathcal{G}_x^x)^\ell$ as described in Notation~\ref{not:FinitePres}.

We now state the following. See \cite{FarsiPflaumSeaton1,FarsiPflaumSeaton2} for the case $\Gamma = \Z$.

\begin{definition}[(The $\Gamma$-inertia groupoid)]
\label{def:GamInertia}
Let $\mathcal{G}$ be a topological groupoid and $\Gamma$ a finitely presented discrete group.
The \emph{$\Gamma$-loop space of $\mathcal{G}$} is the space $\HOM(\Gamma,\mathcal{G})$ with
the compact-open topology described above.
The \emph{$\Gamma$-inertia groupoid of $\mathcal{G}$}, denoted $\Lambda_\Gamma\mathcal{G}$,
is the translation groupoid $\mathcal{G}\ltimes\HOM(\Gamma,\mathcal{G})$ for the action of $\mathcal{G}$
on $\HOM(\Gamma,\mathcal{G})$ by conjugation. Hence, the space of objects of $\Lambda_\Gamma\mathcal{G}$
is the $\Gamma$-loop space $(\Lambda_\Gamma\mathcal{G})_0 = \HOM(\Gamma,\mathcal{G})$,
the anchor map $\alpha_{\Lambda_\Gamma\mathcal{G}}\co\HOM(\Gamma,\mathcal{G})\to\mathcal{G}_0$ is given by
$\alpha_{\Lambda_\Gamma\mathcal{G}}\co\phi\mapsto \phi_0$,
the source of each element of the image of $\phi_1$, and the action of $g\in\mathcal{G}_1$
on $\phi\in\HOM(\Gamma,\mathcal{G})$ such that $s(g) = \phi_0$ is given by conjugation
in $\mathcal{G}_1$, i.e., $(g\ast\phi)_1 = g\phi_1 g^{-1}$. When there is no ambiguity, we will
use the shorthand $\pi_\Lambda$ to denote the orbit map
$\pi_{\Lambda_\Gamma\mathcal{G}}\co\Lambda_\Gamma\mathcal{G}\to|\Lambda_\Gamma\mathcal{G}|$
of the groupoid $\Lambda_\Gamma\mathcal{G}$. Note that $g\ast\phi$ is a homomorphism $\Gamma\to\mathcal{G}$
with $(g\ast\phi)_0 = t(g)$, i.e., a group homomorphism from $\Gamma$ to $\mathcal{G}_{t(g)}^{t(g)}$.
\end{definition}

When $\Gamma = \Z$, the $\Z$-inertia groupoid and space are referred to simply as the \emph{inertia
groupoid} and \emph{inertia space}, respectively, and the $\Z$-loop space is the \emph{loop space}.
Note that the loop space $\HOM(\Z,\mathcal{G})$ can be identified with the subspace of $\mathcal{G}_1$
consisting of $g$ such that $s(g) = t(g)$ by identifying a homomorphism $\phi$ with the image of a
generator of $\Z$. Then $\HOM(\Z,\mathcal{G})$ has the structure of a subgroupoid of
$\mathcal{G}$, and some authors refer to $\HOM(\Z,\mathcal{G})$ as the \emph{isotropy subgroupoid of
$\mathcal{G}$}. However, for our purposes, $\HOM(\Z,\mathcal{G})$ plays the role of a $\mathcal{G}$-space,
with elements treated as objects instead of arrows, and so the groupoid structure is not relevant.
Hence, we will not use the term isotropy subgroupoid.

\begin{remark}
\label{rem:InertiaTranslation}
If $\mathcal{G} = G\ltimes X$ is a translation groupoid where $G$ is a topological group,
the $\Gamma$-loop space $\HOM(\Gamma,G\ltimes X)$ is the topological space
$\{ (\phi,x) \in \HOM(\Gamma,G)\times X : \phi(\gamma)\in G_x\;\forall\gamma\in\Gamma \}$
where $G_x$ denotes the isotropy group of $x\in X$. The $(G\ltimes X)$-action on $\HOM(\Gamma,G\ltimes X)$,
whose anchor is the source map, coincides with restriction of the diagonal $G$-action
on $\HOM(\Gamma,G)\times X$ to the $G$-invariant subset $\HOM(\Gamma,G\ltimes X)$, where the action on
$\HOM(\Gamma,G)$ is by pointwise conjugation.
Choosing a finite presentation $\Gamma = \langle\gamma_1,\ldots,\gamma_\ell\mid R_1,\ldots,R_k\rangle$,
we identify this space via the map $\phi\mapsto (\phi(\gamma_1),\ldots,\phi(\gamma_r))$
with the set of points $(g_1,\ldots,g_\ell,x)\in G^\ell\times X$ such that the $g_i$ satisfy the
relations $R_j$ and $g_i(x) = x$ for each $i$; see Notation~\ref{not:FinitePres}. The $(G\ltimes X)$-action
coincides with the diagonal $G$-action on $G^\ell\times X$, acting by simultaneous conjugation on $G^\ell$-factor.
\end{remark}

We have the following, which was proven for orbifold groupoids in \cite[Lemma~2.5]{FarsiSeaVectorFld}.

\begin{lemma}
\label{lem:InertHomomorph}
If $\mathcal{G}$ and $\mathcal{H}$ are topological groupoids and $\Gamma$ is a finitely presented discrete group,
then a homomorphism $\Phi\co\mathcal{G}\to\mathcal{H}$ induces a homomorphism
$\Lambda_\Gamma\Phi\co\Lambda_\Gamma\mathcal{G}\to\Lambda_\Gamma\mathcal{H}$. If $\Phi$ is
an essential equivalence, then $\Lambda_\Gamma\Phi$ is as well.
\end{lemma}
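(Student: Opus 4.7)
The plan is to construct $\Lambda_\Gamma\Phi$ explicitly and then use the fibered product characterization of essential equivalences to transfer the relevant structure from $\Phi$. On objects, define $(\Lambda_\Gamma\Phi)_0 \co \HOM(\Gamma,\mathcal{G}) \to \HOM(\Gamma,\mathcal{H})$ by $\phi \mapsto \Phi\circ\phi$. Choosing a finite presentation $\Gamma = \langle \gamma_1,\ldots,\gamma_\ell \mid R_1,\ldots,R_k \rangle$, this map is the restriction of $\Phi_1^\ell$ to the definable subset of $\mathcal{G}_1^\ell$ cut out by Notation~\ref{not:FinitePres}, and is therefore continuous with respect to the compact-open topology described in Section~\ref{subsec:GamInertia}. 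On arrows, set $(\Lambda_\Gamma\Phi)_1(g,\phi) = (\Phi_1(g), \Phi\circ\phi)$; continuity follows from the continuity of $\Phi_1$ and $(\Lambda_\Gamma\Phi)_0$, and the structure maps of the translation groupoids are immediately preserved because $\Phi$ preserves source, target, units, inverses, composition, and commutes with the conjugation action.

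Now assume $\Phi$ is an essential equivalence. To show that $t_{\Lambda_\Gamma\mathcal{H}}\circ\pr_1 \co (\Lambda_\Gamma\mathcal{H})_1 \ftimes{s}{(\Lambda_\Gamma\Phi)_0} (\Lambda_\Gamma\mathcal{G})_0 \to (\Lambda_\Gamma\mathcal{H})_0$ is surjective with local sections, I would proceed as follows. Given $\eta \in \HOM(\Gamma,\mathcal{H})$ with $\eta_0 = y$, essential surjectivity of $\Phi$ yields $x\in\mathcal{G}_0$ and $h\in\mathcal{H}_1$ with $s(h) = \Phi_0(x)$ and $t(h) = y$, so conjugation defines $\eta^\prime \co \Gamma \to \mathcal{H}_{\Phi_0(x)}^{\Phi_0(x)}$ by $\eta^\prime(\gamma) = h^{-1}\eta(\gamma)h$. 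The fibered product square for $\Phi$ implies that $\Phi_1$ restricts to a group isomorphism $\mathcal{G}_x^x \to \mathcal{H}_{\Phi_0(x)}^{\Phi_0(x)}$, allowing us to lift $\eta^\prime$ uniquely to a homomorphism $\phi\co\Gamma \to \mathcal{G}_x^x$ with $\Phi\circ\phi = \eta^\prime$; then $((h,\Phi\circ\phi),\phi)$ maps to $\eta$. A local section near $\eta$ is built by composing a local section of $t_\mathcal{H}\circ\pr_1$ for $\Phi$ at $y$ (providing continuous choices of $(x,h)$) with the continuous unique lifting of conjugates of $\eta$ via the fibered product isomorphism on isotropy groups.

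For the fibered product condition on arrows, I need to show that a point $(\phi,\psi,(h,\eta))$ with $\eta = \Phi\circ\phi$ and $h\ast\eta = \Phi\circ\psi$ corresponds to a unique $(g,\phi)\in(\Lambda_\Gamma\mathcal{G})_1$. The fibered product for $\Phi$ gives a unique $g \in \mathcal{G}_1$ with $\Phi_1(g) = h$, $s(g) = \phi_0$, $t(g) = \psi_0$. Applying $\Phi_1$ to $g\phi(\gamma)g^{-1}$ yields $h\eta(\gamma)h^{-1} = \Phi_1(\psi(\gamma))$, and since both $g\phi(\gamma)g^{-1}$ and $\psi(\gamma)$ lie in $\mathcal{G}_{\psi_0}^{\psi_0}$ and agree under $\Phi_1$, the injectivity statement from the fibered product condition (applied to the pair $(\psi_0,\psi_0)$) forces $g\ast\phi = \psi$. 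Hence the correspondence $(g,\phi)\leftrightarrow(\phi,\psi,(h,\eta))$ is a continuous bijection, and continuity of its inverse reduces once more to the fibered product property for $\Phi$.

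The main obstacle is the verification of continuity of the inverse maps, in particular the local-section claim: while the algebraic bijection given by unique lifting is essentially immediate from the fibered product property, one must verify that the lifted data depends continuously on $\eta$ in the compact-open topology. This amounts to unpacking the compact-open topology through the finite-presentation embedding so that continuity of the lift reduces to the continuity of the inverse of $\Phi_1$ on isotropy groups, which is guaranteed by the fibered product square for $\Phi$ being a topological (as opposed to set-theoretic) pullback.
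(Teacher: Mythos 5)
Your proposal is correct and follows essentially the same route as the paper's proof: define $\Lambda_\Gamma\Phi$ by post-composition with $\Phi_1$, use the fibred-product square for $\Phi$ to get isomorphisms $\mathcal{G}_x^x\to\mathcal{H}_{\Phi_0(x)}^{\Phi_0(x)}$ and hence on $\HOM(\Gamma,-)$, obtain surjectivity by conjugating a given $\eta$ into the image of an isotropy group and lifting, build local sections by composing a local section of $t_{\mathcal{H}}\circ\pr_1$ with the continuous inverse on isotropy groups, and verify the fibred-product condition for $(\Lambda_\Gamma\Phi)_1$ from that for $\Phi_1$. The continuity concerns you flag are handled in the paper exactly as you suggest, via the fibred product being a topological pullback.
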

\begin{proof}
Define the map on objects $(\Lambda_\Gamma\Phi)_0\co\HOM(\Gamma,\mathcal{G})\to\HOM(\Gamma,\mathcal{H})$
by $\phi\mapsto\Phi_1\circ\phi$, and then for each
$\phi\in\HOM(\Gamma,\mathcal{G})$ and $g\in\mathcal{G}_1$ such that $s_\mathcal{G}(g) = \phi_0$, we have
$\Phi_1(g)\ast\big((\Lambda_\Gamma\Phi)_0(\phi)\big) = (\Lambda_\Gamma\Phi)_0(g\ast\phi)$. It follows
that $\Phi_1$ induces a homomorphism of the translation groupoids
$\Lambda_\Gamma\Phi\co\mathcal{G}\ltimes\HOM(\Gamma,\mathcal{G})=\Lambda_\Gamma\mathcal{G}\to
\Lambda_\Gamma\mathcal{H}=\mathcal{H}\ltimes\HOM(\Gamma,\mathcal{H})$.

Now suppose $\Phi$ is an essential equivalence. Then $\mathcal{G}_1$ is the fibred product
$\mathcal{H}_1 \ftimes{(s_{\mathcal{H}},t_{\mathcal{H}})}{\Phi_0\times\Phi_0} (\mathcal{G}_0\times\mathcal{G}_0)$,
implying that for each $x\in\mathcal{G}_0$, $\Phi_1$ restricts to an isomorphism
$\mathcal{G}_x^x\to\mathcal{H}_{\Phi_0(x)}^{\Phi_0(x)}$, and hence induces an isomorphism
$\Phi_x\co\HOM(\Gamma,\mathcal{G}_x^x)\to\HOM\big(\Gamma,\mathcal{H}_{\Phi_0(x)}^{\Phi_0(x)}\big)$. The map
\[
    t_{\Lambda_\Gamma\mathcal{H}}\circ\pr_1\co
        \big(\mathcal{H}_1\ftimes{s_{\mathcal{H}}}{\alpha_{\Lambda_\Gamma\mathcal{H}} } \HOM(\Gamma,\mathcal{H})\big)
            \ftimes{s_{\Lambda_\Gamma\mathcal{H}}}{(\Lambda_\Gamma\Phi)_0} \HOM(\Gamma,\mathcal{G})
            \to\HOM(\Gamma,\mathcal{H}),
\]
where $\big(\mathcal{H}_1\ftimes{s_{\mathcal{H}}}{\alpha_{\Lambda_\Gamma\mathcal{H}} }\HOM(\Gamma,\mathcal{H})\big)
\ftimes{s_{\Lambda_\Gamma\mathcal{H}}}{(\Lambda_\Gamma\Phi)_0} \HOM(\Gamma,\mathcal{G})$ is the set of
$\big( (h,\Phi_1\circ\phi),\phi\big)$ with
$\phi\in\HOM(\Gamma,\mathcal{H})$ and $h\in\mathcal{H}_1$ such that $s_{\mathcal{H}}(h) = \Phi_0(\phi_0)$,
is given by $\big((h,\Phi_1\circ\phi),\phi\big)\mapsto h(\Phi_1\circ\phi) h^{-1}$.
As the map $t_\mathcal{H}\circ\pr_1\co\mathcal{H}_1\ftimes{s_{\mathcal{H}}}{\Phi_0}\mathcal{G}_0\to\mathcal{H}_0$
is surjective, each $y\in\mathcal{H}_0$ is in the orbit of some $\Phi_0(x)$, and hence
each $\psi\in\HOM(\Gamma,\mathcal{H})$ such that $\psi_0 = y$ is conjugate via $\mathcal{H}_1$
to an element of $\HOM(\Gamma,\Phi_1(\mathcal{G}_x^x))$. It follows that
$t_{\Lambda_\Gamma\mathcal{H}}\circ\pr_1$ is surjective. To see that $t_{\Lambda_\Gamma\mathcal{H}}\circ\pr_1$
has local sections, let
$\big( (h,\Phi_1\circ\phi),\phi\big)\in\big(\mathcal{H}_1\ftimes{s_{\mathcal{H}}}{\alpha_{\Lambda_\Gamma\mathcal{H}} }\HOM(\Gamma,\mathcal{H})\big)
\ftimes{s_{\Lambda_\Gamma\mathcal{H}}}{(\Lambda_\Gamma\Phi)_0} \HOM(\Gamma,\mathcal{G})$ with
$t_{\Lambda_\Gamma\mathcal{H}}\big( (h,\Phi_1\circ\phi) \big) = h(\Phi_1\circ\phi)h^{-1}=\psi\in\HOM(\Gamma,\mathcal{H})$
so that $t_\mathcal{H}(h) = \psi_0$. As the map $t_\mathcal{H}\circ\pr_1$ admits local sections,
there is an open neighborhood $U$ of $\psi_0$ in $\mathcal{H}_0$ and a continuous section
$\omega\co U\to\mathcal{H}_1\ftimes{s_{\mathcal{H}}}{\Phi_0}\mathcal{G}_0$ of $t_\mathcal{H}\circ\pr_1$ such that $\omega(\psi_0) = (h, \phi_0)$.
Let $\omega_i = \pr_i\circ\omega$ for $i = 1,2$, and then the map
$\alpha_{\Lambda_\Gamma\mathcal{H}}^{-1}(U)\ni\psi^\prime\mapsto\big((\omega_1(\psi_0^\prime), \omega_1(\psi_0^\prime)^{-1}\psi^\prime\omega_1(\psi_0^\prime)), \Phi_{\omega_2(\psi^\prime)}^{-1}\big(\omega_1(\psi_0^\prime)^{-1}\psi^\prime\omega_1(\psi_0^\prime)\big)\big)$ is a local section
of $t_{\Lambda_\Gamma\mathcal{H}}\circ\pr_1$ such that $\psi\mapsto\big( (h,\Phi_1\circ\phi),\phi\big)$.
Similarly, as $\Phi_1$ restricts to an isomorphism $\mathcal{G}_x^x\to\mathcal{H}_{\Phi_0(x)}^{\Phi_0(x)}$
at each $x\in\mathcal{G}_0$, the fact that $\mathcal{G}_1$ is a fibred product and
$\Phi_1$ induces an isomorphism $\Phi_x\co\HOM(\Gamma,\mathcal{G}_x^x)\to\HOM\big(\Gamma,\mathcal{H}_{\Phi_0(x)}^{\Phi_0(x)}\big)$
for each $x\in\mathcal{G}_0$ as above implies that
\[
    \xymatrix@C+1.5cm{
        \mathcal{G}_1\ftimes{s_{\mathcal{G}}}{\alpha_{\Lambda_\Gamma\mathcal{G}}}\HOM(\Gamma,\mathcal{G})
            \ar[d]_{(s_{\Lambda_\Gamma\mathcal{G}},t_{\Lambda_\Gamma\mathcal{G}})}
            \ar@{>}[r]^{(\Lambda_\Gamma\Phi)_1}
        &\mathcal{H}_1\ftimes{s_{\mathcal{H}}}{\alpha_{\Lambda_\Gamma\mathcal{H}}}\HOM(\Gamma,\mathcal{H})
            \ar[d]^{(s_{\Lambda_\Gamma\mathcal{H}},t_{\Lambda_\Gamma\mathcal{H}})}
        \\
        \HOM(\Gamma,\mathcal{G})\times\HOM(\Gamma,\mathcal{G})
            \ar[r]^{(\Lambda_\Gamma\Phi)_0\times(\Lambda_\Gamma\Phi)_0}
        &
        \HOM(\Gamma,\mathcal{H})\times\HOM(\Gamma,\mathcal{H})
    }
\]
is as well a fibred product, completing the proof.
\end{proof}

We recall the following, which was proven for arbitrary groupoids (without a topology)
in \cite[Theorem~3.1]{FarsiSeaGenOrbEuler}, and in \cite[Proposition 2-1(2)]{TamanoiCovering}
for the case of global quotient orbifolds. We summarize the proof for topological groupoids
here, referring the reader to \cite[Theorem~3.1]{FarsiSeaGenOrbEuler} for more details.

\begin{lemma}[(The $\Gamma$-inertia groupoid is iterative)]
\label{lem:IterateInertia}
Let $\Gamma_1$ and $\Gamma_2$ be finitely presented discrete groups. Then for a topological groupoid $\mathcal{G}$,
$\Lambda_{\Gamma_1\times\Gamma_2}\mathcal{G}$ is isomorphic to $\Lambda_{\Gamma_2}(\Lambda_{\Gamma_1}\mathcal{G})$.
\end{lemma}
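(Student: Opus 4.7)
The plan is to write down explicit mutually inverse groupoid homomorphisms between $\Lambda_{\Gamma_1\times\Gamma_2}\mathcal{G}$ and $\Lambda_{\Gamma_2}(\Lambda_{\Gamma_1}\mathcal{G})$ and verify continuity with respect to the compact-open topology. The algebraic content is essentially that of \cite[Theorem~3.1]{FarsiSeaGenOrbEuler}, so the novelty here is topological.

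First, for the underlying bijection on objects, an element $\phi\in\HOM(\Gamma_1\times\Gamma_2,\mathcal{G})$ is determined by $\phi_0=x\in\mathcal{G}_0$ together with a group homomorphism $\phi_1\colon\Gamma_1\times\Gamma_2\to\mathcal{G}_x^x$, which in turn is the same data as a pair of group homomorphisms $\phi^{(i)}\colon\Gamma_i\to\mathcal{G}_x^x$ for $i=1,2$ whose images commute pointwise. Since the isotropy group of an object $\phi^{(1)}\in\HOM(\Gamma_1,\mathcal{G})$ in the translation groupoid $\Lambda_{\Gamma_1}\mathcal{G}$ is precisely the centralizer of $\phi^{(1)}(\Gamma_1)$ in $\mathcal{G}_{\phi^{(1)}_0}^{\phi^{(1)}_0}$, such a pair is exactly the same data as a single element $\psi\in\HOM(\Gamma_2,\Lambda_{\Gamma_1}\mathcal{G})$ with $\psi_0=\phi^{(1)}$ and $\psi_1(\gamma_2)=(\phi^{(2)}(\gamma_2),\phi^{(1)})$. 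The inverse map reconstructs $\phi$ from $(\phi^{(1)},\phi^{(2)})$ by $\phi(\gamma_1,\gamma_2)=\phi^{(1)}(\gamma_1)\,\phi^{(2)}(\gamma_2)$.

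Next, the correspondence extends to arrows. An arrow of $\Lambda_{\Gamma_1\times\Gamma_2}\mathcal{G}$ is a pair $(g,\phi)$ with $g\in\mathcal{G}_1$ and $s(g)=\phi_0$, while an arrow of $\Lambda_{\Gamma_2}(\Lambda_{\Gamma_1}\mathcal{G})$ is a pair $\bigl((g,\phi^{(1)}),\psi\bigr)$ with $(g,\phi^{(1)})$ an arrow of $\Lambda_{\Gamma_1}\mathcal{G}$ whose source agrees with the anchor of $\psi$. On both sides the data reduces to a choice of $g\in\mathcal{G}_1$ together with an object at $s(g)$, so the bijection on objects extends to a bijection on arrows. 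Because the $\mathcal{G}$-action on $\HOM(\Gamma_1\times\Gamma_2,\mathcal{G})$ is by pointwise conjugation and hence acts componentwise on the pair $(\phi^{(1)},\phi^{(2)})$, it matches the iterated conjugation action on the right-hand side, and source, target, unit, inverse, and composition are preserved by direct inspection.

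Finally, for continuity, since $\Gamma_1$, $\Gamma_2$, and $\Gamma_1\times\Gamma_2$ are discrete, the compact-open topology on each $\HOM(-,\mathcal{H})$ is generated by subbasic sets of the form $\{\phi:\phi_0\in U,\,\phi_1(\gamma)\in V\}$ for a single group element $\gamma$ and opens $U\subseteq\mathcal{H}_0$, $V\subseteq\mathcal{H}_1$ with $s(V)\subseteq U$. I would unpack such subbasic sets in $\HOM(\Gamma_2,\Lambda_{\Gamma_1}\mathcal{G})$ using $(\Lambda_{\Gamma_1}\mathcal{G})_0=\HOM(\Gamma_1,\mathcal{G})$ and $(\Lambda_{\Gamma_1}\mathcal{G})_1=\mathcal{G}_1\ftimes{s}{\alpha}\HOM(\Gamma_1,\mathcal{G})$, and compare with the subbasic sets in $\HOM(\Gamma_1\times\Gamma_2,\mathcal{G})$ determined by the elements $(\gamma_1,e)$ and $(e,\gamma_2)$; these generate the same topology on the common underlying set. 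I expect the main obstacle to be purely notational: keeping the iterated compact-open subbase on the right-hand side organized and cross-referenced with the single-level subbase on the left.
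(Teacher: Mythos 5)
Your proposal is correct and takes essentially the same approach as the paper: the paper likewise identifies the isotropy group of $\phi^{(1)}\in\HOM(\Gamma_1,\mathcal{G})$ in $\Lambda_{\Gamma_1}\mathcal{G}$ with the centralizer $C_{\mathcal{G}_{\phi^{(1)}_0}^{\phi^{(1)}_0}}(\phi^{(1)}_1)$, defines the bijection by the pointwise product $(\gamma_1,\gamma_2)\mapsto\phi^{(1)}(\gamma_1)\phi^{(2)}(\gamma_2)$, and checks $\mathcal{G}$-equivariance to conclude the translation groupoids are isomorphic. The only cosmetic difference is in the continuity check, where the paper deduces continuity of the map from continuity of multiplication in $\mathcal{G}_1$ and of its inverse from composition with the projections $\Gamma_1\times\Gamma_2\to\Gamma_i$, rather than comparing compact-open subbases directly.
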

\begin{proof}
An element of
$\HOM(\Gamma_2,\Lambda_{\Gamma_1}\mathcal{G}) = \HOM\big(\Gamma_2,\mathcal{G}\ltimes\HOM(\Gamma_1,\mathcal{G})\big)$
is given by a choice of $\phi\in\HOM(\Gamma_1,\mathcal{G})$ and
$\psi\in\HOM(\Gamma_2,(\Lambda_{\Gamma_1}\mathcal{G})_\phi^\phi) =
\HOM(\Gamma_2,C_{\mathcal{G}_{\phi_0}^{\phi_0}}(\phi_1))$.
Hence, the map sending $(\psi,\phi)$ to the pointwise product
$\phi_1\psi_1\in\HOM\big(\Gamma_1\times\Gamma_2,\mathcal{G}_{\phi_0}^{\phi_0}\big)$ induces a bijection
$e_0\co\HOM(\Gamma_2,\Lambda_{\Gamma_1}\mathcal{G})\to\HOM(\Gamma_1\times\Gamma_2,\mathcal{G})$.
The map $e_0$ is continuous because multiplication in $\mathcal{G}_1$ is continuous, and $e_0^{-1}$
is given by composition with the projections $\Gamma_1\times\Gamma_2\to\Gamma_i$ so is continuous
as well. Therefore, $e_0$ it is a homeomorphism. If $h\in\mathcal{G}_1$ such that $s_{\mathcal{G}}(h) = \phi_0$,
then $h\ast e_0(\phi,\psi) = e_0(h\ast\phi, h\ast\psi)$ so that $e_0$ is $\mathcal{G}$-equivariant.
It follows that $e_0$ induces an isomorphism
$e\co\Lambda_{\Gamma_2}(\Lambda_{\Gamma_1}\mathcal{G})\to\Lambda_{\Gamma_1\times\Gamma_2}\mathcal{G}$
of topological groupoids.
\end{proof}

The construction of the $\Gamma$-inertia groupoid also commutes with restriction to subsets
in the following sense.

\begin{lemma}[(The $\Gamma$-inertia groupoid is local in $\mathcal{G}_0$)]
\label{lem:InertRestriction}
Let $\mathcal{G}$ be a topological groupoid, let $\Gamma$ be a finitely presented discrete group, and
let $U\subseteq\mathcal{G}_0$. Recall that the anchor map
$\alpha_{\Lambda_\Gamma\mathcal{G}}\co(\Lambda_\Gamma\mathcal{G})_0 = \HOM(\Gamma,\mathcal{G})\to\mathcal{G}_0$
is defined by $\alpha_{\Lambda_\Gamma\mathcal{G}}(\phi) = \phi_0$; see Notation~\ref{not:HomGammaGpoid}.
Then $\Lambda_\Gamma(\mathcal{G}_{|U}) = (\Lambda_\Gamma\mathcal{G})_{| \alpha_{\Lambda_\Gamma\mathcal{G}}^{-1}(U)}$.
\end{lemma}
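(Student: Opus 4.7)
The plan is to verify the equality by unpacking both sides and exhibiting an identification of object spaces, arrow spaces, and structure maps. First I would identify the object space of $\Lambda_\Gamma(\mathcal{G}_{|U})$, which by definition is $\HOM(\Gamma,\mathcal{G}_{|U})$. The key observation is that any groupoid homomorphism $\phi\co\Gamma\to\mathcal{G}_{|U}$ factors through a single isotropy group $(\mathcal{G}_{|U})_{\phi_0}^{\phi_0}$ with $\phi_0\in U$, and since isotropy groups involve only loops at a point, $(\mathcal{G}_{|U})_x^x = \mathcal{G}_x^x$ for every $x\in U$. Hence $\HOM(\Gamma,\mathcal{G}_{|U})$ is in natural bijection with $\{\phi\in\HOM(\Gamma,\mathcal{G}) : \phi_0\in U\} = \alpha_{\Lambda_\Gamma\mathcal{G}}^{-1}(U)$, which is the object space of $(\Lambda_\Gamma\mathcal{G})_{|\alpha_{\Lambda_\Gamma\mathcal{G}}^{-1}(U)}$. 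This bijection is a homeomorphism because the compact-open topology on $\HOM(\Gamma,\mathcal{G}_{|U})$ agrees with the subspace topology induced from $\HOM(\Gamma,\mathcal{G})$: both have the subbase recalled in Section~\ref{subsec:GamInertia}, and restricting to opens $U^\prime\subseteq U$ and $V^\prime\subseteq(\mathcal{G}_{|U})_1$ produces the same neighborhoods as intersecting $\HOM(\Gamma,\mathcal{G})$-subbase elements with $\alpha_{\Lambda_\Gamma\mathcal{G}}^{-1}(U)$.

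Next I would check that the arrow spaces agree. An arrow of $\Lambda_\Gamma(\mathcal{G}_{|U}) = \mathcal{G}_{|U}\ltimes\HOM(\Gamma,\mathcal{G}_{|U})$ is a pair $(g,\phi)$ with $g\in(\mathcal{G}_{|U})_1$ and $\phi\in\HOM(\Gamma,\mathcal{G}_{|U})$ satisfying $s(g)=\phi_0$; equivalently, $g\in\mathcal{G}_1$ with $s(g),t(g)\in U$, $\phi_0 = s(g)\in U$, and $\phi_1(\Gamma)\subseteq\mathcal{G}_{\phi_0}^{\phi_0}$. An arrow of $(\Lambda_\Gamma\mathcal{G})_{|\alpha_{\Lambda_\Gamma\mathcal{G}}^{-1}(U)}$ is a pair $(g,\phi)$ in $(\Lambda_\Gamma\mathcal{G})_1 = \mathcal{G}_1\ftimes{s}{\alpha_{\Lambda_\Gamma\mathcal{G}}}\HOM(\Gamma,\mathcal{G})$ whose source $\phi$ and target $g\ast\phi$ both lie in $\alpha_{\Lambda_\Gamma\mathcal{G}}^{-1}(U)$; since $\alpha_{\Lambda_\Gamma\mathcal{G}}(\phi) = \phi_0 = s(g)$ and $\alpha_{\Lambda_\Gamma\mathcal{G}}(g\ast\phi) = t(g)$, this is exactly the condition that $s(g),t(g)\in U$. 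Using the identification from the previous paragraph, the two arrow spaces coincide as topological spaces.

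Finally I would verify that the structure maps match under these identifications. The source, target, unit, inverse, and multiplication on $\Lambda_\Gamma(\mathcal{G}_{|U})$ are the restrictions of those on $\Lambda_\Gamma\mathcal{G}$ in view of Definition~\ref{def:GamInertia}: $s(g,\phi) = \phi$, $t(g,\phi) = g\ast\phi$ with $(g\ast\phi)_1 = g\phi_1 g^{-1}$, $u(\phi) = (1_{\phi_0},\phi)$, $i(g,\phi) = (g^{-1}, g\ast\phi)$, and $(h,g\ast\phi)(g,\phi) = (hg,\phi)$. These formulas depend only on the ambient groupoid operations of $\mathcal{G}$ and the action by conjugation, so they coincide with the corresponding structure maps of $(\Lambda_\Gamma\mathcal{G})_{|\alpha_{\Lambda_\Gamma\mathcal{G}}^{-1}(U)}$ restricted from $\Lambda_\Gamma\mathcal{G}$.

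Since there is no genuine obstacle, the ``main step'' is simply noting that isotropy groups of $\mathcal{G}_{|U}$ at a point of $U$ are unchanged from $\mathcal{G}$; everything else is diagram chasing, and one only needs to be a little careful to confirm that the compact-open topologies on both sides agree, which follows because the subbasic opens on $\HOM(\Gamma,\mathcal{G}_{|U})$ are precisely the intersections of those on $\HOM(\Gamma,\mathcal{G})$ with $\alpha_{\Lambda_\Gamma\mathcal{G}}^{-1}(U)$.
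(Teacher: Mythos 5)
Your proposal is correct and follows essentially the same route as the paper: the paper identifies $\HOM(\Gamma,\mathcal{G}_{|U})$ with $\alpha_{\Lambda_\Gamma\mathcal{G}}^{-1}(U)$ and then simply observes that both groupoids are the restriction of the $\mathcal{G}$-action to this set. Your write-up just makes explicit the arrow-space comparison and the agreement of the compact-open and subspace topologies, which the paper leaves implicit.
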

\begin{proof}
We have
\begin{align*}
    \HOM(\Gamma,\mathcal{G}_{|U})
        &=      \{\phi\in\HOM(\Gamma,\mathcal{G}) : \phi_0\in U\}
        \\&=    \alpha_{\Lambda_\Gamma\mathcal{G}}^{-1}(U)
\end{align*}
so that the object spaces of $\Lambda_\Gamma(\mathcal{G}_{|U})$ and
$(\Lambda_\Gamma\mathcal{G})_{|\alpha_{\Lambda_\Gamma\mathcal{G}}^{-1}(U)}$ coincide. Then the result follows from the fact that
each groupoid is given by the restriction of the $\mathcal{G}$-action to this set.
\end{proof}

We recall the following.

\begin{definition}[(Saturation)]
\label{def:Saturation}
Let $\mathcal{G}$ be a topological groupoid.
For a subset $U\subseteq\mathcal{G}_0$, the \emph{saturation of $U$}, denoted $\Sat(U)$,
is defined to be $\pi^{-1}(\pi(U))$, where we recall that $\pi\co\mathcal{G}_0\to|\mathcal{G}|$
denotes the orbit map. A set $U$ is \emph{saturated} if $U = \Sat(U)$.
\end{definition}

Let $|\alpha_{\Lambda_\Gamma\mathcal{G}}|\co|\Lambda_\Gamma\mathcal{G}| \to |\mathcal{G}|$ denote the map on orbit spaces induced
by $\alpha_{\Lambda_\Gamma\mathcal{G}}$, i.e., $|\alpha_{\Lambda_\Gamma\mathcal{G}}|\co\mathcal{G}\phi\mapsto\mathcal{G}\phi_0$.
Note that $|\alpha_{\Lambda_\Gamma\mathcal{G}}|$
is obviously surjective; for each $\mathcal{G}x\in|\mathcal{G}|$, there is a
$\phi\in\HOM(\Gamma,\mathcal{G}) = (\Lambda_\Gamma\mathcal{G})_0$ such that $\phi_0 = x$
and $\phi(\gamma) = u(x)$ for each $\gamma\in\Gamma$, and then
$|\alpha_{\Lambda_\Gamma\mathcal{G}}|(\mathcal{G}\phi) = \mathcal{G}x$. Note further that
$\pi_{\mathcal{G}}\circ \alpha_{\Lambda_\Gamma\mathcal{G}} = |\alpha_{\Lambda_\Gamma\mathcal{G}}|\circ\pi_\Lambda$, where we recall that $\pi_\Lambda$
denotes the orbit map for the groupoid $\Lambda_\Gamma\mathcal{G}$.

If $U\subseteq\mathcal{G}_0$ with the subspace topology, then the inclusion $\nu_0\co U\to\mathcal{G}_0$
induces a groupoid homomorphism $\nu\co\mathcal{G}_{|U}\to\mathcal{G}$, and it is easy to see that the map
$\nu_1$ on arrows is a homeomorphism onto its image. For the groupoid homomorphism
$\Lambda_\Gamma\nu\co\Lambda_\Gamma(\mathcal{G}_{|U})\to\Lambda_\Gamma\mathcal{G}$
described by Lemma~\ref{lem:InertHomomorph}, we have
$(\Lambda_\Gamma\nu)_0(\phi) = \nu_1\circ\phi$, so that $(\Lambda_\Gamma\nu)_0$
is simply the natural inclusion of $\HOM(\Gamma,\mathcal{G}_{|U})$ into $\HOM(\Gamma,\mathcal{G})$
as a subspace. Combining this observation with Lemmas~\ref{lem:InertHomomorph} and \ref{lem:InertRestriction},
we have the following.

\begin{corollary}
\label{cor:InertRestrictionEssEquiv}
Let $\mathcal{G}$ be a topological groupoid and let $\Gamma$ be a finitely presented discrete group.
Suppose $U\subseteq\mathcal{G}_0$ such that the inclusion $\nu\co\mathcal{G}_{|U}\to\mathcal{G}_{|\Sat(U)}$
is an essential equivalence. Then the induced map
$\Lambda_\Gamma\nu\co\Lambda_\Gamma(\mathcal{G}_{|U})\to(\Lambda_\Gamma\mathcal{G})_{|\alpha_{\Lambda_\Gamma\mathcal{G}}^{-1}(\Sat(U))}$
defined in Lemma~\ref{lem:InertHomomorph}
is an essential equivalence in particular inducing a homeomorphism of $|\Lambda_\Gamma(\mathcal{G}_{|U})|$
with $|\alpha_{\Lambda_\Gamma\mathcal{G}}|^{-1}(\pi(U))\subseteq|\Lambda_\Gamma(\mathcal{G})|$.
\end{corollary}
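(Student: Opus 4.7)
The plan is to combine the two immediately preceding lemmas with a bookkeeping identification of the restricted orbit space. Since $\nu\co\mathcal{G}_{|U}\to\mathcal{G}_{|\Sat(U)}$ is an essential equivalence by hypothesis, Lemma~\ref{lem:InertHomomorph} gives that the induced groupoid homomorphism
\[
    \Lambda_\Gamma\nu\co\Lambda_\Gamma(\mathcal{G}_{|U})\to\Lambda_\Gamma(\mathcal{G}_{|\Sat(U)})
\]
is itself an essential equivalence. Applying Lemma~\ref{lem:InertRestriction} with $U$ replaced by $\Sat(U)$ identifies $\Lambda_\Gamma(\mathcal{G}_{|\Sat(U)})=(\Lambda_\Gamma\mathcal{G})_{|\alpha_{\Lambda_\Gamma\mathcal{G}}^{-1}(\Sat(U))}$, so that $\Lambda_\Gamma\nu$ becomes precisely the map described in the statement. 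This gives the essential equivalence claim immediately.

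For the homeomorphism of orbit spaces, I would invoke the standard fact that an essential equivalence of (open) topological groupoids induces a homeomorphism of orbit spaces: the defining map $t\circ\pr_1$ admits local sections and hence is an open surjection, from which one checks that the induced continuous bijection on orbit spaces is open, hence a homeomorphism. Applied to $\Lambda_\Gamma\nu$, this yields
\[
    |\Lambda_\Gamma(\mathcal{G}_{|U})|\xrightarrow{\ \cong\ }\bigl|(\Lambda_\Gamma\mathcal{G})_{|\alpha_{\Lambda_\Gamma\mathcal{G}}^{-1}(\Sat(U))}\bigr|.
\]

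It remains to identify the right-hand side as a subspace of $|\Lambda_\Gamma\mathcal{G}|$ with $|\alpha_{\Lambda_\Gamma\mathcal{G}}|^{-1}(\pi(U))$. First I would check that $\alpha_{\Lambda_\Gamma\mathcal{G}}^{-1}(\Sat(U))\subseteq\HOM(\Gamma,\mathcal{G})$ is saturated under the conjugation $\mathcal{G}$-action: if $\phi_0\in\Sat(U)$ and $s(g)=\phi_0$, then $(g\ast\phi)_0=t(g)$ lies in the same $\mathcal{G}$-orbit in $\mathcal{G}_0$ as $\phi_0$, hence in $\Sat(U)$. Since the orbit map $\pi_\Lambda$ is open (as $\Lambda_\Gamma\mathcal{G}$ is open), the orbit space of the restriction embeds naturally as an open subspace of $|\Lambda_\Gamma\mathcal{G}|$ consisting of those orbits $\mathcal{G}\phi$ with $\phi_0\in\Sat(U)$. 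Using the identity $\pi_{\mathcal{G}}\circ\alpha_{\Lambda_\Gamma\mathcal{G}}=|\alpha_{\Lambda_\Gamma\mathcal{G}}|\circ\pi_\Lambda$ noted just before the statement, together with $\pi(\Sat(U))=\pi(U)$, this subspace is exactly $|\alpha_{\Lambda_\Gamma\mathcal{G}}|^{-1}(\pi(U))$.

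Because the corollary is really a direct chaining of Lemmas~\ref{lem:InertHomomorph} and \ref{lem:InertRestriction} with the standard orbit-space homeomorphism for essential equivalences, I do not anticipate a substantive obstacle. The only genuinely non-formal step is the saturation verification together with the identification of the restricted orbit space as a topological subspace of the ambient orbit space; both are routine consequences of openness of the orbit map and the definition of the anchor map on the $\Gamma$-inertia groupoid.
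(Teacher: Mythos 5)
Your proposal is correct and follows essentially the same route as the paper, which justifies the corollary exactly by chaining Lemma~\ref{lem:InertHomomorph} (applied to the essential equivalence $\nu\co\mathcal{G}_{|U}\to\mathcal{G}_{|\Sat(U)}$) with Lemma~\ref{lem:InertRestriction} (applied to $\Sat(U)$) and the standard orbit-space homeomorphism for essential equivalences. The only nitpick is your claim that the restricted orbit space sits as an \emph{open} subspace of $|\Lambda_\Gamma\mathcal{G}|$ — this needs $\Sat(U)$ open, which is not assumed — but openness is not needed: saturation plus openness of $\pi_\Lambda$ already gives that the quotient topology agrees with the subspace topology on $|\alpha_{\Lambda_\Gamma\mathcal{G}}|^{-1}(\pi(U))$, which is all the corollary asserts.
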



\subsection{The Euler characteristic of the $\Gamma$-inertia space}
\label{subsec:GpoidECSpace}

Let $\mathcal{G}$ be an orbit space definable groupoid and $\Gamma$ a finitely presented discrete group.
We introduce the notion of $\mathcal{G}$ being $\Gamma$-inertia definable, which ensures that
the inertia space $|\Lambda_\Gamma\mathcal{G}|$ has a well-defined Euler characteristic $\Lambda\chi_\Gamma(\mathcal{G})$ and that
this Euler characteristic coincides with $\chi_\Gamma(\mathcal{G})$; see Theorem~\ref{thrm:GamECInertia}.

\begin{definition}[($\Gamma$-inertia definable groupoid and $\Gamma$-inertia Euler characteristic)]
\label{def:GamInertiaDefinable}
Let $\Gamma$ be a finitely presented discrete group.
\begin{enumerate}
\item[(i)]
A \emph{$\Gamma$-inertia definable groupoid} $\mathcal{G}$ is an orbit space definable groupoid along with
an embedding $\iota_{|\Lambda_\Gamma\mathcal{G}|}$ of $|\Lambda_\Gamma\mathcal{G}|$ into
Euclidean space with respect to which $|\Lambda_\Gamma\mathcal{G}|$ is an affine definable space and
$|\alpha_{\Lambda_\Gamma\mathcal{G}}|$ is a morphism of affine definable spaces.

\item[(ii)]
If $\mathcal{G}$ is $\Z$-inertia definable, we define the \emph{inertia Euler characteristic} $\Lambda\chi(\mathcal{G})$ to be
\[
    \Lambda\chi(\mathcal{G})  =  \chi\big(|\Lambda_\Z\mathcal{G}|\big),
\]
the Euler characteristic of the affine definable space $|\Lambda_\Z\mathcal{G}|$.
If $\mathcal{G}$ is $\Gamma$-inertia definable, we define
the \emph{$\Gamma$-inertia Euler characteristic} $\Lambda\chi_\Gamma(\mathcal{G})$ to be
\[
    \Lambda\chi_\Gamma(\mathcal{G})  =  \chi\big(|\Lambda_\Gamma\mathcal{G}|\big),
\]
the Euler characteristic of the affine definable space $|\Lambda_\Gamma\mathcal{G}|$.
\end{enumerate}
As in the case of orbit definable groupoids, we will often say simply that $\mathcal{G}$ is $\Gamma$-inertia definable
and assume that $\iota_{|\Lambda_\Gamma\mathcal{G}|}$ denotes the chosen embedding.
\end{definition}

\begin{remark}
\label{rem:GamInertiaDefinableGOrbitSpaceDef}
If $\mathcal{G}$ is $\Gamma$-inertia definable, then both $|\mathcal{G}|$ and $|\Lambda_\Gamma\mathcal{G}|$
are affine definable spaces so that $\chi_\Gamma(\mathcal{G})$ and $\Lambda\chi_\Gamma(\mathcal{G})$ are defined.
The definition of $\Lambda\chi_\Gamma(\mathcal{G})$ makes sense if $|\Lambda_\Gamma\mathcal{G}|$ is an affine
definable space, even if $\mathcal{G}$ is not orbit space definable. However, our interest is in the relationship
between these Euler characteristics. Moreover, we will make frequent use of
the compatibility of the affine definable structures of $|\Lambda_\Gamma\mathcal{G}|$ and $|\mathcal{G}|$ via
$|\alpha_{\Lambda_\Gamma\mathcal{G}}|$, and we do not know of an example where $|\Lambda_\Gamma\mathcal{G}|$ admits an affine
definable structure while $|\mathcal{G}|$ does not.
\end{remark}

\begin{remark}
\label{rem:GamInertiaDefinableWellDef}
Once again, by Theorem~\ref{thrm:ECHomeoInvar}, $\Lambda\chi_\Gamma(\mathcal{G})$ depends only on the structure
of $\mathcal{G}$ as a topological groupoid and not on the o-minimal structure or choice of embeddings.
\end{remark}

If $G$ is a compact Lie group and $A$ is a semialgebraic $G$-set, then identifying $G$ with a compact linear algebraic group,
$G\ltimes A$ is $\Gamma$-inertia definable for any finitely presented discrete group $\Gamma$ by the description in
Remark~\ref{rem:InertiaTranslation} and Corollary~\ref{cor:DefinGpdOrbitDefin}. We now show that cocompact proper Lie
groupoids admit this structure as well.

\begin{lemma}[(Cocompact proper Lie groupoids are $\Gamma$-inertia definable)]
\label{lem:LieGpdGamInertDefin}
Let $\mathcal{G}$ be a cocompact proper Lie groupoid. Then for any finitely presented discrete group
$\Gamma$, $|\Lambda_\Gamma\mathcal{G}|$ is compact. Moreover, $|\mathcal{G}|$ and $|\Lambda_\Gamma\mathcal{G}|$
admit embeddings $\iota_{|\mathcal{G}|}$ and $\iota_{|\Lambda_\Gamma\mathcal{G}|}$, respectively,
with respect to which $\mathcal{G}$ is $\Gamma$-inertia definable.
\end{lemma}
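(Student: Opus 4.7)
The plan is to leverage Proposition~\ref{prop:LieGpdOrbitDefin} (which already supplies an embedding $\iota_{|\mathcal{G}|}$ making $\mathcal{G}$ orbit space definable), to derive compactness of $|\Lambda_\Gamma\mathcal{G}|$ directly from cocompactness of $\mathcal{G}$ and properness of $(s,t)$, and then to construct $\iota_{|\Lambda_\Gamma\mathcal{G}|}$ by combining local semialgebraic models from the slice theorem with a global triangulation argument of the type used in Proposition~\ref{prop:LieGpdOrbitDefin}.

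For compactness, fix a finite presentation $\Gamma=\langle\gamma_1,\ldots,\gamma_\ell\mid R_1,\ldots,R_k\rangle$; by Notation~\ref{not:FinitePres}, $\HOM(\Gamma,\mathcal{G})$ embeds as a closed subset of $\mathcal{G}_1^\ell$ via $\phi\mapsto(\phi_1(\gamma_1),\ldots,\phi_1(\gamma_\ell))$. Since $\mathcal{G}$ is cocompact, there exists a compact $K\subset\mathcal{G}_0$ with $\pi_{\mathcal{G}}(K)=|\mathcal{G}|$, and properness makes $(s,t)^{-1}(K\times K)\subset\mathcal{G}_1$ compact. Hence $\alpha_{\Lambda_\Gamma\mathcal{G}}^{-1}(K)$, being closed in $\big((s,t)^{-1}(K\times K)\big)^\ell$, is compact. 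Given any $\phi\in\HOM(\Gamma,\mathcal{G})$ with $\phi_0=x$, cocompactness supplies $y\in K$ and $g\in\mathcal{G}_1$ with $s(g)=x$, $t(g)=y$, and then $(g\ast\phi)_0=y\in K$, so $\pi_\Lambda\big(\alpha_{\Lambda_\Gamma\mathcal{G}}^{-1}(K)\big)=|\Lambda_\Gamma\mathcal{G}|$ is compact.

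For the semialgebraic structure on $|\Lambda_\Gamma\mathcal{G}|$, I would invoke the slice theorem for proper Lie groupoids of \cite{PPTOrbitSpace}: every point of $\mathcal{G}_0$ admits a saturated open neighborhood $W$ such that $\mathcal{G}|_W$ is Morita equivalent to a translation groupoid $H\ltimes V$, with $H$ a compact Lie group acting linearly on an $H$-invariant open ball $V$. Combining Lemma~\ref{lem:InertHomomorph}, Lemma~\ref{lem:InertRestriction}, and Corollary~\ref{cor:InertRestrictionEssEquiv}, this Morita equivalence induces a homeomorphism
\[
    |H\ltimes\HOM(\Gamma,H\ltimes V)|\;\cong\;|\alpha_{\Lambda_\Gamma\mathcal{G}}|^{-1}\big(\pi_{\mathcal{G}}(W)\big).
\]
Identifying $H$ with a compact linear algebraic group and $V$ with a semialgebraic open ball, Remark~\ref{rem:InertiaTranslation} presents $\HOM(\Gamma,H\ltimes V)$ as a semialgebraic subset of $H^\ell\times V$, and Corollary~\ref{cor:DefinGpdOrbitDefin} embeds $|H\ltimes\HOM(\Gamma,H\ltimes V)|$ as a semialgebraic set. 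Compactness of $|\mathcal{G}|$ allows a cover by finitely many such $\pi_{\mathcal{G}}(W_i)$, showing that $|\Lambda_\Gamma\mathcal{G}|$ is locally semialgebraic.

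To glue the local models into a global embedding and to ensure $|\alpha_{\Lambda_\Gamma\mathcal{G}}|$ is a morphism of affine definable spaces, I would refine the triangulation of $|\mathcal{G}|$ used in the proof of Proposition~\ref{prop:LieGpdOrbitDefin} so that it is simultaneously subordinate to the covering $\{\pi_{\mathcal{G}}(W_i)\}$ and compatible with the (weak) orbit type stratification of $|\mathcal{G}|$; on each closed simplex $\sigma$ the local slice model makes $|\alpha_{\Lambda_\Gamma\mathcal{G}}|^{-1}(\sigma)$ semialgebraic in a controlled way, and a compatible simplicial triangulation of $|\Lambda_\Gamma\mathcal{G}|$ may then be assembled so that $|\alpha_{\Lambda_\Gamma\mathcal{G}}|$ is simplicial. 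Embedding both complexes linearly into Euclidean space yields the required $\iota_{|\mathcal{G}|}$ and $\iota_{|\Lambda_\Gamma\mathcal{G}|}$. The main obstacle is precisely the construction of this compatible triangulation: one must verify that the stratified structure imposed on $|\Lambda_\Gamma\mathcal{G}|$ by the slice theorem is globally coherent across overlapping charts and that the relevant analogue of \cite[Corollary~7.2]{PPTOrbitSpace} holds for the inertia space, extending the $\Gamma=\Z$ case treated in \cite{FarsiPflaumSeaton2} to an arbitrary finitely presented $\Gamma$ by repeating the argument with the local model $H\ltimes V$ and the semialgebraic structure on $\HOM(\Gamma,H)$.
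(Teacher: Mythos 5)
Your local analysis follows the paper's strategy, and your compactness argument is a legitimate (and arguably cleaner) alternative to the paper's: the paper instead shrinks each slice chart to $U_x^\prime$ with $\overline{U_x^\prime}\subset U_x$, covers $|\mathcal{G}|$ by finitely many $\pi(U_x^\prime)$, and writes $|\Lambda_\Gamma\mathcal{G}|$ as a finite union of the compact sets $\pi_\Lambda(\HOM(\Gamma,\mathcal{G}_{|\overline{U_x^\prime}}))$; your direct use of properness of $(s,t)$ together with a compact $K$ with $\pi(K)=|\mathcal{G}|$ reaches the same conclusion, provided you note that the tuples in $\mathcal{G}_1^\ell$ corresponding to $\alpha_{\Lambda_\Gamma\mathcal{G}}^{-1}(K)$ form a closed subset of $\big((s,t)^{-1}(K\times K)\big)^\ell$ (which holds since the structure maps are continuous and the spaces Hausdorff).

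The genuine gap is the step you yourself flag as ``the main obstacle'': gluing the local semialgebraic models into a global affine definable structure on $|\Lambda_\Gamma\mathcal{G}|$ for which $|\alpha_{\Lambda_\Gamma\mathcal{G}}|$ is a morphism. You do not carry this out, and the route you sketch --- refining triangulations so that $|\alpha_{\Lambda_\Gamma\mathcal{G}}|$ becomes simplicial --- asks for a simultaneous triangulation of a map, which is stronger than what is needed and is not how the paper proceeds. The missing ingredient is the Delfs--Knebusch theory of (locally) semialgebraic spaces: once $|\Lambda_\Gamma\mathcal{G}|$ is exhibited as a compact Hausdorff space covered by finitely many semialgebraic charts (each $\pi_\Lambda(\HOM(\Gamma,\mathcal{G}_{|U_x^\prime}))$ being semialgebraic by the Brumfiel quotient theorem), it is a compact Hausdorff, hence regular, semialgebraic space in their sense, and their triangulation theorem yields a finite triangulation compatible with any prescribed finite family of semialgebraic subsets --- here, the preimages of the weak orbit types of $|\mathcal{G}|$, identified locally inside $\mathcal{G}_x^x\backslash\big((\mathcal{G}_x^x)^\ell\times V_x\big)$ via $V_x\cong\{e\}\times V_x$. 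This is precisely the tool that resolves the ``global coherence across overlapping charts'' you worry about, because the semialgebraic structure is coherent by construction and the map $|\alpha_{\Lambda_\Gamma\mathcal{G}}|$ is semialgebraic in each chart by the equivariant-quotient argument (Lemma~\ref{lem:SemialgEquivarFunc}). Embedding the resulting finite simplicial complexes linearly then produces $\iota_{|\mathcal{G}|}$ and $\iota_{|\Lambda_\Gamma\mathcal{G}|}$ with $|\alpha_{\Lambda_\Gamma\mathcal{G}}|$ a morphism of affine definable spaces; no simpliciality of the map is required. Without citing (or reproving) a triangulation/affineness result of this kind for semialgebraic spaces assembled from charts, your argument does not close.
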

\begin{proof}
Let $x\in\mathcal{G}_0$. By the slice theorem \cite[Corollary~3.11]{PPTOrbitSpace}, there is a neighborhood
$U_x$ of $x$ in $\mathcal{G}_0$ diffeomorphic to $O_x\times V_x$ where $V_x$ is an invariant open neighborhood
of the origin in a linear representation of $\mathcal{G}_x^x$ and $O_x$ is an open neighborhood of $x$ in its
orbit, such that $\mathcal{G}_{|U_x}$ is isomorphic to the product of $\mathcal{G}_x^x\ltimes V_x$ and the pair
groupoid on $O_x$. Choosing a presentation $\Gamma = \langle\gamma_1,\ldots,\gamma_\ell\mid R_1,\ldots,R_k\rangle$
of $\Gamma$, it follows as in Remark~\ref{rem:InertiaTranslation} that the open subset
$\HOM(\Gamma,\mathcal{G}_{|U_x})$ of $\HOM(\Gamma,\mathcal{G})$ given by
those $\phi$ such that $\phi_0\in U_x$ can be identified via $\phi\mapsto  (\phi(\gamma_1),\ldots,\phi(\gamma_r))$
with the set of points $(g_1,\ldots,g_\ell,v,o)\in (\mathcal{G}_x^x)^\ell\times V_x\times O_x$ such that the
$g_i$ satisfy the algebraic relations $R_j$ and $g_i(v) = v$ for each $i$. Recalling that
$\pi_\Lambda\co\HOM(\Gamma,\mathcal{G})\to|\Lambda_\Gamma\mathcal{G}|$ denotes the orbit map,
it follows that $\pi_\Lambda(\HOM(\Gamma,\mathcal{G}_{|U_x}))$ is homeomorphic to the quotient of
an algebraic subset of $(\mathcal{G}_x^x)^\ell\times V_x$ by the diagonal of the linear
$\mathcal{G}_x^x$-action on $V_x$ and component-wise conjugation on $(\mathcal{G}_x^x)^\ell$.
Then by \cite[Corollary~1.6]{BrumfielQuot}, $\pi_\Lambda(\HOM(\Gamma,\mathcal{G}_{|U_x}))$ is a
semialgebraic set and hence definable.

For each orbit in $|\mathcal{G}|$, choose a representative $x$ from the orbit and a set $U_x$ as above.
Via the identification of $U_x$ with $V_x\times O_x$, let $U_x^\prime$ be a $\mathcal{G}_x^x$-invariant
open neighborhood of $x$ such that $\overline{U_x^\prime}\subset U_x$. Then as $|\mathcal{G}|$ is compact,
it can be covered by finitely many $\pi(U_x^\prime)$ so that $|\Lambda_\Gamma\mathcal{G}|$ is covered by
finitely many $\pi_\Lambda(\HOM(\Gamma,\mathcal{G}_{|U_x^\prime}))$. As each
$\pi_\Lambda(\HOM(\Gamma,\mathcal{G}_{|\overline{U_x^\prime}}))$ is compact,
$|\Lambda_\Gamma\mathcal{G}|$ is compact as well. Then $|\Lambda_\Gamma\mathcal{G}|$
is a compact Hausdorff (hence regular) semialgebraic space in the sense of \cite[Section~7, Definition~3]{DelfsKnebuschSTII},
hence a locally semialgebraic space in the sense of \cite[Section~1, Definition~3]{DelfsKnebuschIntro}.
In each $\pi_\Lambda(\HOM(\Gamma,\mathcal{G}_{|U_x^\prime}))$, $|\alpha_{\Lambda_\Gamma\mathcal{G}}|$ corresponds
to the restriction of the semialgebraic function
$\mathcal{G}_x^x\backslash\big((\mathcal{G}_x^x)^\ell\times V_x\big)\to \mathcal{G}_x^x\backslash V_x$
induced by the projection $(\mathcal{G}_x^x)^\ell\times V_x \to V_x$ so that the induced map
$|\alpha_{\Lambda_\Gamma\mathcal{G}}|_{|\pi_\Lambda(\HOM(\Gamma,\mathcal{G}_{|U_x^\prime}))}$ is semialgebraic by Lemma~\ref{lem:SemialgEquivarFunc}.
Identifying $V_x$ with $\{e\}\times V_x\subset(\mathcal{G}_x^x)^\ell\times V_x$ and hence each weak orbit type in
$\mathcal{G}_x^x\backslash V_x$ with a semialgebraic subspace of $\mathcal{G}_x^x\backslash\big((\mathcal{G}_x^x)^\ell\times V_x\big)$,
by \cite[Theorem~3.2]{DelfsKnebuschIntro}, $|\Lambda_\Gamma\mathcal{G}|$ admits a locally finite,
hence finite, triangulation such that each weak orbit type in $|\mathcal{G}|$ is a subcomplex. Using these triangulations to define respective
embeddings $\iota_{|\Lambda_\Gamma\mathcal{G}|}$ and $\iota_{|\mathcal{G}|}$ of $|\Lambda_\Gamma\mathcal{G}|$ and $|\mathcal{G}|$
as semialgebraic subspaces of Euclidean space as in the proof of Proposition~\ref{prop:LieGpdOrbitDefin}, it follows that
$|\alpha_{\Lambda_\Gamma\mathcal{G}}|$ is a morphism of the resulting affine definable spaces.
\end{proof}

With this, the application of Fubini's theorem to $|\alpha_{\Lambda_\Gamma\mathcal{G}}|$ yields the following.

\begin{theorem}[($\chi_\Gamma$ and $\Lambda\chi_\Gamma$ coincide)]
\label{thrm:GamECInertia}
If $\Gamma$ is a finitely presented discrete group and $\mathcal{G}$ is a $\Gamma$-inertia definable groupoid, then
\[
    \chi_\Gamma(\mathcal{G})
    = \Lambda\chi_\Gamma(\mathcal{G}).
\]
\end{theorem}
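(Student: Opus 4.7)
The plan is to apply Fubini's Theorem (Theorem~\ref{thrm:Fubini}) to the morphism of affine definable spaces $|\alpha_{\Lambda_\Gamma\mathcal{G}}|\co|\Lambda_\Gamma\mathcal{G}|\to|\mathcal{G}|$, which is a morphism by the very definition of $\Gamma$-inertia definability, with the constant constructible function $f\equiv 1$. This immediately produces the identity
\[
    \Lambda\chi_\Gamma(\mathcal{G})
    = \chi(|\Lambda_\Gamma\mathcal{G}|)
    = \int_{|\mathcal{G}|}\chi\bigl(|\alpha_{\Lambda_\Gamma\mathcal{G}}|^{-1}(\mathcal{G}x)\bigr)\,d\chi(\mathcal{G}x).
\]
Comparing with Equation~\eqref{eq:GammaECGroupoid} defining $\chi_\Gamma(\mathcal{G})$, the task reduces to showing that the two integrands agree, and by the homeomorphism invariance of $\chi$ (Theorem~\ref{thrm:ECHomeoInvar}) it suffices to produce a homeomorphism $|\alpha_{\Lambda_\Gamma\mathcal{G}}|^{-1}(\mathcal{G}x)\cong\mathcal{G}_x^x\backslash\HOM(\Gamma,\mathcal{G}_x^x)$ for each $x\in\mathcal{G}_0$.

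To build this homeomorphism, I note that $\HOM(\Gamma,\mathcal{G}_x^x) = \alpha_{\Lambda_\Gamma\mathcal{G}}^{-1}(\{x\})\subseteq\HOM(\Gamma,\mathcal{G})$ inherits the subspace topology, and the inclusion is $\mathcal{G}_x^x$-equivariant. Composing with the orbit map and passing to the quotient gives a continuous map
\[
    \rho_x\co\mathcal{G}_x^x\backslash\HOM(\Gamma,\mathcal{G}_x^x)\longrightarrow |\alpha_{\Lambda_\Gamma\mathcal{G}}|^{-1}(\mathcal{G}x),
        \qquad \mathcal{G}_x^x\phi\longmapsto \Lambda_\Gamma\mathcal{G}\phi.
\]
I expect $\rho_x$ to be a set-theoretic bijection by routine orbit chasing: surjectivity holds because any $\phi^\prime\in\HOM(\Gamma,\mathcal{G})$ with $\phi_0^\prime\in\mathcal{G}x$ can be conjugated via an arrow $h$ with $s(h)=x$ and $t(h)=\phi_0^\prime$ into $\HOM(\Gamma,\mathcal{G}_x^x)$, while injectivity holds because any arrow $g$ conjugating two elements $\phi,\phi^\prime\in\HOM(\Gamma,\mathcal{G}_x^x)$ must satisfy $s(g)=x=t(g)$, hence $g\in\mathcal{G}_x^x$.

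The hard part will be promoting the continuous bijection $\rho_x$ to a homeomorphism, since the quotient topology on the domain need not a priori match the subspace topology on the fiber inherited from $|\Lambda_\Gamma\mathcal{G}|$. My strategy is to invoke the classical fact that a continuous bijection from a compact space to a Hausdorff space is automatically a homeomorphism. Compactness of the domain follows from Definition~\ref{def:OrbDefinGpoid}(ii): $\mathcal{G}_x^x$ is a compact Lie group, so $(\mathcal{G}_x^x)^\ell$ is compact, the subspace $\HOM(\Gamma,\mathcal{G}_x^x)$ is closed as it is cut out by the relations of Notation~\ref{not:FinitePres}, and its image under the quotient by a compact group action is again compact. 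Hausdorffness of the codomain is immediate from its being a subspace of the affine definable space $|\Lambda_\Gamma\mathcal{G}|\subseteq\R^n$. This argument has the virtue of sidestepping any need for a properness hypothesis on $\mathcal{G}$, which is not part of the orbit space definability assumption; once the homeomorphism is in hand, combining the fiber identification with the Fubini identity above yields the theorem.
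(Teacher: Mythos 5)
Your proposal is correct and follows essentially the same route as the paper: apply Fubini's Theorem to $|\alpha_{\Lambda_\Gamma\mathcal{G}}|$ with the constant function $1$ and identify each fiber $|\alpha_{\Lambda_\Gamma\mathcal{G}}|^{-1}(\mathcal{G}x)$ with $\mathcal{G}_x^x\backslash\!\HOM(\Gamma,\mathcal{G}_x^x)$. The paper asserts this fiber identification without comment, whereas you supply the orbit-chasing bijection and the compact-to-Hausdorff argument that upgrades it to a homeomorphism; that is a worthwhile bit of added rigor but not a different proof.
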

\begin{proof}
We apply Fubini's theorem (Theorem~\ref{thrm:Fubini}) to the morphism of affine definable spaces
$|\alpha_{\Lambda_\Gamma\mathcal{G}}|\co|\Lambda_\Gamma\mathcal{G}| \to |\mathcal{G}|$ to yield
\begin{align*}
    \chi(|\Lambda_\Gamma\mathcal{G}|)
    &=          \int_{|\Lambda_\Gamma\mathcal{G}|} 1 \,d\chi(\mathcal{G}\phi)
    \\&=        \int_{|\mathcal{G}|}
                    \left(  \int_{|\alpha_{\Lambda_\Gamma\mathcal{G}}|^{-1}(\mathcal{G}x)} 1\,d\chi(\mathcal{G}\phi)  \right) \,d\chi(\mathcal{G}x)
    \\&=        \int_{|\mathcal{G}|}
                    \chi\big( \mathcal{G}_x^x\backslash\!\HOM(\Gamma,\mathcal{G}_x^x)\big)
                    \,d\chi(\mathcal{G}x)
    \\&=        \chi_\Gamma(\mathcal{G}).
    \qedhere
\end{align*}
\end{proof}

It follows that when $\mathcal{G}$ is an orbifold groupoid presenting the orbifold $Q$, the
$\chi_\Gamma(\mathcal{G})$ coincide with the $\chi_\Gamma(Q)$ defined in \cite{FarsiSeaGenOrbEuler} and
recalled in Section~\ref{subsec:BackOrbEuler}.

If $A$ is a locally closed semialgebraic set and $\Bbbk$ is a field, the Borel--Moore homology groups $H_i^{\operatorname{BM}}(A;\Bbbk)$
can be defined in terms of the simplicial homology groups $H_i(A;\Bbbk)$. If $A$ is compact, then
$H_i^{\operatorname{BM}}(A;\Bbbk) = H_i(A;\Bbbk)$, and if $A$ is not compact, then $H_i^{\operatorname{BM}}(A;\Bbbk)$
is defined using relative homology and a compactification of $A$; see \cite[Definition~11.7.13]{BochnakCosteRoyBook} and \cite{BorelMoore}
for the definition in a more general setting. If $A$ is a locally compact semialgebraic set, then it is demonstrated in
\cite[Section~1.8]{CosteRealAlgSets} that the Euler characteristic $\chi(A)$ is equal to the Euler characteristic of the
Borel--Moore homology of $A$ with coefficients in $\Z/2\Z$, i.e., the alternating sum of the Betti numbers.
Applying this fact and Theorem~\ref{thrm:GamECInertia}, we have the following.

\begin{corollary}
\label{cor:BettiEuler}
If $\Gamma$ is a finitely presented discrete group and $\mathcal{G}$ is a $\Gamma$-inertia definable groupoid
such that $|\Lambda_\Gamma\mathcal{G}|$ is a locally compact semialgebraic set, then $\Lambda\chi_\Gamma(\mathcal{G})$
and $\chi_\Gamma(\mathcal{G})$ are equal to the Euler characteristic of the Borel--Moore homology of $|\Lambda_\Gamma\mathcal{G}|$
with coefficients in $\Z/2\Z$. If $|\Lambda_\Gamma\mathcal{G}|$ is in addition compact,
then $\Lambda\chi_\Gamma(\mathcal{G})$ and $\chi_\Gamma(\mathcal{G})$ are equal to the Euler characteristic of the usual homology of
$|\Lambda_\Gamma\mathcal{G}|$.
\end{corollary}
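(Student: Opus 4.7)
The plan is to assemble the corollary directly from the pieces already in hand, since each hypothesis is tailored to invoke exactly one prior result. First I would observe that the $\Gamma$-inertia definability hypothesis guarantees that both $\chi_\Gamma(\mathcal{G})$ and $\Lambda\chi_\Gamma(\mathcal{G})$ are defined (see Remark~\ref{rem:GamInertiaDefinableGOrbitSpaceDef}), and that Theorem~\ref{thrm:GamECInertia} already establishes
\[
    \chi_\Gamma(\mathcal{G}) = \Lambda\chi_\Gamma(\mathcal{G}) = \chi\big(|\Lambda_\Gamma\mathcal{G}|\big),
\]
where the last equality is the definition of $\Lambda\chi_\Gamma$ in Definition~\ref{def:GamInertiaDefinable}(ii). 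Thus the only thing left to verify is that, under the locally compact semialgebraic hypothesis, $\chi\big(|\Lambda_\Gamma\mathcal{G}|\big)$ agrees with the alternating sum of the Borel--Moore Betti numbers with $\Z/2\Z$ coefficients, and that in the compact case this in turn agrees with the alternating sum of the ordinary Betti numbers.

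For the first agreement, I would directly quote the result from \cite[Section~1.8]{CosteRealAlgSets}, which asserts precisely this identity for any locally compact semialgebraic set $A$: the o-minimal Euler characteristic defined via cell decompositions (Section~\ref{subsec:BackDefin}) coincides with $\sum_i (-1)^i \dim_{\Z/2\Z} H_i^{\operatorname{BM}}(A;\Z/2\Z)$. By Remark~\ref{reem:ECHomeoInvar} this interpretation of $\chi\big(|\Lambda_\Gamma\mathcal{G}|\big)$ is independent of the chosen embedding $\iota_{|\Lambda_\Gamma\mathcal{G}|}$, so applying Coste's identity to the semialgebraic set $\iota_{|\Lambda_\Gamma\mathcal{G}|}(|\Lambda_\Gamma\mathcal{G}|)$ yields the first statement of the corollary.

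For the second agreement, I would invoke the standard fact, recalled just before the corollary in the paper (and given e.g.\ in \cite[Definition~11.7.13]{BochnakCosteRoyBook}), that when $A$ is compact one has $H_i^{\operatorname{BM}}(A;\Z/2\Z) = H_i(A;\Z/2\Z)$; combining with the first part, $\chi\big(|\Lambda_\Gamma\mathcal{G}|\big)$ then equals the Euler characteristic of the ordinary simplicial (equivalently singular) homology of $|\Lambda_\Gamma\mathcal{G}|$. Stringing together the two chains of equalities gives both claims. There is no real obstacle here: the corollary is essentially a bookkeeping step combining Theorem~\ref{thrm:GamECInertia}, the definition of $\Lambda\chi_\Gamma$, and the cited comparison between the o-minimal Euler characteristic and Borel--Moore homology; the only care required is to verify that the semialgebraic hypothesis on $|\Lambda_\Gamma\mathcal{G}|$ is compatible with the (possibly larger) o-minimal structure implicit in $\chi$, which is immediate from Theorem~\ref{thrm:ECHomeoInvar} since homeomorphic definable sets (in any two o-minimal structures containing the semialgebraic sets) share the same Euler characteristic.
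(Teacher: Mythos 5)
Your proposal is correct and follows essentially the same route as the paper: the corollary is obtained by combining Theorem~\ref{thrm:GamECInertia} and the definition $\Lambda\chi_\Gamma(\mathcal{G}) = \chi\big(|\Lambda_\Gamma\mathcal{G}|\big)$ with the identification from \cite[Section~1.8]{CosteRealAlgSets} of the o-minimal Euler characteristic of a locally compact semialgebraic set with the Euler characteristic of its Borel--Moore homology over $\Z/2\Z$, and with the agreement of Borel--Moore and ordinary homology in the compact case. Your added remark that Theorem~\ref{thrm:ECHomeoInvar} reconciles the semialgebraic hypothesis with the ambient o-minimal structure is a sensible precaution, consistent with the paper's implicit use of that invariance.
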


The Morita equivalence class of $\Lambda_\Gamma\mathcal{G}$ as a topological groupoid depends only on the Morita equivalence
class of $\mathcal{G}$ by Lemma~\ref{lem:InertHomomorph}. The orbit spaces of Morita equivalent
groupoids are homeomorphic so that the topological space $|\Lambda_\Gamma\mathcal{G}|$ depends only on the (topological) Morita equivalence
class of $\mathcal{G}$. By Theorem~\ref{thrm:ECHomeoInvar}, $\Lambda\chi_\Gamma(\mathcal{G})$ depends only on the Morita equivalence
class of $\mathcal{G}$, and then the same holds for $\chi_\Gamma(\mathcal{G})$ by Theorem~\ref{thrm:GamECInertia}. That is, we have the following.

\begin{corollary}[(Morita invariance of $\chi_\Gamma$)]
\label{cor:ChiMoritaInvar}
If $\Gamma$ is a finitely presented discrete group and $\mathcal{G}$ is a $\Gamma$-inertia definable groupoid, then
$\Lambda\chi_\Gamma(\mathcal{G})$ and $\chi_\Gamma(\mathcal{G})$ depend only on the Morita equivalence class of $\mathcal{G}$
as a topological groupoid.
\end{corollary}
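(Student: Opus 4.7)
The proof is a bookkeeping assembly of results already in place, and I would execute it as follows.

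First I would transfer Morita equivalence from $\mathcal{G}$ to its $\Gamma$-inertia groupoid by invoking Lemma~\ref{lem:InertHomomorph}. Given two $\Gamma$-inertia definable groupoids $\mathcal{G}$ and $\mathcal{H}$ that are Morita equivalent as topological groupoids, there is a zigzag $\mathcal{G}\leftarrow\mathcal{K}\rightarrow\mathcal{H}$ of essential equivalences. Applying the functor $\Lambda_\Gamma$ to each leg of this zigzag yields essential equivalences $\Lambda_\Gamma\mathcal{G}\leftarrow\Lambda_\Gamma\mathcal{K}\rightarrow\Lambda_\Gamma\mathcal{H}$, exhibiting a Morita equivalence of topological groupoids between $\Lambda_\Gamma\mathcal{G}$ and $\Lambda_\Gamma\mathcal{H}$.

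Next I would pass from Morita equivalence to the orbit spaces. It is standard that any essential equivalence of topological groupoids induces a homeomorphism on orbit spaces: the defining condition on $t_\mathcal{H}\circ\pr_1\co\mathcal{H}_1\ftimes{s_{\mathcal{H}}}{\Phi_0}\mathcal{G}_0\to\mathcal{H}_0$ ensures that $\Phi_0$ descends to a surjection of orbit spaces, while the fibred product condition on arrows enforces injectivity and continuity of the inverse. Consequently $|\Lambda_\Gamma\mathcal{G}|$ and $|\Lambda_\Gamma\mathcal{H}|$ are homeomorphic as topological spaces.

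Finally I would conclude with Beke's theorem. Since both $\mathcal{G}$ and $\mathcal{H}$ are $\Gamma$-inertia definable, both $|\Lambda_\Gamma\mathcal{G}|$ and $|\Lambda_\Gamma\mathcal{H}|$ are affine definable spaces, and Theorem~\ref{thrm:ECHomeoInvar} guarantees that their Euler characteristics depend only on the underlying topological space and not on the choice of embedding or o-minimal structure. Combining this with the homeomorphism just obtained gives $\Lambda\chi_\Gamma(\mathcal{G}) = \chi(|\Lambda_\Gamma\mathcal{G}|) = \chi(|\Lambda_\Gamma\mathcal{H}|) = \Lambda\chi_\Gamma(\mathcal{H})$, and applying Theorem~\ref{thrm:GamECInertia} on both sides transfers this equality to $\chi_\Gamma(\mathcal{G}) = \chi_\Gamma(\mathcal{H})$. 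There is no genuine obstacle in this argument, since the substantive content, namely that $\Lambda_\Gamma$ preserves essential equivalences and that $\chi_\Gamma$ agrees with $\Lambda\chi_\Gamma$, has already been established; the present corollary is merely the packaging of these ingredients together with Beke's homeomorphism invariance.
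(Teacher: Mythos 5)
Your proposal is correct and follows essentially the same route as the paper: apply Lemma~\ref{lem:InertHomomorph} to the zigzag of essential equivalences, use the homeomorphism of orbit spaces of Morita equivalent groupoids, invoke Theorem~\ref{thrm:ECHomeoInvar} for $\Lambda\chi_\Gamma$, and transfer to $\chi_\Gamma$ via Theorem~\ref{thrm:GamECInertia}. The paper's argument is the same assembly of these ingredients, stated more briefly.
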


Now, suppose $\Gamma$ is a finitely presented discrete group and $\mathcal{G}$ is a $\Gamma$-inertia
definable groupoid. Recall that $\pi = \pi_{\mathcal{G}}\co\mathcal{G}_0\to|\mathcal{G}|$ denotes the
quotient map. If $U\subseteq\mathcal{G}_0$ is such that $\pi(U)$ is a definable subset of $|\mathcal{G}|$,
then $|\alpha_{\Lambda_\Gamma\mathcal{G}}|^{-1}(\pi(U))$ is a definable subset of $|\Lambda_\Gamma\mathcal{G}|$.
It follows that $\mathcal{G}_{|U}$ is as well $\Gamma$-inertia definable via the restrictions of the embeddings
$\iota_{|\mathcal{G}|}$ and $\iota_{|\Lambda_\Gamma\mathcal{G}|}$. Then using
Corollary~\ref{cor:InertRestrictionEssEquiv} and Theorem~\ref{thrm:GamECInertia}, the additivity
of $\chi$ extends to $\chi_\Gamma$ and $\Lambda\chi_\Gamma$ as follows.

\begin{corollary}[(Additivity of $\chi_\Gamma$ and $\Lambda\chi_\Gamma$)]
\label{cor:GamECAdditive}
Let $\Gamma$ be a finitely presented discrete group and $\mathcal{G}$ a $\Gamma$-inertia definable groupoid.
If $S, T\subseteq|\mathcal{G}|$ are definable
subsets such that $S\cup T = |\mathcal{G}|$ and $U,V,W\subseteq\mathcal{G}_0$ are such that the inclusions
$\mathcal{G}_{|U}\to\mathcal{G}_{|\pi^{-1}(S)}$, $\mathcal{G}_{|V}\to\mathcal{G}_{|\pi^{-1}(T)}$, and
$\mathcal{G}_{|W}\to\mathcal{G}_{|\pi^{-1}(S\cap T)}$ are essential equivalences of topological groupoids,
then
\begin{equation}
\label{eq:GamECAdditive}
    \chi_\Gamma(\mathcal{G})
        =   \chi_\Gamma(\mathcal{G}_{|U}) + \chi_\Gamma(\mathcal{G}_{|V})
            - \chi_\Gamma(\mathcal{G}_{|W})
\end{equation}
and
\begin{equation}
\label{eq:GamECAdditiveLambda}
    \Lambda\chi_\Gamma(\mathcal{G})
        =   \Lambda\chi_\Gamma(\mathcal{G}_{|U}) + \Lambda\chi_\Gamma(\mathcal{G}_{|V})
            - \Lambda\chi_\Gamma(\mathcal{G}_{|W})
\end{equation}
\end{corollary}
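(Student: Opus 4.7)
The plan is to reduce \eqref{eq:GamECAdditive} to \eqref{eq:GamECAdditiveLambda} via Theorem~\ref{thrm:GamECInertia}, and then prove the latter by realizing each of the four Euler characteristics appearing in it as the Euler characteristic of a definable subset of $|\Lambda_\Gamma\mathcal{G}|$, at which point finite additivity of $\chi$ on definable sets closes the argument.

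First I would observe that the three essential equivalences appearing in the hypothesis force $\Sat(U) = \pi^{-1}(S)$, $\Sat(V) = \pi^{-1}(T)$, and $\Sat(W) = \pi^{-1}(S\cap T)$, since an essential equivalence induces a homeomorphism on orbit spaces and the targets $\pi^{-1}(S)$, $\pi^{-1}(T)$, $\pi^{-1}(S\cap T)$ are saturated. Corollary~\ref{cor:InertRestrictionEssEquiv} then furnishes homeomorphisms
\[
    |\Lambda_\Gamma(\mathcal{G}_{|U})|\;\cong\;|\alpha_{\Lambda_\Gamma\mathcal{G}}|^{-1}(S),
    \qquad
    |\Lambda_\Gamma(\mathcal{G}_{|V})|\;\cong\;|\alpha_{\Lambda_\Gamma\mathcal{G}}|^{-1}(T),
\]
and likewise $|\Lambda_\Gamma(\mathcal{G}_{|W})|\cong|\alpha_{\Lambda_\Gamma\mathcal{G}}|^{-1}(S\cap T)$. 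Because $|\alpha_{\Lambda_\Gamma\mathcal{G}}|$ is a morphism of affine definable spaces by the $\Gamma$-inertia definability hypothesis, and $S$, $T$, $S\cap T$ are definable subsets of $|\mathcal{G}|$, Remark~\ref{rem:DefSpacesOperations} shows each of these preimages is a definable subset of $|\Lambda_\Gamma\mathcal{G}|$. Transporting the embeddings $\iota_{|\mathcal{G}|}$ and $\iota_{|\Lambda_\Gamma\mathcal{G}|}$ to the appropriate restrictions makes $\mathcal{G}_{|U}$, $\mathcal{G}_{|V}$, $\mathcal{G}_{|W}$ into $\Gamma$-inertia definable groupoids in their own right, and Theorem~\ref{thrm:ECHomeoInvar} then identifies each $\Lambda\chi_\Gamma(\mathcal{G}_{|\bullet})$ with $\chi\bigl(|\alpha_{\Lambda_\Gamma\mathcal{G}}|^{-1}(\bullet)\bigr)$.

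Since $S\cup T=|\mathcal{G}|$, the two definable subsets $|\alpha_{\Lambda_\Gamma\mathcal{G}}|^{-1}(S)$ and $|\alpha_{\Lambda_\Gamma\mathcal{G}}|^{-1}(T)$ cover $|\Lambda_\Gamma\mathcal{G}|$ with intersection $|\alpha_{\Lambda_\Gamma\mathcal{G}}|^{-1}(S\cap T)$. Applying the inclusion--exclusion identity $\chi(A\cup B)=\chi(A)+\chi(B)-\chi(A\cap B)$, which is a direct consequence of the finite additivity of $\chi$ on definable sets recalled in Section~\ref{subsec:BackDefin} together with the decomposition $A\cup B = A\sqcup(B\setminus A)$, yields \eqref{eq:GamECAdditiveLambda}. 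A single invocation of Theorem~\ref{thrm:GamECInertia} applied to each of the four terms then converts \eqref{eq:GamECAdditiveLambda} into \eqref{eq:GamECAdditive}.

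The only real subtlety I anticipate is verifying that $\mathcal{G}_{|U}$, $\mathcal{G}_{|V}$, $\mathcal{G}_{|W}$ inherit $\Gamma$-inertia definable structures compatible with the homeomorphism of Corollary~\ref{cor:InertRestrictionEssEquiv}, so that Theorem~\ref{thrm:GamECInertia} legitimately applies to them; this amounts to restricting the chosen embeddings to the definable subsets $S$, $T$, $S\cap T$ of $|\mathcal{G}|$ and their preimages in $|\Lambda_\Gamma\mathcal{G}|$, and is essentially bookkeeping once one observes via Lemma~\ref{lem:OrbDefinGpoidConstructions} that restriction to a preimage of a definable subset preserves the orbit space definable structure.
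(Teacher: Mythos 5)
Your proposal is correct and follows essentially the same route as the paper's proof: identifying each $|\Lambda_\Gamma(\mathcal{G}_{|\bullet})|$ with the definable preimage $|\alpha_{\Lambda_\Gamma\mathcal{G}}|^{-1}(\bullet)$ via Corollary~\ref{cor:InertRestrictionEssEquiv}, applying finite additivity (inclusion--exclusion) of $\chi$ on definable sets to obtain \eqref{eq:GamECAdditiveLambda}, and then converting to \eqref{eq:GamECAdditive} with Theorem~\ref{thrm:GamECInertia}. The ``subtlety'' you flag at the end is exactly the verification the paper carries out, so nothing is missing.
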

\begin{proof}
Note that the hypotheses imply $\pi(U) = S$, $\pi(V) = T$, and $\pi(W) = S\cap T$, and moreover that
$\mathcal{G}_{|U}$, $\mathcal{G}_{|V}$, and $\mathcal{G}_{|W}$ are orbit space definable. Hence via the
homeomorphisms given in Corollary~\ref{cor:InertRestrictionEssEquiv}, we can identify
$|\Lambda_\Gamma(\mathcal{G}_{|U})| = |\alpha_{\Lambda_\Gamma\mathcal{G}}|^{-1}(S)$,
$|\Lambda_\Gamma(\mathcal{G}_{|V})| = |\alpha_{\Lambda_\Gamma\mathcal{G}}|^{-1}(T)$, and
$|\Lambda_\Gamma(\mathcal{G}_{|W})| = |\alpha_{\Lambda_\Gamma\mathcal{G}}|^{-1}(S\cap T)$.
In particular, as $S$, $T$, and hence $S\cap T$ are definable subsets of $|\mathcal{G}|$, their preimages under the morphism
of affine definable spaces $|\alpha_{\Lambda_\Gamma\mathcal{G}}|$ are definable subsets of $|\Lambda_\Gamma(\mathcal{G})|$, so each of the
corresponding restrictions of $|\alpha_{\Lambda_\Gamma\mathcal{G}}|$ are as well morphisms of affine definable spaces. That is,
$\mathcal{G}_{|U}$, $\mathcal{G}_{|V}$, and $\mathcal{G}_{|W}$ are $\Gamma$-inertia
definable, and the homeomorphisms given by Corollary~\ref{cor:InertRestrictionEssEquiv} are morphisms of affine definable spaces. Then
$|\Lambda_\Gamma\mathcal{G}| = |\alpha_{\Lambda_\Gamma\mathcal{G}}|^{-1}(S) \cup |\alpha_{\Lambda_\Gamma\mathcal{G}}|^{-1}(T)$
and
$|\alpha_{\Lambda_\Gamma\mathcal{G}}|^{-1}(S)\cap|\alpha_{\Lambda_\Gamma\mathcal{G}}|^{-1}(T) = |\alpha_{\Lambda_\Gamma\mathcal{G}}|^{-1}(S\cap T)$
so that by the additivity of $\chi$, Equation~\eqref{eq:GamECAdditiveLambda} follows. Applying Theorem~\ref{thrm:GamECInertia} yields
Equation~\eqref{eq:GamECAdditive}.
\end{proof}

Note that in Corollary~\ref{cor:GamECAdditive}, sets $U$, $V$, and $W$ of course always exist;
we can take $U = \pi^{-1}(S)$, $V = \pi^{-1}(T)$, and $W = \pi^{-1}(S\cap T)$ so that the
essential equivalences are isomorphisms.

Finally, we have that $\chi_\Gamma$ and $\Lambda\chi_\Gamma$ are also multiplicative; see also
\cite[Lemma~3.1]{GZEMH-HigherOrbEulerCompactGroup}.

\begin{lemma}[(Multiplicativity of $\chi_\Gamma$ and $\Lambda\chi_\Gamma$)]
\label{lem:GamECMultiplic}
Let $\Gamma$ be a finitely presented discrete group and let $\mathcal{G}$ and
$\mathcal{H}$ be $\Gamma$-inertia definable groupoids. Then $\mathcal{G}\times\mathcal{H}$
is $\Gamma$-inertia definable,
\[
    \chi_\Gamma(\mathcal{G}\times\mathcal{H})
        =   \chi_\Gamma(\mathcal{G})\chi_\Gamma(\mathcal{H}).
\]
and
\[
    \Lambda\chi_\Gamma(\mathcal{G}\times\mathcal{H})
        =   \Lambda\chi_\Gamma(\mathcal{G})\Lambda\chi_\Gamma(\mathcal{H}).
\]
\end{lemma}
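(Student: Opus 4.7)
The plan is to prove the statement by reducing both claims to the fact that the $\Gamma$-inertia construction commutes with taking products of topological groupoids and then applying the multiplicativity of the Euler characteristic for definable sets together with Theorem~\ref{thrm:GamECInertia}.

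First I would observe that $\mathcal{G}\times\mathcal{H}$ is an orbit space definable groupoid by Lemma~\ref{lem:OrbDefinGpoidConstructions}(i), with product embedding $\iota_{|\mathcal{G}|}\times\iota_{|\mathcal{H}|}$. Next, because the unique object of $\Gamma$ forces a groupoid homomorphism $\Gamma\to\mathcal{G}\times\mathcal{H}$ to be determined by its two projections, the map $\phi\mapsto(\mathrm{pr}_\mathcal{G}\circ\phi,\mathrm{pr}_\mathcal{H}\circ\phi)$ is a homeomorphism
\[
    \HOM(\Gamma,\mathcal{G}\times\mathcal{H})\xrightarrow{\;\cong\;}\HOM(\Gamma,\mathcal{G})\times\HOM(\Gamma,\mathcal{H})
\]
with respect to the compact-open topology, where continuity in both directions is straightforward from the definition of the subbase. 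Since the conjugation action of $(g,h)\in(\mathcal{G}\times\mathcal{H})_1$ on a pair $(\phi,\psi)$ is just componentwise conjugation, this homeomorphism intertwines the actions, and so it induces an isomorphism of topological groupoids
\[
    \Lambda_\Gamma(\mathcal{G}\times\mathcal{H})\cong \Lambda_\Gamma\mathcal{G}\times\Lambda_\Gamma\mathcal{H}.
\]
Passing to orbit spaces yields a canonical homeomorphism $|\Lambda_\Gamma(\mathcal{G}\times\mathcal{H})|\cong|\Lambda_\Gamma\mathcal{G}|\times|\Lambda_\Gamma\mathcal{H}|$, and under this identification the anchor map $|\alpha_{\Lambda_\Gamma(\mathcal{G}\times\mathcal{H})}|$ becomes the product $|\alpha_{\Lambda_\Gamma\mathcal{G}}|\times|\alpha_{\Lambda_\Gamma\mathcal{H}}|$.

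I would then equip $|\Lambda_\Gamma(\mathcal{G}\times\mathcal{H})|$ with the product embedding $\iota_{|\Lambda_\Gamma\mathcal{G}|}\times\iota_{|\Lambda_\Gamma\mathcal{H}|}$. The product of affine definable spaces is affine definable (Remark~\ref{rem:DefSpacesOperations}), and the product of two morphisms of affine definable spaces is again a morphism, so the anchor map for the product is a morphism of affine definable spaces. Thus $\mathcal{G}\times\mathcal{H}$ is $\Gamma$-inertia definable, establishing the first claim.

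For the multiplicativity of $\Lambda\chi_\Gamma$, I would invoke the standard product formula $\chi(A\times B)=\chi(A)\chi(B)$ for affine definable spaces (which follows from the cell decomposition, since every cell decomposition of $A\times B$ can be refined by products of cells of $A$ and $B$, and a product of cells of dimensions $d_1,d_2$ is a cell of dimension $d_1+d_2$ whose Euler characteristic is $(-1)^{d_1+d_2}=(-1)^{d_1}(-1)^{d_2}$, together with finite additivity of $\chi$). Applied to $|\Lambda_\Gamma\mathcal{G}|\times|\Lambda_\Gamma\mathcal{H}|$, this gives
\[
    \Lambda\chi_\Gamma(\mathcal{G}\times\mathcal{H})=\chi\bigl(|\Lambda_\Gamma\mathcal{G}|\times|\Lambda_\Gamma\mathcal{H}|\bigr)=\Lambda\chi_\Gamma(\mathcal{G})\,\Lambda\chi_\Gamma(\mathcal{H}).
\]
The corresponding identity for $\chi_\Gamma$ then follows immediately by applying Theorem~\ref{thrm:GamECInertia} to each of $\mathcal{G}$, $\mathcal{H}$, and $\mathcal{G}\times\mathcal{H}$. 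The only step that requires any real care is the verification that the canonical map $\HOM(\Gamma,\mathcal{G}\times\mathcal{H})\to\HOM(\Gamma,\mathcal{G})\times\HOM(\Gamma,\mathcal{H})$ is a homeomorphism equivariant with respect to the two conjugation actions; everything else is a routine consequence of product constructions in the definable category and of Theorem~\ref{thrm:GamECInertia}.
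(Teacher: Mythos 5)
Your proposal is correct and follows essentially the same route as the paper's proof: both establish the equivariant homeomorphism $\HOM(\Gamma,\mathcal{G}\times\mathcal{H})\cong\HOM(\Gamma,\mathcal{G})\times\HOM(\Gamma,\mathcal{H})$, deduce the isomorphism $\Lambda_\Gamma(\mathcal{G}\times\mathcal{H})\cong\Lambda_\Gamma\mathcal{G}\times\Lambda_\Gamma\mathcal{H}$ and the compatibility of the anchor maps with the product embeddings, and then conclude via multiplicativity of $\chi$ and Theorem~\ref{thrm:GamECInertia}. The only cosmetic difference is that the paper cites an earlier result for the underlying groupoid isomorphism, whereas you verify it directly.
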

\begin{proof}
In \cite[Proposition~3.2]{FarsiSeaGenOrbEuler}, it is demonstrated for arbitrary (not necessarily topological) groupoids
$\mathcal{G}$ and $\mathcal{H}$ that $\Lambda_\Gamma(\mathcal{G})\times\Lambda_\Gamma(\mathcal{H})$
is groupoid isomorphic to $\Lambda_\Gamma(\mathcal{G}\times\mathcal{H})$, where the map on objects is
given by the obvious map sending
$(\phi,\psi)\in\HOM(\Gamma,\mathcal{G})\times\HOM(\Gamma,\mathcal{H})$
to $\phi\times\psi\to\HOM(\Gamma,\mathcal{G}\times\mathcal{H})$, i.e.,
$(\phi\times\psi)(\gamma) = (\phi(\gamma),\psi(\gamma))$. If $\mathcal{G}$ and $\mathcal{H}$
are topological groupoids, then it is a straightforward verification that the function
$(\phi,\psi)\mapsto\phi\times\psi$ is a continuous map
$\HOM(\Gamma,\mathcal{G})\times\HOM(\Gamma,\mathcal{H})\to\HOM(\Gamma,\mathcal{G}\times\mathcal{H})$,
as the open sets of $\mathcal{G}_1\times\mathcal{H}_1$ are generated by products of open sets.
Similarly, the inverse map is the product of the projections and hence continuous.
This map is then a $\mathcal{G}\times\mathcal{H}$-equivariant homeomorphism and hence induces an
isomorphism of topological groupoids and a homeomorphism between $|\Lambda_\Gamma\mathcal{G}|\times|\Lambda_\Gamma\mathcal{H}|$
and $|\Lambda_\Gamma(\mathcal{G}\times\mathcal{H})|$. Then by Theorem~\ref{thrm:ECHomeoInvar} and the multiplicativity of $\chi$,
$\Lambda\chi_\Gamma(\mathcal{G}\times\mathcal{H}) = \Lambda\chi_\Gamma(\mathcal{G})\Lambda\chi_\Gamma(\mathcal{H})$
whenever $\Lambda\chi_\Gamma(\mathcal{G}\times\mathcal{H})$ is defined.

Recalling that $|\alpha_{\Lambda_\Gamma\mathcal{G}}|$ denotes the orbit map of the anchor of the $\Gamma$-inertia groupoid
of $\mathcal{G}$, we have that $|\alpha_{\Lambda_\Gamma(\mathcal{G}\times\mathcal{H})}|$ sends the $\mathcal{G}\times\mathcal{H}$-orbit
of $\phi\times\psi\in\HOM(\Gamma,\mathcal{G}\times\mathcal{H})$ to the $\mathcal{G}\times\mathcal{H}$-orbit of
$(\phi_0,\psi_0)\in\mathcal{G}_0\times\mathcal{H}_0$ and hence coincides with
$|\alpha_{\Lambda_\Gamma\mathcal{G}}|\times|\alpha_{\Lambda_\Gamma\mathcal{H}}|$
up to the homeomorphisms
$|\Lambda_\Gamma\mathcal{G}|\times|\Lambda_\Gamma\mathcal{H}|\simeq|\Lambda_\Gamma(\mathcal{G}\times\mathcal{H})|$
and $|\mathcal{G}\times\mathcal{H}|\simeq|\mathcal{G}\times\mathcal{H}|$.
Therefore, $\mathcal{G}\times\mathcal{H}$ is $\Gamma$-inertia definable via the product embeddings
$\iota_{|\mathcal{G}\times\mathcal{H}|} = \iota_{|\mathcal{G}|}\times\iota_{|\mathcal{H}|}$ and
$\iota_{|\Lambda_\Gamma(\mathcal{G}\times\mathcal{H})|} = \iota_{|\Lambda_\Gamma\mathcal{G}|}\times\iota_{|\Lambda_\Gamma\mathcal{H}|}$.
The multiplicativity of $\chi_\Gamma$ then follows from Theorem~\ref{thrm:GamECInertia}.
\end{proof}


\subsection{Examples}
\label{subsec:Examples}

Here, we give a few concrete examples of computations of $\chi_\Z(\mathcal{G})$ and $\chi_\Gamma(\mathcal{G})$.
We consider translation groupoids of actions of compact Lie groups on semialgebraic subsets of Euclidean space for simplicity.
See Section~\ref{subsec:Extensions} and \cite{TrentinagliaRoleReps} for examples of proper Lie groupoids that are not Morita
equivalent to translation groupoids (as Lie groupoids).

It is well known that the Euler characteristic of a compact Lie group of positive dimension is zero. Hence, if
$\mathcal{G}_x^x$ is abelian, then $\chi(\Ad_{\mathcal{G}_x^x}\!\backslash\mathcal{G}_x^x) = \chi(\mathcal{G}_x^x) = 0$.
However, the partition of $|\mathcal{G}|$ into the level sets of
$\chi(\Ad_{\mathcal{G}_x^x}\!\backslash\mathcal{G}_x^x)$ can still have
an interesting effect on $\chi_\Gamma(\mathcal{G})$, even when all isotropy groups are abelian, as we illustrate
with the following.

\begin{example}
\label{ex:SO2onS2}
Let $A = \Sp^2$ denote the unit sphere in $\R^3$ and let $G = \SO(2,\R)$ act by rotations about the $z$-axis.
Then the orbit space $|G\ltimes A|$ of the translation groupoid $G\ltimes A$ is given by an interval
$[-1, 1]$ parameterized by the $z$-coordinate of points in the orbit. The isotropy group of orbits corresponding
to $z = \pm 1$ is $\SO(2,\R)$, while the isotropy group of other points is trivial. Hence, for a finitely presented
discrete group $\Gamma$, the integral in Equation~\eqref{eq:GammaECGroupoid} can be expressed as
\begin{align*}
    \chi_\Gamma(G\ltimes A)
        &=      \int_{\{-1,1\}} \chi\big(\HOM(\Gamma,\SO(2,\R))\big)\,d\chi(z)
                    + \int_{(-1,1)} \,d\chi(z)
        \\&=    2\chi\big(\HOM(\Gamma,\SO(2,\R))\big) - 1,
\end{align*}
where we note that $\SO(2,\R)$ is abelian so that the conjugation action is trivial.
Hence, if $\Gamma = \Z^\ell$ for some $\ell\geq 1$, then
$\chi\big(\HOM(\Gamma,\SO(2,\R))\big) = \chi\big(\SO(2,\R)^\ell\big) = 0$, and
$\chi_{\Z^\ell}(G\ltimes A) = -1$. If $\Gamma = \Z/k\Z$ for some positive integer $k$,
then a choice of generator for $\Z/k\Z$ identifies $\HOM(\Gamma,\SO(2,\R))$ with the set of
$k$th roots of unity so that $\chi\big(\HOM(\Gamma,\SO(2,\R))\big) = k$ and
$\chi_{\Z/k\Z}(G\ltimes A) = 2k - 1$.

In addition, the inertia space of $G\ltimes A$ is given by two circles attached by a line segment,
which has Euler characteristic $-1$. For arbitrary $\Gamma$, $|\Lambda_\Gamma(G\ltimes A)|$
is given by two copies of $\HOM(\Gamma,\SO(2,\R))$ attached by a line segment.
\end{example}

The following presents an example of the computation of $\chi_\Gamma(\mathcal{G})$ where the groupoid $\mathcal{G}$
is not a Lie groupoid.

\begin{example}
\label{ex:SO2onX}
In Example~\ref{ex:SO2onS2}, we can replace $A$ with any $\SO(2,\R)$-invariant semialgebraic subset of $\R^3$,
and the resulting translation groupoid is orbit space definable by Corollary~\ref{cor:DefinGpdOrbitDefin}.
We will demonstrate below that it is as well $\Gamma$-inertia definable for any finitely presented group $\Gamma$;
see Lemma~\ref{lem:SemialgGSetInertiaDefin}. For example,
let $A$ be the union of the positive $z$-axis $Z = \{(0,0,z):z\geq 0\}$ and the closed unit disk in the
$xy$-plane $D = \{(x,y,0):x^2+y^2\leq 1\}$. Each point in $Z$ has isotropy $\SO(2,\R)$, and the remaining
points have trivial isotropy. Then $|G\ltimes A|$ is given by $Z$ with a closed interval attached by identifying
an endpoint of the interval to the origin in $Z$. Similarly, $|\Lambda_\Gamma (G\ltimes A)|$ is the product
$Z\times\HOM(\Gamma,\SO(2,\R))$ with an endpoint of a closed interval attached to the point
$(0,\mathbf{1})$ where $\mathbf{1}$ denotes the identity homomorphism. As $Z$ is a half-open interval,
$\chi(Z) = 0$ so that $\chi\big(Z\times\HOM(\Gamma,\SO(2,\R))\big) = \chi(Z)\big(\HOM(\Gamma,\SO(2,\R))\big) = 0$.
Hence, for any finitely presented $\Gamma$, $\chi_\Gamma(G\ltimes A)$ is the Euler characteristic of a half-open
interval, i.e., $\chi_\Gamma(G\ltimes A) = 0$.

Similarly, let $Z^\prime = \{(0,0,z):z\in\R\}$ denote the entire $z$-axis, and let $A^\prime = Z^\prime\cup D$.
Then $|G\ltimes A^\prime|$ is $Z^\prime$ with an endpoint of a closed interval attached to the origin, and
$|\Lambda_\Gamma (G\ltimes A^\prime)|$ is the product $Z^\prime\times\HOM(\Gamma,\SO(2,\R))$ with an endpoint of
a closed interval attached to $(0,\mathbf{1})$. Here, $\chi(Z^\prime) = -1$, so as the Euler characteristic
of the points with trivial isotropy vanishes as above,
$\chi_\Gamma(G\ltimes A^\prime)= \chi\big(Z^\prime\times\HOM(\Gamma,\SO(2,\R))\big)= -\chi\big(\HOM(\Gamma,\SO(2,\R))\big)$ for any finitely generated $\Gamma$.
\end{example}

\begin{example}
\label{ex:SO3onR3}
Let $A = \R^3$ and let $G = \SO(3,\R)$ act by its defining representation.
Then the orbit space $|G\ltimes A|$ of the translation groupoid $G\ltimes A$ is given by a ray
$[0,\infty)$ parameterized by the distance $r$ of points in the orbit to the origin. The isotropy
group of orbits corresponding to $r > 0$ is $\SO(2,\R)$, while the isotropy group of the point
$r = 0$ is $\SO(3,\R)$. Hence, for a finitely presented
group $\Gamma$, the integral in Equation~\eqref{eq:GammaECGroupoid} can be expressed as
\begin{align*}
    \chi_\Gamma(G\ltimes A)
        &=      \int_{\{0\}} \chi\big(\SO(3,\R)\backslash\!\HOM(\Gamma,\SO(3,\R))\big)\,d\chi(r)
                    + \int_{(0,\infty)} \chi\big(\HOM(\Gamma,\SO(2,\R))\big) \,d\chi(r)
        \\&=    \chi\big(\{0\}\big) \chi\big(\SO(3,\R)\backslash\!\HOM(\Gamma,\SO(3,\R))\big)
                    + \chi\big((0,\infty)\big) \chi\big(\HOM(\Gamma,\SO(2,\R))\big)
        \\&=    \chi\big(\SO(3,\R)\backslash\!\HOM(\Gamma,\SO(3,\R))\big) - \chi\big(\HOM(\Gamma,\SO(2,\R))\big).
\end{align*}
If $\Gamma = \Z$, then $\chi_\Z(G\ltimes A) = 1$, as $\Ad_{\SO(3,\R)}\!\backslash\!\SO(3,\R)$ is a closed interval
while $\Ad_{\SO(2,\R)}\!\backslash\!\SO(2,\R) = \SO(2,\R)$ is a circle.
See \cite[Section~4.2.6]{FarsiPflaumSeaton1} for a description of the inertia space
$|\Lambda_\Z (G\ltimes A)|$ in this case.
\end{example}


\section{$\Gamma$-Euler characteristics for translation and proper cocompact Lie groupoids}
\label{sec:TranslationProperLie}


\subsection{The $\Gamma$-Euler characteristic for translation groupoids}
\label{subsec:GpoidECTranslation}

In this section, we consider the case that $\mathcal{G} = G\ltimes A$ is a translation groupoid where $G$
is a compact Lie group and $A$ is a semialgebraic $G$-set so that the $\chi^{(\ell)}(A, G)$
are defined; see \cite{GZEMH-HigherOrbEulerCompactGroup} and Definition~\ref{def:GZEC}.
We assume without loss of generality that $G$ is a compact linear algebraic group and hence an algebraic subset of $\R^{m^2}$;
see Section~\ref{subsec:BackDefin}.
We will show that $\chi^{(\ell)}(A, G) = \chi_{\Z^\ell}(G\ltimes A)$, and hence that the
$\chi_\Gamma$ and $\Lambda\chi_\Gamma$ both generalize the $\chi^{(\ell)}$. Note that the $\chi^{(\ell)}(A, G)$ are defined
iteratively by integrating over the space of conjugacy classes in $G$, while the extension of
$\chi_\Gamma$ to groupoids in Equation~\eqref{eq:GammaECGroupoid} required changing the order of
integration to integrate over the orbit space of the groupoid. Here, we give a formulation of
$\chi_\Gamma$ for an arbitrary finitely presented discrete group $\Gamma$ where the integral is over the
space of conjugacy classes in $\HOM(\Gamma,G)$, and hence is closer to the spirit of
\cite{GZEMH-HigherOrbEulerCompactGroup}. This yields a definition of $\chi^{(\ell)}$ that is not
iterative.

\begin{lemma}[(Semialgebraic $G$-sets are $\Gamma$-inertia definable)]
\label{lem:SemialgGSetInertiaDefin}
Let $G$ be a compact linear algebraic group and $A$ a semialgebraic $G$-set. Then for any finitely presented discrete
group $\Gamma$, there are embeddings of $|G\ltimes A|$ and $|\Lambda_\Gamma(G\ltimes A)|$ into Euclidean space with respect
to which $\Lambda_\Gamma(G\ltimes A)$ is orbit space definable, and $G\ltimes A$ is $\Gamma$-inertia definable.
\end{lemma}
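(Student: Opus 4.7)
The plan is to assemble the required definable structures from two applications of Corollary~\ref{cor:DefinGpdOrbitDefin}, together with Lemma~\ref{lem:SemialgEquivarFunc} to ensure compatibility of the orbit-space embeddings through the anchor map. First, I would invoke Corollary~\ref{cor:DefinGpdOrbitDefin} directly to conclude that $G\ltimes A$ is orbit space definable via an embedding $\iota_{|G\ltimes A|}$ arising from a definably proper quotient of $A$; this gives part of what is needed for $\Gamma$-inertia definability.

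Next I would produce an embedding of $|\Lambda_\Gamma(G\ltimes A)|$. Fix a finite presentation $\Gamma = \langle\gamma_1,\ldots,\gamma_\ell\mid R_1,\ldots,R_k\rangle$. Using Notation~\ref{not:FinitePres} and Remark~\ref{rem:InertiaTranslation}, the object space $\HOM(\Gamma,G\ltimes A)$ is identified with
\[
    \{(g_1,\ldots,g_\ell,x)\in G^\ell\times A : R_j(g_1,\ldots,g_\ell)=e \text{ for all } j,\ g_i x = x \text{ for all } i\}.
\]
Since $G\subset\R^{m^2}$ is a compact linear algebraic group and $A\subset\R^n$ is a semialgebraic $G$-set, each condition $R_j(g_1,\ldots,g_\ell) = e$ is polynomial and each fixed-point condition $g_i x = x$ is semialgebraic (the $G$-action being semialgebraic). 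Thus $\HOM(\Gamma, G\ltimes A)$ is a semialgebraic subset of $G^\ell\times A$. The $(G\ltimes A)$-action on $\HOM(\Gamma, G\ltimes A)$ coincides with the restriction of the diagonal $G$-action on $G^\ell\times A$, which is semialgebraic, so $\HOM(\Gamma, G\ltimes A)$ is itself a semialgebraic $G$-set. Applying Corollary~\ref{cor:DefinGpdOrbitDefin} to the translation groupoid $G\ltimes\HOM(\Gamma, G\ltimes A) = \Lambda_\Gamma(G\ltimes A)$ yields an embedding $\iota_{|\Lambda_\Gamma(G\ltimes A)|}$ with respect to which $\Lambda_\Gamma(G\ltimes A)$ is orbit space definable.

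It remains to verify that $|\alpha_{\Lambda_\Gamma(G\ltimes A)}|\co |\Lambda_\Gamma(G\ltimes A)|\to |G\ltimes A|$ is a morphism of affine definable spaces. The anchor map $\alpha_{\Lambda_\Gamma(G\ltimes A)}\co\HOM(\Gamma,G\ltimes A)\to A$ is the restriction of the projection $G^\ell\times A\to A$ onto the last factor, which is manifestly semialgebraic and $G$-equivariant (with $G$ acting trivially on the omitted $G^\ell$ factor of the target). Lemma~\ref{lem:SemialgEquivarFunc} then ensures that the induced map on orbit spaces is a morphism of affine definable spaces with respect to the embeddings of $|G\ltimes A|$ and $|\Lambda_\Gamma(G\ltimes A)|$ obtained above (since, as noted in Remark~\ref{rem:QuotUnique}, the definable structure on each orbit space is unique up to definable homeomorphism). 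This verifies condition (i) of Definition~\ref{def:GamInertiaDefinable}, establishing that $G\ltimes A$ is $\Gamma$-inertia definable.

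I anticipate no serious obstacle; the only point requiring care is the bookkeeping to confirm that the embeddings produced by the two separate applications of Corollary~\ref{cor:DefinGpdOrbitDefin} are compatible enough for Lemma~\ref{lem:SemialgEquivarFunc} to apply, which is handled by the uniqueness of definably proper quotients recalled in Remark~\ref{rem:QuotUnique}.
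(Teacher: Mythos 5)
Your proposal is correct and follows essentially the same route as the paper: identify $\HOM(\Gamma,G\ltimes A)$ as a semialgebraic $G$-set, apply Corollary~\ref{cor:DefinGpdOrbitDefin} to $\Lambda_\Gamma(G\ltimes A)$, and invoke Lemma~\ref{lem:SemialgEquivarFunc} on the $G$-equivariant semialgebraic projection $G^\ell\times A\to A$ to see that $|\alpha_{\Lambda_\Gamma(G\ltimes A)}|$ is a morphism of affine definable spaces. The only cosmetic difference is that the paper obtains $\iota_{|G\ltimes A|}$ by composing the trivial-homomorphism section $A\to\HOM(\Gamma,G\ltimes A)$ with $\iota_{|\Lambda_\Gamma(G\ltimes A)|}$ rather than by a second application of Corollary~\ref{cor:DefinGpdOrbitDefin}, but the two choices agree up to definable homeomorphism by Remark~\ref{rem:QuotUnique}, exactly as you note.
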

\begin{proof}
Using the description of $\HOM(\Gamma,G\ltimes A)$ given in Remark~\ref{rem:InertiaTranslation}, $\HOM(\Gamma,G\ltimes A)$ is a semialgebraic
$G$-set so that by Corollary~\ref{cor:DefinGpdOrbitDefin}, there is an embedding $\iota_{|\Lambda_\Gamma(G\ltimes A)|}$ of
$|\Lambda (G\ltimes A)| = G\backslash\!\HOM(\Gamma,G\ltimes A)$ into $\R^k$ with respect to which $\Lambda_\Gamma(G\ltimes A)$
is orbit space definable. The map $A\to\HOM(\Gamma,G\ltimes A)$ sending each $x\in A$ to $(x,\phi)$ where $\phi$ is the trivial
homomorphism is a topological $G$-embedding of $A$ into $\HOM(\Gamma,G\ltimes A)$.
Hence $G\backslash\! A$ is embedded into $G\backslash\!\HOM(\Gamma,G\ltimes A)$ as the orbits of the trivial homomorphisms,
and composing with $\iota_{|\Lambda_\Gamma(G\ltimes A)|}$ yields an embedding $\iota_{|G\ltimes A|}$ of $G\backslash\! A$ into
$\R^k$. As $A$ has finitely many orbit types, each semialgebraic, by
\cite[Theorem~2.6 and p.~635]{ChoiParkSuhSemialgSlice}, $G\ltimes A$ is orbit space definable using the embedding $\iota_{|G\ltimes A|}$.
Then $\alpha_{\Lambda_\Gamma(G\ltimes A)}$ corresponds to the restriction of the projection $G^\ell\times A\to A$ which
is $G$-equivariant and semialgebraic so that $|\alpha_{\Lambda_\Gamma(G\ltimes A)}|$ is a morphism of affine definable spaces by
Lemma~\ref{lem:SemialgEquivarFunc}.
\end{proof}

\begin{notation}
\label{not:ConjClassCent}
For an element $\phi\in \HOM(\Gamma,G)$, let $[\phi]_G$ denote the $G$-conjugacy class of $\phi$,
let $C_G(\phi)$ denote the centralizer of $\phi$ in $G$, and let $A^{\langle\phi\rangle}$ denote the
set of points in $A$ fixed by the image of $\phi$. The centralizer $C_G(\phi)$ is a closed subgroup of
$G$, and $A^{\langle\phi\rangle}$ is a $C_G(\phi)$-invariant subspace of $A$. The set $A^{\langle\phi\rangle}$
can be defined by a finite number of algebraic conditions using the description of $\phi$ in
Notation~\ref{not:FinitePres} and hence is a definable set. The set
$\bigcup_{\phi^\prime\in[\phi]_G} \{\phi^\prime\} \times A^{\langle\phi^\prime\rangle}$ is a $G$-invariant
subspace of $\HOM(\Gamma,G\ltimes A)\subseteq \HOM(\Gamma,G)\times A$ as
$A^{\langle g\phi g^{-1}\rangle} = g A^{\langle\phi\rangle}$ for any $g\in G$; see
Remark~\ref{rem:InertiaTranslation}.

For the special case $\Gamma = \Z$, we can identify
$\HOM(\Z,G)$ with $G$ by fixing a generator of $\Z$, and then $G\backslash\!\HOM(\Z,G) = \Ad_G\!\backslash G$
is the set of conjugacy
classes in $G$. For $g\in G$, we let $[g]_G$ denote the $G$-conjugacy class of $g$, $C_G(g)$ its centralizer,
and $A^{\langle g\rangle}$ the set of its fixed points in $A$.
\end{notation}

We have the following.

\begin{lemma}
\label{lem:InjectFixSet}
Let $G$ be a compact linear algebraic group, $A$ a semialgebraic $G$-set, and $\Gamma$ a finitely presented discrete group.
Fix an embedding $\iota_{|\Lambda_\Gamma(G\ltimes A)|}$ of $|\Lambda_\Gamma(G\ltimes A)|$ as in Lemma~\ref{lem:SemialgGSetInertiaDefin}.
For each $\phi\in \HOM(\Gamma,G)$, the groupoids $\mathcal{G} = C_G(\phi)\ltimes A^{\langle\phi\rangle}$ and
$\mathcal{H} = G\ltimes \big(\bigcup_{\phi^\prime\in[\phi]_G} \{\phi^\prime\} \times A^{\langle\phi^\prime\rangle}\big)$
are Morita equivalent as topological groupoids, hence the orbit spaces $|\mathcal{G}|$ and $|\mathcal{H}|$
are homeomorphic. Moreover, the groupoid $\mathcal{G}$ admits the structure of an orbit space definable groupoid with respect to which
the homeomorphism $|\mathcal{G}|\to |\mathcal{H}|\subseteq |\Lambda_\Gamma (G\ltimes A)|$ is a morphism of affine definable spaces
$|\mathcal{G}|\to |\Lambda_\Gamma (G\ltimes A)|$.
\end{lemma}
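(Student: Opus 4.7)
The plan is to exhibit an explicit essential equivalence $\Phi\co\mathcal{G}\to\mathcal{H}$ of topological groupoids, then use Corollary~\ref{cor:DefinGpdOrbitDefin} to put an orbit space definable structure on $\mathcal{G}$, and finally verify that the induced homeomorphism of orbit spaces is a morphism of affine definable spaces by checking the graph is definable.

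First I would define the inclusion $\Phi\co\mathcal{G}\to\mathcal{H}$ on objects by $\Phi_0(x)=(\phi,x)$ and on arrows by $\Phi_1(c,x)=(c,(\phi,x))$ for $c\in C_G(\phi)$, $x\in A^{\langle\phi\rangle}$. This is a well-defined homomorphism since $c\phi c^{-1}=\phi$ for every $c\in C_G(\phi)$. To show $\Phi$ is an essential equivalence, the main point is that the map $t_\mathcal{H}\circ\pr_1$ is surjective with local sections. Surjectivity is immediate: given $(\phi',y)\in\mathcal{H}_0$, pick $g\in G$ with $g\phi g^{-1}=\phi'$, and then $g^{-1}y\in A^{\langle\phi\rangle}$ with $g\ast(\phi,g^{-1}y)=(\phi',y)$. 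For local sections, I would use that the orbit map $G\to G/C_G(\phi)\cong[\phi]_G$, $g\mapsto g\phi g^{-1}$, is a principal $C_G(\phi)$-bundle and therefore admits local continuous sections: choose a neighborhood $U$ of $\phi'$ in $[\phi]_G$ together with $\sigma\co U\to G$ satisfying $\sigma(\phi'')\phi\sigma(\phi'')^{-1}=\phi''$, and then the assignment $(\phi'',y'')\mapsto\bigl((\sigma(\phi''),(\phi,\sigma(\phi'')^{-1}y'')),\sigma(\phi'')^{-1}y''\bigr)$ gives the required section on the open set $\{(\phi'',y'')\in\mathcal{H}_0:\phi''\in U\}$. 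For the fibred product condition on arrows, I would observe that an arrow in $\mathcal{H}$ from $(\phi,x_1)$ to $(\phi,x_2)$ is precisely an element $g\in G$ with $g\phi g^{-1}=\phi$ and $gx_1=x_2$, i.e., an element of $C_G(\phi)$ sending $x_1$ to $x_2$, so exactly an arrow in $\mathcal{G}$.

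Having established that $\Phi$ is an essential equivalence, the Morita equivalence of $\mathcal{G}$ and $\mathcal{H}$ follows, and $\Phi_0$ descends to a homeomorphism $\bar{f}\co|\mathcal{G}|\to|\mathcal{H}|\subseteq|\Lambda_\Gamma(G\ltimes A)|$. Next I would observe that $C_G(\phi)$ is a closed algebraic, hence semialgebraic and compact Lie, subgroup of $G$, and that $A^{\langle\phi\rangle}$ is a semialgebraic $C_G(\phi)$-invariant subset of $A$, being cut out by the finitely many algebraic equations $\phi(\gamma_i)x=x$ corresponding to the generators of $\Gamma$. Therefore Corollary~\ref{cor:DefinGpdOrbitDefin} applies to endow $\mathcal{G}=C_G(\phi)\ltimes A^{\langle\phi\rangle}$ with an orbit space definable structure $\iota_{|\mathcal{G}|}$.

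Finally, to verify that $\bar{f}$ is a morphism of affine definable spaces into $|\Lambda_\Gamma(G\ltimes A)|$, I would consider the composition
\[
    f\co A^{\langle\phi\rangle}\xrightarrow{x\mapsto(\phi,x)}\HOM(\Gamma,G\ltimes A)\xrightarrow{\pi}|\Lambda_\Gamma(G\ltimes A)|,
\]
where the first map is semialgebraic with semialgebraic graph $\{(x,\phi,x):x\in A^{\langle\phi\rangle}\}$ and the second is the morphism of affine definable spaces coming from the definably proper quotient structure on $|\Lambda_\Gamma(G\ltimes A)|$ fixed in Lemma~\ref{lem:SemialgGSetInertiaDefin}. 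Since $f$ is constant on $C_G(\phi)$-orbits, it factors uniquely through the orbit map $q\co A^{\langle\phi\rangle}\to|\mathcal{G}|$ as $f=\bar{f}\circ q$. The pair $(q,f)\co A^{\langle\phi\rangle}\to|\mathcal{G}|\times|\Lambda_\Gamma(G\ltimes A)|$ is then a morphism of affine definable spaces, and its image is exactly the graph of $\bar{f}$; by Remark~\ref{rem:DefSpacesOperations}, this image is definable, so $\bar{f}$ is a morphism of affine definable spaces. The step I anticipate as the main technical obstacle is the production of continuous local sections for $t_\mathcal{H}\circ\pr_1$, where care is needed to package the standard principal-bundle fact for $G\to G/C_G(\phi)$ into sections defined on open subsets of $\mathcal{H}_0$ that respect both the conjugation and the fibre coordinate.
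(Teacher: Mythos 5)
Your proof is correct, and its first half (the Morita equivalence) follows the same strategy as the paper's: the inclusion homomorphism $x\mapsto(\phi,x)$, surjectivity of $t_\mathcal{H}\circ\pr_1$ via a conjugating element, and the observation that arrows of $\mathcal{H}$ between points of the image are exactly arrows of $C_G(\phi)\ltimes A^{\langle\phi\rangle}$. On the local-sections step you are in fact more careful than the paper, which disposes of it by ``choosing a constant $G$-element''; your use of local sections of the principal $C_G(\phi)$-bundle $G\to G/C_G(\phi)\cong[\phi]_G$ is the right way to make that step honest when $[\phi]_G$ is not discrete, and the explicit section you write down checks out. Where you genuinely diverge is the definable-structure part: the paper does not invoke Corollary~\ref{cor:DefinGpdOrbitDefin} at all, but instead \emph{defines} $\iota_{|\mathcal{G}|}$ as the composite of the orbit-space homeomorphism $|\mathcal{G}|\to|\mathcal{H}|$ with the restriction of $\iota_{|\Lambda_\Gamma(G\ltimes A)|}$, so that the morphism property holds ``by construction'' and the work shifts to showing $|\mathcal{H}|$ is a definable subset (the image of the definable set $\{\phi\}\times A^{\langle\phi\rangle}$ under the quotient) and that the weak orbit types are definable. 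You instead take the intrinsic definably proper quotient structure on $C_G(\phi)\backslash A^{\langle\phi\rangle}$ and then verify compatibility by showing the graph of $\bar f$ is the definable image of $(q,f)$. Both routes are valid and, by Theorem~\ref{thrm:ECHomeoInvar} and Remark~\ref{rem:intEChomeoInvar}, yield the same Euler characteristics; yours buys a slightly cleaner verification of Definition~\ref{def:OrbDefinGpoid}(iii) (it comes for free from Corollary~\ref{cor:DefinGpdOrbitDefin}), at the cost of the extra graph-definability argument, which you execute correctly using Remark~\ref{rem:DefSpacesOperations}.
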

\begin{proof}
Let $\rho\co \mathcal{G}\to\mathcal{H}$
be the groupoid homomorphism with $\rho_0(x) = (\phi, x)$ and $\rho_1(g, x) = (g\phi g^{-1}, gx)$
for $g\in C_G(\phi)$. Then the map
$t\circ\pr_1\co \mathcal{H}_1\ftimes{s_{\mathcal{H}}}{\rho_0}\mathcal{G}_0 \to\mathcal{H}_0$
is given by $G\times A^{\langle\phi\rangle}\ni\big(g,(\phi,x)\big)\mapsto(g\phi g^{-1}, gx)$.
For any $\phi^\prime = g\phi g^{-1}\in [\phi]_G$ and
$x\in A^{\langle\phi^\prime\rangle}$, $(\phi^\prime, x)$ is the image under this map of
$g^{-1}(\phi^\prime, x) = (\phi,g^{-1}x)$ where $g^{-1}x\in A^{\langle\phi\rangle}$,
so $t\circ\pr_1$ is surjective. As well, $t\circ\pr_1$ admits local sections by choosing a
constant $G$-element, i.e., a section of the form
$(\phi^\prime, y)\mapsto \big(g, (g^{-1}\phi^\prime g, g^{-1}y)\big)$ where $g^{-1}\phi^\prime g = \phi$.
The restriction $\mathcal{H}_{|\rho_0(\mathcal{G}_0)}$ is obviously
isomorphic to $\mathcal{G}$ so that
$\mathcal{G}_1 = (\mathcal{G}_0\times\mathcal{G}_0)\ftimes{\rho_0\times\rho_0}{(s_{\mathcal{H}},t_{\mathcal{H}})}\mathcal{H}_1$
via $\rho$ and the corresponding maps $(s,t)$. Hence $\rho$ is an essential equivalence.
Finally, note that
$A^{\langle\phi\rangle} = A^{\langle\phi(\gamma_1)\rangle}\cap\cdots\cap A^{\langle\phi(\gamma_\ell)\rangle}$
for generators $\gamma_1,\ldots,\gamma_\ell$ of $\Gamma$ and hence is semialgebraic. Hence $\{\phi\}\times A^{\langle\phi\rangle}$
is a definable subset of $\HOM(\Gamma,G\ltimes A)$ and its image in $|\Lambda_\Gamma (G\ltimes A)|$ via the quotient map
is a definable subset. Then as $C_G(\phi)$
is compact, $A^{\langle\phi\rangle}$ is a semialgebraic $C_G(\phi)$-set so that it has compact isotropy groups
and finitely many orbit types, each a semialgebraic set, by \cite[Theorem~2.6 and p.~635]{ChoiParkSuhSemialgSlice}.
Then composing the homeomorphism $|\mathcal{G}|\to|\mathcal{H}|\subset|\Lambda_\Gamma (G\ltimes A)|$ with
the embedding $\iota_{|\Lambda_\Gamma(G\ltimes A)|}$ of $|\Lambda_\Gamma(G\ltimes A)|$ yields an embedding
of $|\mathcal{G}|$ into Euclidean space with respect to which $\mathcal{G}$ is orbit space definable and
$|\mathcal{G}|\to|\Lambda_\Gamma (G\ltimes A)|$ is a morphism of affine definable spaces by construction.
\end{proof}

We now have the following, demonstrating that the higher-order orbifold Euler characteristics of
Equation~\eqref{eq:HigherEulerCharGZDef} coincide with the $\chi_{\Z^\ell}$
of Equation~\eqref{eq:GammaECGroupoid} and the $\Lambda\chi_{\Z^\ell}$ of Definition~\ref{def:GamInertiaDefinable}
when they are all defined, i.e., in the case of a translation
groupoid for a semialgebraic $G$-set where $G$ is a compact Lie group. The idea behind the proof is that, by Lemma~\ref{lem:IterateInertia},
the $\Lambda_{\Z^\ell}(G\ltimes A)$ can be iterated, i.e.,
$\Lambda_{\Z^\ell}(G\ltimes A) = \Lambda_\Z(\Lambda_{\Z^{\ell-1}}(G\ltimes A))$, allowing
us to interpret each iteration of the recursive definition in Equation~\eqref{eq:HigherEulerCharGZDef}
in terms of applying $\Lambda_\Z$.

\begin{theorem}[($\chi_{\Z^\ell}$ generalizes $\chi^{(\ell)}$)]
\label{thrm:TranslationECequal}
Let $G$ be a compact linear algebraic group and $A$ a semialgebraic $G$-set. Then for each $\ell \geq 0$,
$\chi^{(\ell)}(A, G) = \chi_{\Z^\ell}(G\ltimes A) = \Lambda\chi_{\Z^\ell}(G\ltimes A)$. In particular,
$\chi^{(1)}(A, G) = \chi_\Z(G\ltimes A) = \Lambda\chi_\Z(G\ltimes A)$.
\end{theorem}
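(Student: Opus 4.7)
The second equality $\chi_{\Z^\ell}(G\ltimes A) = \Lambda\chi_{\Z^\ell}(G\ltimes A)$ is immediate: Lemma~\ref{lem:SemialgGSetInertiaDefin} shows that $G\ltimes A$ is $\Z^\ell$-inertia definable, and Theorem~\ref{thrm:GamECInertia} then yields the claim. Hence the task reduces to proving $\chi^{(\ell)}(A, G) = \Lambda\chi_{\Z^\ell}(G\ltimes A)$, which I will establish by induction on $\ell$. The base case $\ell = 0$ holds because $\HOM(\{0\}, G\ltimes A)$ is canonically $G$-homeomorphic to $A$, so $|\Lambda_{\{0\}}(G\ltimes A)| = G\backslash A$, and $\chi^{(0)}(A,G) = \chi(G\backslash A)$ by Definition~\ref{def:GZEC}.

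The heart of the argument is the case $\ell = 1$, which I would handle by applying Fubini directly. Using Remark~\ref{rem:InertiaTranslation}, the projection $\HOM(\Z, G\ltimes A) = \{(g,x) \in G\times A : gx=x\} \to G$ onto the first factor is $G$-equivariant (with $G$ acting by conjugation on the target) and semialgebraic, so Lemma~\ref{lem:SemialgEquivarFunc} provides a morphism of affine definable spaces $p\co |\Lambda_\Z(G\ltimes A)| \to \Ad_G\!\backslash G$. By Lemma~\ref{lem:InjectFixSet}, the fiber $p^{-1}([g]_G)$ is homeomorphic to $C_G(g)\backslash A^{\langle g\rangle}$ as an affine definable space. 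Fubini's theorem (Theorem~\ref{thrm:Fubini}) applied to $p$ then gives
\begin{equation*}
\Lambda\chi_\Z(G\ltimes A) = \int_{\Ad_G\backslash G} \chi\big(C_G(g)\backslash A^{\langle g\rangle}\big)\, d\chi([g]_G) = \chi^{(1)}(A, G).
\end{equation*}

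For the inductive step, assume the result holds for $\ell - 1$ and every compact Lie group acting semialgebraically. By Lemma~\ref{lem:IterateInertia} with $\Gamma_1 = \Z^{\ell-1}$ and $\Gamma_2 = \Z$, there is an isomorphism $\Lambda_{\Z^\ell}(G\ltimes A) \cong \Lambda_\Z(\Lambda_{\Z^{\ell-1}}(G\ltimes A))$. The inner groupoid is the translation groupoid $G\ltimes Y$ for the semialgebraic $G$-set $Y = \HOM(\Z^{\ell-1}, G\ltimes A)$, so applying the $\ell = 1$ case to $G\ltimes Y$ produces
\begin{equation*}
\Lambda\chi_{\Z^\ell}(G\ltimes A) = \int_{\Ad_G\backslash G} \chi\big(C_G(g)\backslash Y^{\langle g\rangle}\big)\, d\chi([g]_G).
\end{equation*}
A direct computation from Remark~\ref{rem:InertiaTranslation} shows that the $g$-fixed locus $Y^{\langle g\rangle}$ in the $G$-conjugation action on $Y$ consists of tuples $(g_2, \ldots, g_\ell, x)$ with $g_i \in C_G(g)$, $x \in A^{\langle g\rangle}$, $g_i x = x$, and $g_i g_j = g_j g_i$; that is, $Y^{\langle g\rangle} = \HOM(\Z^{\ell-1}, C_G(g)\ltimes A^{\langle g\rangle})$. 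Consequently $C_G(g)\backslash Y^{\langle g\rangle} = |\Lambda_{\Z^{\ell-1}}(C_G(g)\ltimes A^{\langle g\rangle})|$, whose Euler characteristic equals $\chi^{(\ell-1)}(A^{\langle g\rangle}, C_G(g))$ by the inductive hypothesis. Substituting into the integral recovers the recursive formula~\eqref{eq:HigherEulerCharGZDef} defining $\chi^{(\ell)}(A, G)$, closing the induction.

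The main obstacle in making this plan rigorous is ensuring that each fiber identification occurs at the level of affine definable spaces so that Fubini and the inductive hypothesis genuinely apply; this requires repeatedly checking semialgebraicity of the relevant equivariant maps and invoking Lemma~\ref{lem:SemialgEquivarFunc} and Lemma~\ref{lem:InjectFixSet}. A minor subtlety is that $|\Lambda_{\Z^\ell}(G\ltimes A)|$ inherits affine definable structures both directly from Lemma~\ref{lem:SemialgGSetInertiaDefin} and via the iterated description in Lemma~\ref{lem:IterateInertia}, but Theorem~\ref{thrm:ECHomeoInvar} and Remark~\ref{rem:intEChomeoInvar} ensure $\Lambda\chi_{\Z^\ell}$ is insensitive to this choice, so no ambiguity arises.
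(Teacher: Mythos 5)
Your proposal is correct and follows essentially the same route as the paper: induction on $\ell$, with the $\ell=1$ case handled by applying Fubini's theorem to the induced map $|\Lambda_\Z(G\ltimes A)|\to\Ad_G\!\backslash G$ and identifying fibers via Lemma~\ref{lem:InjectFixSet}, and the inductive step resting on the identification $\HOM(\Z^{\ell-1},G\ltimes A)^{\langle g\rangle}=\HOM(\Z^{\ell-1},C_G(g)\ltimes A^{\langle g\rangle})$. The only cosmetic difference is that you formally invoke Lemma~\ref{lem:IterateInertia} and reapply the $\ell=1$ case to $G\ltimes\HOM(\Z^{\ell-1},G\ltimes A)$, whereas the paper reruns the Fubini argument directly on the projection $(g_\ell,\ldots,g_1,x)\mapsto g_\ell$ — the same computation, and the paper explicitly credits the iteration lemma as the idea behind its proof.
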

\begin{proof}
We prove by induction on $\ell$ that $\chi^{(\ell)}(A, G) = \Lambda\chi_{\Z^\ell}(G\ltimes A)$,
and then the claim follows by Theorem~\ref{thrm:GamECInertia}. If $\ell = 0$, then
$\chi^{(0)}(A, G) = \chi(G\backslash A)$ and $\Lambda_0(G\ltimes A) = G\ltimes A$ so that
$\chi^{(0)}(A, G) = \Lambda\chi_0(G\ltimes A)$.
Consider the case $\ell = 1$, and fix an embedding $\iota_{|\Lambda_{\Z}(G\ltimes A)|}$ as in Lemma~\ref{lem:SemialgGSetInertiaDefin}.
Fixing a generator of $\Z$, we identify $\HOM(\Z,G\ltimes A)$ with the subspace
$\{(g,x)\in G\times A : gx = x \}$ of $G\times A$, i.e., a homomorphism $\phi\co\Z\to G\ltimes A$
corresponds to the point $(\phi_1(1), \phi_0)\in G\times A$. Then the projection $\pr_1\co G\times A \to G$
restricts to a $G$-equivariant map $\pr_1\co\HOM(\Z,G\ltimes A)\to G$ and hence induces a map
$|\pr_1|\co|\Lambda_\Z(G\ltimes A)| \to \Ad_G\!\backslash G$. As $\pr_1$ is equivariant and semialgebraic,
$|\pr_1|$ is a morphism of affine definable spaces by Lemma~\ref{lem:SemialgEquivarFunc}, where
$\Ad_G\!\backslash G$ is given the structure of an affine definable space as in Remark~\ref{rem:QuotUnique}.
For $[g]_G\in \Ad_G\!\backslash G$, we have
\begin{align*}
    |\pr_1|^{-1}([g]_G)
        &=      G\backslash\{(h,x)\in G\times A : hx = x \text{ and } h\in [g]_G\}
        \\&=    G\backslash\big(\bigcup_{h\in[g]_G} \{h\} \times A^{\langle h\rangle}\big),
\end{align*}
which is a definable subset of $|\Lambda_\Z(G\ltimes A)|$
homeomorphic to $C_G(g)\backslash A^{\langle g\rangle}$ by Lemma~\ref{lem:InjectFixSet}.
By Theorem~\ref{thrm:ECHomeoInvar}, $\chi\big(C_G(g)\backslash A^{\langle g\rangle}\big)$ depends only on the
homeomorphism type of $C_G(g)\backslash A^{\langle g\rangle}$ so that we can express
\begin{align*}
    \chi^{(1)}(A,G)
        &=      \int_{\Ad_G\!\backslash G} \chi\big(C_G(g)\backslash A^{\langle g\rangle}\big) \, d\chi([g]_G)
        \\&=    \int_{\Ad_G\!\backslash G} \chi\big(|\pr_1|^{-1}([g]_G)\big) \, d\chi([g]_G)
        \\&=    \int_{\Ad_G\!\backslash G} \left(\int_{|\pr_1|^{-1}([g]_G)} 1\,d\chi\right)d\chi([g]_G).
\end{align*}
Applying Fubini's theorm to the map $|\pr_1|$, this is equal to
\[
    \int_{|\Lambda_\Z(G\ltimes A)|} 1\, d\chi([\phi]_G) = \Lambda\chi_\Z(G\ltimes A).
\]
Hence $\chi^{(1)}(A,G) = \Lambda\chi_\Z(G\ltimes A)$, and by Theorem~\ref{thrm:GamECInertia},
$\chi^{(1)}(A,G) = \chi_\Z(G\ltimes A)$.

Now assume $\chi^{(\ell-1)}(A,G) = \Lambda\chi_{\Z^{\ell-1}}(G\ltimes A)$ for some $\ell \geq 2$.
Fixing generators of $\Z^\ell$ and identifying $\phi\in\HOM(\Z^\ell,G)$ with the image of the generators
$(g_\ell,\ldots,g_1)\in G^\ell$,
$\HOM(\Z^\ell,G\ltimes A) = \{(g_\ell,\ldots,g_1,x)\in G^\ell\times A : g_i x = x\; \forall i \leq\ell \}$.
Choose an embedding $\iota_{|\Lambda_{\Z^{\ell}}(G\ltimes A)|}$ as in Lemma~\ref{lem:SemialgGSetInertiaDefin},
and let $\iota_{|\Lambda_{\Z^{\ell-1}}(G\ltimes A)|}$ be the restricted embedding constructed by identifying
$|\Lambda_{\Z^{\ell-1}}(G\ltimes A)|$ with the orbits in $|\Lambda_{\Z^{\ell}}(G\ltimes A)|$ of
homomorphisms such that $g_\ell$ is the identity.
The projection $\pr_1\co G^\ell\times A \to G$ mapping $(g_\ell,\ldots,g_1,x)\mapsto g_\ell$
is $G$-equivariant and induces as in the previous case a morphism of affine definable spaces
$|\pr_1^\ell|\co|\Lambda_{\Z^\ell}(G\ltimes A)| \to \Ad_G\!\backslash G$. For $g \in G$, the preimage
$|\pr_1^\ell|^{-1}([g]_G)$ is given by the $G$-quotient of the set of $(g_\ell,\ldots,g_1,x)\in G^\ell\times A$
such that the $g_i$ pairwise commute, $g_i x = x$ for each $i$, and $g_\ell \in [g]_G$. We may rewrite this as
\begin{align*}
    |\pr_1^\ell|^{-1}([g]_G)
        &=      G\backslash\Big(\bigcup\limits_{g_\ell\in [g]_G} \{g_\ell\}\times
                    \{(g_{\ell-1},\ldots,g_1,x)\in C_G(g_\ell)^{\ell-1}\times A^{\langle g_\ell\rangle}
                        : g_i x = x, \; g_i g_j = g_j g_i \}\Big)
        \\&=    G\backslash\Big(\bigcup\limits_{g_\ell\in [g]_G} \{g_\ell\}\times
                    \HOM(\Z^{\ell-1}, C_G(g_\ell)\ltimes A^{\langle g_\ell\rangle}) \Big)
        \\&=    G\backslash\Big(\bigcup\limits_{g_\ell\in [g]_G} \{g_\ell\}\times
                    \HOM(\Z^{\ell-1}, G\ltimes A)^{\langle g_\ell\rangle} \Big).
\end{align*}
Here, the $G$-action on $\HOM(\Z^{\ell-1}, G\ltimes A)$ corresponds to
the diagonal action on $(g_{\ell-1},\ldots,g_1,x)$. By Lemma~\ref{lem:InjectFixSet}, this is a definable subset of
$|\Lambda_{\Z^\ell}(G\ltimes A)|$ homeomorphic to $C_G(g)\backslash\!\HOM(\Z^{\ell-1}, G\ltimes A)^{\langle g\rangle}$, which by the
definition of the $G$-action is equal to $C_G(g)\backslash\!\HOM(\Z^{\ell-1}, C_G(g)\ltimes A^{\langle g\rangle})$.
Note that $A^{\langle g\rangle}$ is a semialgebraic $C_G(g)$-set so that
$C_G(g)\ltimes\HOM(\Z^{\ell-1}, C_G(g)\ltimes A^{\langle g\rangle}) =
\Lambda_{\Z^{\ell-1}}(C_G(g)\ltimes A^{\langle g\rangle})$ admits the structure of an orbit space definable groupoid by
Lemma~\ref{lem:SemialgGSetInertiaDefin}. Then by the inductive hypothesis, we have
\begin{align*}
    \chi^{(\ell)}(A,G)
        &=      \int_{Ad_G\!\backslash G} \chi^{(\ell-1)}\big(C_G(g)\backslash A^{\langle g\rangle}\big) \, d\chi([g]_G)
        \\&=    \int_{Ad_G\!\backslash G} \chi\big(|\Lambda_{\Z^{\ell-1}}(C_G(g)\ltimes A^{\langle g\rangle})|\big)
                    \, d\chi([g]_G),
\end{align*}
which by the previous discussion and the homeomorphism-invariance of $\chi$ given by Theorem~\ref{thrm:ECHomeoInvar} is equal to
\begin{align*}
        &=      \int_{Ad_G\!\backslash G} \chi\big(C_G(g)\backslash\!\HOM(\Z^{\ell-1}, C_G(g)\ltimes A^{\langle g\rangle})\big)
                    \, d\chi([g]_G)
        \\&=    \int_{Ad_G\!\backslash G} \chi\big(|\pr_1^\ell|^{-1}([g]_G)\big) \, d\chi([g]_G)
        \\&=    \int_{Ad_G\!\backslash G} \left(\int_{|\pr_1^\ell|^{-1}([g]_G)} 1\, d\chi([\phi]_G)  \right)\,
                    d\chi([g]_G).
\end{align*}
Applying Fubini's theorem, we continue
\begin{align*}
    &=      \int_{Ad_G\!\backslash G} \left(\int_{|\pr_1^\ell|^{-1}([g]_G)} 1\, d\chi([\phi]_G)  \right)\, d\chi([g]_G)
    \\&=    \int_{|\Lambda_{\Z^\ell}(G\ltimes A)|} 1\, d\chi([\phi]_G)
    \\&=    \Lambda\chi_{\Z^\ell}(G\ltimes A),
\end{align*}
completing the proof.
\end{proof}

In particular, Theorem~\ref{thrm:TranslationECequal} and Corollary~\ref{cor:ChiMoritaInvar}
imply the following.

\begin{corollary}[(Morita invariance of $\chi^{(\ell)}$)]
\label{cor:ChiGZEMHMoritaInvar}
Let $G$ be a compact linear algebraic group, $A$ a semialgebraic $G$-set, and $\Gamma$ a finitely presented
discrete group. Then the $\chi^{(\ell)}(G\ltimes A)$ depend only on the Morita
equivalence class of the translation groupoid $G\ltimes A$ as a topological groupoid.
\end{corollary}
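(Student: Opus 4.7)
The plan is to obtain this corollary as an immediate consequence of two results already established in the excerpt. First I would invoke Theorem~\ref{thrm:TranslationECequal}, which applies because $G$ is a compact linear algebraic group and $A$ is a semialgebraic $G$-set, to rewrite $\chi^{(\ell)}(A,G)$ as the $\Z^\ell$-Euler characteristic $\chi_{\Z^\ell}(G\ltimes A)$ (equivalently, $\Lambda\chi_{\Z^\ell}(G\ltimes A)$). This reduces the problem to showing that $\chi_{\Z^\ell}$ is Morita invariant for the translation groupoid $G\ltimes A$ as a topological groupoid.

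Next I would verify that the hypotheses of Corollary~\ref{cor:ChiMoritaInvar} are satisfied with $\Gamma = \Z^\ell$, namely that $G\ltimes A$ is $\Z^\ell$-inertia definable. This is exactly the content of Lemma~\ref{lem:SemialgGSetInertiaDefin}, which produces embeddings of $|G\ltimes A|$ and $|\Lambda_{\Z^\ell}(G\ltimes A)|$ into Euclidean space with the required definable structure. Applying Corollary~\ref{cor:ChiMoritaInvar} then yields that $\chi_{\Z^\ell}(G\ltimes A)$ depends only on the Morita equivalence class of $G\ltimes A$ as a topological groupoid, and combining with the identification of the previous paragraph completes the proof.

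There is no genuine obstacle here since the work has already been done: Theorem~\ref{thrm:TranslationECequal} translates the iteratively-defined invariant $\chi^{(\ell)}$ of \cite{GZEMH-HigherOrbEulerCompactGroup} into the intrinsically groupoid-theoretic invariant $\chi_{\Z^\ell}$, and Corollary~\ref{cor:ChiMoritaInvar} (which itself rested on Theorem~\ref{thrm:GamECInertia} and the homeomorphism-invariance of the Euler characteristic from Theorem~\ref{thrm:ECHomeoInvar}) provides the Morita invariance. The only point worth emphasizing in the write-up is that Morita equivalent representatives need not themselves be translation groupoids of semialgebraic $G$-sets; this does not matter because the equality $\chi^{(\ell)}(A,G) = \chi_{\Z^\ell}(G\ltimes A)$ is used only on the single representative $G\ltimes A$, after which the Morita invariance of the right-hand side transfers to the left.
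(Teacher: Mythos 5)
Your proposal is correct and is exactly the paper's argument: the corollary is stated there as an immediate consequence of Theorem~\ref{thrm:TranslationECequal} combined with Corollary~\ref{cor:ChiMoritaInvar}, with Lemma~\ref{lem:SemialgGSetInertiaDefin} supplying the $\Z^\ell$-inertia definability needed to invoke the latter. Your closing remark that the identification $\chi^{(\ell)}(A,G)=\chi_{\Z^\ell}(G\ltimes A)$ is only needed on the single representative $G\ltimes A$ is a worthwhile clarification, but it does not change the route.
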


Finally, using the same idea as the proof of Theorem~\ref{thrm:TranslationECequal}, we can give a description
of $\chi_\Gamma(G\ltimes A)$ for an arbitrary finitely presented discrete group $\Gamma$ that is very
closely related to the original definitions of $\chi^{(\ell)}$ in \cite{GZEMH-HigherOrbEulerCompactGroup}.

\begin{theorem}[($\chi_\Gamma$ for translation groupoids)]
\label{thrm:TranslationECNonIterat}
Let $G$ be a compact linear algebraic group, $A$ and semialgebraic $G$-set, and $\Gamma$ a finitely presented
discrete group. Then
\[
    \chi_\Gamma(G\ltimes A)
        =   \int_{G\backslash\!\HOM(\Gamma,G)} \chi\big(C_G(\phi)\backslash A^{\langle\phi\rangle}\big)
                \, d\chi([\phi]_G).
\]
\end{theorem}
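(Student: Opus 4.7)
The plan is to mirror the strategy used for Theorem~\ref{thrm:TranslationECequal}, replacing the base of the Fubini integration with the orbit space of conjugacy classes of homomorphisms $G\backslash\!\HOM(\Gamma,G)$ rather than iterating over $\Ad_G\backslash G$. First I would invoke Theorem~\ref{thrm:GamECInertia} together with Lemma~\ref{lem:SemialgGSetInertiaDefin} to replace $\chi_\Gamma(G\ltimes A)$ by $\Lambda\chi_\Gamma(G\ltimes A) = \chi(|\Lambda_\Gamma(G\ltimes A)|)$, which reduces the theorem to computing this single Euler characteristic via an appropriate Fubini decomposition.

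Next, following the description in Remark~\ref{rem:InertiaTranslation} and Notation~\ref{not:FinitePres}, I identify $\HOM(\Gamma,G\ltimes A)$ with the $G$-invariant semialgebraic subset of $\HOM(\Gamma,G)\times A$ consisting of pairs $(\phi,x)$ with $\phi(\gamma)x=x$ for all $\gamma\in\Gamma$. The projection onto the first factor $\pr_1\co\HOM(\Gamma,G\ltimes A)\to\HOM(\Gamma,G)$ is $G$-equivariant and semialgebraic, and $\HOM(\Gamma,G)$ is itself a semialgebraic $G$-set for the conjugation action, so $G\backslash\!\HOM(\Gamma,G)$ carries the structure of an affine definable space as in Remark~\ref{rem:QuotUnique}. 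By Lemma~\ref{lem:SemialgEquivarFunc}, $\pr_1$ descends to a morphism of affine definable spaces
\[
    |\pr_1|\co|\Lambda_\Gamma(G\ltimes A)|\to G\backslash\!\HOM(\Gamma,G).
\]

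The key step is to identify the fibers of $|\pr_1|$. For $\phi\in\HOM(\Gamma,G)$, the preimage of $[\phi]_G$ is
\[
    |\pr_1|^{-1}([\phi]_G)
        =   G\backslash\Big(\bigcup_{\phi^\prime\in[\phi]_G} \{\phi^\prime\} \times A^{\langle\phi^\prime\rangle}\Big),
\]
and the argument of Lemma~\ref{lem:InjectFixSet}, which is written for this exact setup, establishes that this fiber is homeomorphic to $C_G(\phi)\backslash A^{\langle\phi\rangle}$ via an essential equivalence of topological groupoids. Since Euler characteristic is homeomorphism-invariant by Theorem~\ref{thrm:ECHomeoInvar}, the integrand $\chi(C_G(\phi)\backslash A^{\langle\phi\rangle})$ is the constructible function $[\phi]_G\mapsto\chi(|\pr_1|^{-1}([\phi]_G))$ on $G\backslash\!\HOM(\Gamma,G)$.

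Finally, Fubini's theorem (Theorem~\ref{thrm:Fubini}) applied to the morphism $|\pr_1|$ with integrand $f\equiv 1$ gives
\[
    \chi\big(|\Lambda_\Gamma(G\ltimes A)|\big)
    = \int_{G\backslash\!\HOM(\Gamma,G)} \chi\big(|\pr_1|^{-1}([\phi]_G)\big)\,d\chi([\phi]_G)
    = \int_{G\backslash\!\HOM(\Gamma,G)} \chi\big(C_G(\phi)\backslash A^{\langle\phi\rangle}\big)\,d\chi([\phi]_G),
\]
and combining with the first step yields the desired formula. The only real obstacle is checking that the definable structures match up: specifically, that $|\pr_1|$ is a morphism of affine definable spaces and that Lemma~\ref{lem:InjectFixSet} can be invoked fiberwise with the fiber carrying a definable structure compatible with the ambient embedding. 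Both of these are direct consequences of the semialgebraicity of $\HOM(\Gamma,G)$ described in Notation~\ref{not:FinitePres} combined with Lemma~\ref{lem:SemialgEquivarFunc} and the fact that the inclusion $\{\phi\}\times A^{\langle\phi\rangle}\hookrightarrow\HOM(\Gamma,G\ltimes A)$ is semialgebraic, so no new technical machinery beyond that already developed in Sections~\ref{sec:OrbitDefinGpoids} and \ref{sec:ECSpace} is needed.
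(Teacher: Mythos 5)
Your proposal is correct and follows essentially the same route as the paper's proof: the same $G$-equivariant projection $\HOM(\Gamma,G\ltimes A)\to\HOM(\Gamma,G)$, the same identification of the fibers of the induced map on orbit spaces via Lemma~\ref{lem:InjectFixSet}, and the same application of Fubini's theorem together with Theorem~\ref{thrm:GamECInertia}. The only cosmetic difference is that you invoke Theorem~\ref{thrm:GamECInertia} at the start rather than at the end.
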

\begin{proof}
The proof is similar to the case $\ell=1$ of Theorem~\ref{thrm:TranslationECequal}. The composition of
$\phi\in\HOM(\Gamma,G\ltimes A)$ with the $G$-equivariant projection $\pr_1\co G\times A\to G$ restricts
to a $G$-equivariant map $\pr_1^\Gamma\co\HOM(\Gamma,G\ltimes A)\to\HOM(\Gamma,G)$ and hence, choosing embeddings
as in Lemma~\ref{lem:SemialgGSetInertiaDefin} and Remark~\ref{rem:QuotUnique}, induces a morphism
$|\pr_1^\Gamma|\co|\Lambda_\Gamma(G\ltimes A)|\to G\backslash\!\HOM(\Gamma,G)$ of affine definable spaces. Using the identification
of $\HOM(\Gamma,G\ltimes A)$ with a subset of $\HOM(\Gamma,G)\times A$ in Remark~\ref{rem:InertiaTranslation},
we have for $[\phi]_G\in G\backslash\!\HOM(\Gamma,G)$ that
\begin{align*}
    |\pr_1^\Gamma|^{-1}([\phi]_G)
        &=      G\backslash\{(\phi^\prime,x) : \phi^\prime(\gamma)\in G_x\;
                    \forall\gamma\in\Gamma \text{ and } \phi^\prime\in [\phi]_G\}
        \\&=    G\backslash\big(\bigcup_{\phi^\prime\in[\phi]_G} \{\phi^\prime\}
                    \times A^{\langle\phi^\prime\rangle}\big),
\end{align*}
which is a definable subset of $|\Lambda_\Gamma(G\ltimes A)|$ homeomorphic to $C_G(\phi)\backslash A^{\langle\phi\rangle}$ by
Lemma~\ref{lem:InjectFixSet}. Then by Theorem~\ref{thrm:ECHomeoInvar},
\begin{align*}
    \int_{G\backslash\!\HOM(\Gamma,G)} \chi\big( C_G(\phi)\backslash A^{\langle\phi\rangle}\big) \, d\chi([\phi]_G)
        &=      \int_{G\backslash\!\HOM(\Gamma,G)} \chi\big( |\pr_1^\Gamma|^{-1}([\phi]_G)\big) \, d\chi([\phi]_G)
        \\&=    \int_{G\backslash\!\HOM(\Gamma,G)} \left(\int_{|\pr_1^\Gamma|^{-1}
                    ([\phi]_G)} 1\,d\chi([(\phi^\prime,x)]_G)\right) \, d\chi([\phi]_G)
        \\&=    \int_{|\Lambda_\Gamma(G\ltimes A)|} 1\, d\chi([(\phi^\prime,x)]_G),
\end{align*}
where in the last step we apply Fubini's theorem to $|\pr_1^\Gamma|$. This is equal to
$\Lambda\chi_\Gamma(G\ltimes A)$, and hence by Theorem~\ref{thrm:GamECInertia} to
$\chi_\Gamma(G\ltimes A)$, completing the proof.
\end{proof}

In particular, when $\Gamma=\Z^\ell$, we can express
\[
    \chi^{(\ell)}(G,A)
        =   \int_{G\backslash\!\HOM(\Z^\ell,G)} \chi\big( C_G(\phi)\backslash A^{\langle\phi\rangle}\big)
                \, d\chi([\phi]_G).
\]
Note that $\HOM(\Z^\ell,G)$ can be identified with the set of commuting $\ell$-tuples of elements of $G$,
and then the $G$-action is by component-wise conjugation.


\subsection{The $\chi_\Gamma$ for cocompact proper Lie groupoids}
\label{subsec:GpoidECLie}

In this section, for the case of a cocompact proper Lie groupoid, we describe an alternate formulation of
$\chi_\Gamma$ as a sum of Euler characteristics of orbit space definable translation groupoids. In the case
that these can be chosen to be semilagebraic translation groupoids, this expresses $\chi_\Gamma$
an ``integral over the group factor," closer to the spirit of Equations~\eqref{eq:EulerCharGZDef} and
\eqref{eq:HigherEulerCharGZDef} and Theorem~\ref{thrm:TranslationECNonIterat}; see
Equation~\eqref{eq:GamECLieGIntegral2}.

It will simplify matters to consider the language of \emph{orbispace charts}, see \cite[Section~3.2.1--2]{WangThesis},
and $\mathfrak{T}$-\emph{orbispace charts}; see \cite[Section~2.1]{PflaumOrbispace}. If $X$ is a connected Hausdorff
space, an \emph{orbispace chart} for $X$ is a triple $(V,G,\pi)$ where $V$ is a smooth manifold, $G$ is a Lie
group, and $\pi\co V\to X$ is a $G$-invariant map such that $\pi(V)$ is open, and $\pi$ induces
a homeomorphism of $G\backslash V$ onto $\pi(V)$. The orbispace chart $(V,G,\pi)$ is \emph{compact} if $G$ is compact and
\emph{linear} if $V$ is a $G$-invariant neighborhood of the origin in a linear representation of $G$.
Note that in \cite[Theorem~3.2.31 and Corollary~3.2.32]{WangThesis}, Wang demonstrates that every proper Lie
groupoid is Morita equivalent (as a Lie groupoid) to a groupoid associated to an \emph{orbispace atlas} for $|\mathcal{G}|$, a
collection of orbispace charts satisfying compatibility conditions; see \cite[Definition~3.2.3--4]{WangThesis}.

By the slice theorem \cite[Corollary~3.11]{PPTOrbitSpace}, if $X = |\mathcal{G}|$ for a proper Lie groupoid
$\mathcal{G}$, then every orbit $\mathcal{G}x\in|\mathcal{G}|$ is contained in the image of a compact linear
orbispace chart $(V_x,\mathcal{G}_x^x,\pi_x)$ where $x\in V_x\subseteq\mathcal{G}_0$
corresponds to the origin of the linear representation of $\mathcal{G}_x^x$ on $V_x$, $\mathcal{G}_{|V_x}$
is isomorphic to $\mathcal{G}_x^x\ltimes V_x$, and $\pi_x$ is the restriction to $V_x$ of the orbit map
$\pi\co\mathcal{G}_0\to|\mathcal{G}|$. We say that such an orbispace chart is \emph{centered at $x$}
and will always assume orbispace charts for $|\mathcal{G}|$ are of this form. Note that for such an
orbispace chart, the inclusion $V_x\to\mathcal{G}_0$ induces an essential equivalence
$\mathcal{G}_{|V_x}\to\mathcal{G}_{|\Sat(V_x)}$; see \cite[Proposition~3.7]{PPTOrbitSpace}.

\begin{lemma}
\label{lem:FiniteCharts}
Let $\mathcal{G}$ be a cocompact proper Lie groupoid and $\iota_{|\mathcal{G}|}$ an embedding of $|\mathcal{G}|$
into $\R^n$  with respect to which $\mathcal{G}$ is orbit space definable. Then there is a finite collection of compact
linear orbispace charts $\{V_i, G_i, \pi_i\}$ centered at
$x_i\in\mathcal{G}_0$, $i=1,\ldots,k$, such that each $\pi_i(V_i)\subseteq|\mathcal{G}|$
is a definable subset and $|\mathcal{G}| = \bigcup_{i=1}^k \pi_i(V_i)$.
\end{lemma}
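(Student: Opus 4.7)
The plan is to combine the slice theorem (already invoked in the proof of Proposition~\ref{prop:LieGpdOrbitDefin}) with the fact that an affine definable space admits a basis of definable open sets, and then extract a finite subcover using cocompactness. For each $x \in \mathcal{G}_0$, I would first apply \cite[Corollary~3.11]{PPTOrbitSpace} to obtain a compact linear orbispace chart $(W_x, \mathcal{G}_x^x, \pi_x)$ centered at $x$, where $W_x$ is a $\mathcal{G}_x^x$-invariant open neighborhood of the origin in a linear representation of $\mathcal{G}_x^x$ and $\pi_x$ is the restriction of the orbit map $\pi\co\mathcal{G}_0\to|\mathcal{G}|$. Since $\pi$ is an open map, $\pi_x(W_x)$ is an open subset of $|\mathcal{G}|$.

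Next, I would shrink the chart so that its image in $|\mathcal{G}|$ becomes definable. Because $\iota_{|\mathcal{G}|}$ is a topological embedding, the set $\iota_{|\mathcal{G}|}(\pi_x(W_x))$ is open in the subspace topology of $\iota_{|\mathcal{G}|}(|\mathcal{G}|)$, so I can pick a Euclidean open ball $B_x \subset \R^n$ centered at $\iota_{|\mathcal{G}|}(\pi(x))$ of radius small enough that
\[
    U_x := \iota_{|\mathcal{G}|}^{-1}\bigl(B_x \cap \iota_{|\mathcal{G}|}(|\mathcal{G}|)\bigr) \subseteq \pi_x(W_x).
\]
Open balls are semialgebraic and hence definable in our fixed o-minimal structure, so $U_x$ is a definable open subset of $|\mathcal{G}|$ containing $\pi(x)$. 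Setting $\widetilde{W}_x := \pi_x^{-1}(U_x)$ produces a $\mathcal{G}_x^x$-invariant open neighborhood of the origin in the same linear representation of $\mathcal{G}_x^x$, and hence $(\widetilde{W}_x, \mathcal{G}_x^x, \pi_x|_{\widetilde{W}_x})$ is again a compact linear orbispace chart centered at $x$, whose image in $|\mathcal{G}|$ is precisely the definable set $U_x$.

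Finally, I would observe that $\{U_x : x \in \mathcal{G}_0\}$ is an open cover of $|\mathcal{G}|$ by definable subsets, each realized as the image of a compact linear orbispace chart. Since $\mathcal{G}$ is cocompact, $|\mathcal{G}|$ is compact, so a finite subcover $\{U_{x_1}, \dots, U_{x_k}\}$ exists; the corresponding shrunken charts $(\widetilde{W}_{x_i}, \mathcal{G}_{x_i}^{x_i}, \pi_{x_i}|_{\widetilde{W}_{x_i}})$ yield the required collection. The proof is largely routine; the only point that needs verification is that the passage from $W_x$ to $\widetilde{W}_x$ preserves the compact linear orbispace chart structure, but this is immediate: $\widetilde{W}_x$ is $\mathcal{G}_x^x$-saturated in $W_x$ so $\pi_x$ still induces a homeomorphism $\mathcal{G}_x^x\backslash\widetilde{W}_x \cong U_x$, and $\widetilde{W}_x$ is still an invariant open neighborhood of the origin in the ambient representation.
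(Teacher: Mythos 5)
Your proposal is correct and follows essentially the same route as the paper's proof: apply the slice theorem to get a compact linear orbispace chart around each orbit, shrink it by intersecting its image with a small Euclidean ball (semialgebraic, hence definable) pulled back through $\iota_{|\mathcal{G}|}$, and extract a finite subcover by compactness of $|\mathcal{G}|$. The only difference is cosmetic notation, and your closing verification that the shrunken chart remains a compact linear orbispace chart is the same observation the paper makes implicitly.
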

\begin{proof}
By \cite[Proposition~3.11]{PPTOrbitSpace} each orbit $\mathcal{G}x\in |\mathcal{G}|$ is contained
in the image $\pi_x(V_x)\subseteq|\mathcal{G}|$ of a compact linear orbispace chart $(V_x,\mathcal{G}_x^x,\pi_x)$
for $|\mathcal{G}|$ centered at $x$. For each $x$, we can arrange
that $\pi_x(V_x)$ is a definable subset of $|\mathcal{G}|$ as follows. Note that
$\iota_{|\mathcal{G}|}\circ\pi_x(V_x)$ is an open neighborhood of
$\iota_{|\mathcal{G}|}(\mathcal{G}x)$ in $\iota_{|\mathcal{G}|}(|\mathcal{G}|)$ and hence there is an $\epsilon$-ball
$B_x$ about $\iota_{|\mathcal{G}|}(\mathcal{G}x)$ in $\R^n$ such that
$B_x\cap\iota_{|\mathcal{G}|}(|\mathcal{G}|)\subset\iota_{|\mathcal{G}|}\circ\pi_x(V_x)$. As $B_x$ is semialgebraic,
$B_x\cap\iota_{|\mathcal{G}|}(|\mathcal{G}|)$ is a definable set. Let
$Q_x = \iota_{|\mathcal{G}|}^{-1}\big(B_x\cap\iota_{|\mathcal{G}|}(|\mathcal{G}|)\big)$ and then $Q_x$ is a definable
subset of $|\mathcal{G}|$ that is an open neighborhood of $x$ in $\pi_x(V_x)$. Hence $\pi_x^{-1}(Q_x)$ is an open
$\mathcal{G}_x^x$-invariant neighborhood of the origin in $V_x$ so that $\big(\pi_x^{-1}(Q_x),\mathcal{G}_x^x,(\pi_x)_{|\pi_x^{-1}(Q_x)}\big)$
is a linear orbispace chart whose image is a definable subset. Hence, we redefine each $(V_x,\mathcal{G}_x^x,\pi_x)$ to be
$\big(\pi_x^{-1}(Q_x),\mathcal{G}_x^x,(\pi_x)_{|\pi_x^{-1}(Q_x)}\big)$.
By compactness, there is a finite set $(V_i,G_i,\pi_i)$, $i=1,\ldots,k$,
such that the $\pi_i(V_i)$ cover $|\mathcal{G}|$.
\end{proof}

Fix a set $\{V_i, G_i, \pi_i\}$, $i=1,\ldots,k$,
of orbispace charts covering $|\mathcal{G}|$ as in Lemma~\ref{lem:FiniteCharts}.
We now replace the $V_i$ with subsets $W_i\subseteq V_i$ such that
the $\pi_i(W_i)$ are disjoint. Specifically, define $S_1 = \pi_1(V_1)$ and
for $i \geq 1$, recursively define $S_{i+1} = \pi_{i+1}(V_{i+1})\cap(|\mathcal{G}|\smallsetminus \bigcup_{j=1}^i S_j)$.
Then the $S_i$ are disjoint definable subsets of $|\mathcal{G}|$ that cover $|\mathcal{G}|$ by definition.
Set $W_i = V_i\cap\pi_i^{-1}(S_i)$, and then each $W_i$ is a $G_i$-invariant
subset of $V_i$. Note that the $W_i$ may no longer be open and hence are not necessarily manifolds,
but the essential equivalence $\mathcal{G}_{|V_i}\to\mathcal{G}_{|\Sat(V_i)}$ of Lie groupoids obviously restricts
to an essential equivalence $\mathcal{G}_{|W_i}\to\mathcal{G}_{|\Sat(W_i)}$ of topological groupoids.
As the $S_i$ are definable subsets of $|\mathcal{G}|$, the $\mathcal{G}_{|W_i} = G_i\ltimes W_i$ are orbit space definable by
Lemma~\ref{lem:OrbDefinGpoidConstructions}(ii).
Noting that the $S_i$ are pairwise disjoint, an application of Corollary~\ref{cor:GamECAdditive} yields the following.

\begin{theorem}[($\chi_\Gamma$ for a cocompact proper Lie groupoid)]
\label{thrm:GamECLieGIntegral}
Let $\mathcal{G}$ be a cocompact proper Lie groupoid and $\iota_{|\mathcal{G}|}$ an embedding of $|\mathcal{G}|$
into $\R^n$ with respect to which $\mathcal{G}$ is orbit space definable. Let $\{V_i, G_i, \pi_i\}$,
$i=1,\ldots,k$, be a finite set of compact linear orbispace charts covering $|\mathcal{G}|$ whose images $\pi_i(V_i)$ are
definable subsets of $|\mathcal{G}|$, and define $W_i$ and $S_i$ as above. Then for any finitely presented discrete group $\Gamma$,
\begin{equation}
\label{eq:GamECLieGIntegral1}
    \chi_\Gamma(\mathcal{G})
        =   \sum\limits_{i=1}^k \chi_\Gamma(G_i\ltimes W_i).
\end{equation}
\end{theorem}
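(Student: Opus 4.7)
The plan is to apply the additivity of Corollary~\ref{cor:GamECAdditive} inductively to the disjoint definable decomposition $\{S_i\}$ of $|\mathcal{G}|$. Since $\mathcal{G}$ is cocompact and proper Lie, it is $\Gamma$-inertia definable by Lemma~\ref{lem:LieGpdGamInertDefin}, so $\chi_\Gamma(\mathcal{G})$ is defined. Each $S_i$ is a definable subset of $|\mathcal{G}|$ by construction, so the restrictions $\mathcal{G}_{|\pi^{-1}(S_i)}$ are $\Gamma$-inertia definable as noted in the discussion preceding Corollary~\ref{cor:GamECAdditive}.

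First I would identify $\chi_\Gamma(\mathcal{G}_{|\pi^{-1}(S_i)})$ with $\chi_\Gamma(G_i \ltimes W_i)$. Since $\pi(W_i) = S_i$, we have $\Sat(W_i) = \pi^{-1}(S_i)$, and the discussion preceding Theorem~\ref{thrm:GamECLieGIntegral} establishes that the inclusion $\mathcal{G}_{|W_i} \to \mathcal{G}_{|\Sat(W_i)}$ is an essential equivalence of topological groupoids. The slice-theorem isomorphism $\mathcal{G}_{|V_i} \cong G_i \ltimes V_i$ restricts, since $W_i \subseteq V_i$ is $G_i$-invariant, to an isomorphism $\mathcal{G}_{|W_i} \cong G_i \ltimes W_i$. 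By the Morita invariance of $\chi_\Gamma$ (Corollary~\ref{cor:ChiMoritaInvar}), it follows that $\chi_\Gamma(\mathcal{G}_{|\pi^{-1}(S_i)}) = \chi_\Gamma(G_i \ltimes W_i)$.

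Next I would induct on $i$, setting $T_i = \bigcup_{j=1}^i S_j$, which is definable as a finite union of definable sets, and establishing
\[
    \chi_\Gamma(\mathcal{G}_{|\pi^{-1}(T_i)}) = \sum_{j=1}^i \chi_\Gamma(G_j \ltimes W_j).
\]
The base case $i = 1$ is the identification above applied to $S_1 = T_1$. For the inductive step, I would apply Corollary~\ref{cor:GamECAdditive} to the orbit space definable groupoid $\mathcal{G}_{|\pi^{-1}(T_i)}$, whose orbit space is $T_i$, using the cover $T_i = T_{i-1} \cup S_i$. Both $T_{i-1}$ and $S_i$ are definable subsets of $T_i$, and their union is all of $T_i$. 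Since the $S_j$ are pairwise disjoint, $T_{i-1} \cap S_i = \emptyset$, so the third restriction in Corollary~\ref{cor:GamECAdditive} is the restriction of $\mathcal{G}$ to the empty set, whose $\Gamma$-Euler characteristic vanishes (the integral in Equation~\eqref{eq:GammaECGroupoid} is over the empty space). Combining this with the inductive hypothesis and the identification from the previous paragraph completes the induction. Taking $i = k$ gives $T_k = |\mathcal{G}|$ and hence Equation~\eqref{eq:GamECLieGIntegral1}.

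The argument is essentially routine given the machinery already established; the only subtlety to verify carefully is that Corollary~\ref{cor:GamECAdditive} applies with one of the pieces being the empty set, which requires checking that the empty restriction has $\chi_\Gamma = 0$ and that the required essential equivalences (here, identity maps) exist trivially. Everything else follows from bookkeeping with definable subsets and from the fact that the slice-theorem isomorphism $\mathcal{G}_{|V_i} \cong G_i \ltimes V_i$ restricts to any $G_i$-invariant subset of $V_i$, so no real obstacle arises.
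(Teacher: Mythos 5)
Your proof is correct and follows essentially the same route as the paper: the paper likewise sets up the disjoint definable decomposition $\{S_i\}$, notes that the slice-theorem isomorphism restricts to give $\mathcal{G}_{|W_i} = G_i\ltimes W_i$ with $\mathcal{G}_{|W_i}\to\mathcal{G}_{|\Sat(W_i)}$ an essential equivalence, and then invokes Corollary~\ref{cor:GamECAdditive} on the pairwise disjoint $S_i$. Your write-up merely makes explicit the induction and the trivial empty-intersection case that the paper leaves implicit.
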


\begin{remark}
\label{rem:GamECLieGIntegral}
If each $W_i$ can be chosen to be a semialgebraic subset of the representation space containing $V_i$, then
applying Theorem~\ref{thrm:TranslationECNonIterat}, Equation~\eqref{eq:GamECLieGIntegral1} can be written
\begin{equation}
\label{eq:GamECLieGIntegral2}
    \chi_\Gamma(\mathcal{G})
        =   \sum\limits_{i=1}^k \int_{G_i\!\backslash\!\HOM(\Gamma,G_i)}
                \chi\big( C_{G_i}(\phi)\backslash W_i^{\langle\phi\rangle}(\phi)\big) \, d\chi([\phi]_{G_i}).
\end{equation}
This is the case, for instance, if each $V_i$ is a semialgebraic subset of the corresponding representation
space, $\iota_{|\mathcal{G}|}(|\mathcal{G}|)$ is semialgebraic, and the map $\pi_i\co V_i\to|\mathcal{G}|$ is
a morphism of affine definable sets in the o-minimal structure of semialgebraic sets.
\end{remark}


\subsection{Abelian extensions of translation groupoids}
\label{subsec:Extensions}

Let $\mathcal{G}$ be a proper Lie groupoid such that $|\mathcal{G}|$ is connected. Let us briefly recall the following language from
\cite{TrentinagliaGlobStruc,TrentinagliaRoleReps}; while not necessary for the sequel, it will help motivate our
observations in this section. A \emph{representation} of $\mathcal{G}$ is a vector bundle
$E\to\mathcal{G}_0$ and a Lie groupoid homomorphism $\mathcal{G}\to\GL(E)$ where
$\GL(E)$ denotes the groupoid with objects $\mathcal{G}_0$ and arrows given by
linear isomorphisms between fibers $E_x$; see \cite[Section 1.1]{TrentinagliaRoleReps} for more details.
A representation $\mathcal{G}\to\GL(E)$ of $\mathcal{G}$ is \emph{effective} at $x\in\mathcal{G}_0$ if the kernel of
the restriction $\mathcal{G}_x^x\to\GL(E_x)$ is contained in the \emph{ineffective part} of $\mathcal{G}_x^x$, the kernel
of the action of $\mathcal{G}_x^x$ on a slice at $x$; a representation is \emph{globally effective} if it is effective
at every $x\in\mathcal{G}_0$; see \cite[Definition 2]{TrentinagliaGlobStruc}. The groupoid $\mathcal{G}$
is \emph{reflexive} if for each $x\in\mathcal{G}_0$, there is a representation $\mathcal{G}\to\GL(E)$
such that the restriction $\mathcal{G}_x^x\to\GL(E_x)$ is injective. It is
is \emph{parareflexive} if for each $x\in\mathcal{G}_0$, there is a representation $\mathcal{G}\to\GL(E)$ that is effective at $x$.
Note that if $\mathcal{G}$ is Morita equivalent
(as a Lie groupoid) to a translation groupoid, then it is reflexive; see \cite{TrentinagliaGlobStruc}. See
\cite[Example~2.10]{TrentinagliaRoleReps} for an example of a proper Lie groupoid that is not reflexive and hence not Morita
equivalent as a Lie groupoid to a translation groupoid.

Trentinaglia demonstrated in
\cite[Corollary 4]{TrentinagliaGlobStruc} that if a proper Lie groupoid $\mathcal{G}$ with connected orbit space
admits a globally effective
representation, then it is Lie groupoid Morita equivalent to an extension of a translation groupoid by a bundle of compact
groups; see Definitions~\ref{def:BundleGrps} and \ref{def:ExtensionTrans} below.
Note that Trentinaglia states this as a corollary to a theorem with a strong hypothesis on the codimensions of orbits,
but the corollary and its proof only require the existence of a globally effective representation.
Note that in \cite[p.709]{TrentinagliaGlobStruc}, Trentinaglia poses the questions of whether every
proper Lie groupoid is parareflexive, and whether every parareflexive proper Lie groupoid admits
a globally effective representation. As far as we are aware, these questions remain open.

Here, we consider
$\chi_\Gamma(\mathcal{G})$ of such a groupoid. As we will see, if the isotropy groups of $\mathcal{G}$ are
abelian and $\Gamma = \Z^\ell$, then $\chi_\Gamma(\mathcal{G})$ is closely related to the $\Gamma$-Euler
characteristic of the corresponding translation groupoid, though this relationship does not appear to extend
to the nonabelian case.

We begin with the following.

\begin{definition}[({Bundle of Lie groups, \cite[Appendix~A.1]{MackenzieLGLADiffGeom},
\cite[Section~1.3(c)]{MoerdijkClassifRegular}})]
\label{def:BundleGrps}
A \emph{bundle of Lie groups} is a Lie groupoid $\mathcal{G}$ such that $s = t$. A bundle of Lie groups is
\emph{locally trivial} if each $x\in\mathcal{G}_0$ is contained in a neighborhood $U$ such that
$\mathcal{G}_{|U}$ is diffeomorphic to the trivial bundle of groups $\mathcal{G}_x^x\times U$.
By a \emph{bundle of compact Lie groups}, we will mean a locally trivial bundle of Lie groups such that
$s$ is proper.
\end{definition}

If $\mathcal{G}$ is a bundle of compact Lie groups,
then $|\mathcal{G}| = \mathcal{G}_0$ and for each $x,y\in\mathcal{G}_0$, $\mathcal{G}_x^x\simeq\mathcal{G}_y^y$. In
particular, an orbit space definable structure on $\mathcal{G}$ is an embedding $\iota_{|\mathcal{G}|}$ of
$|\mathcal{G}| = \mathcal{G}_0$ with respect to which $\mathcal{G}_0$ is an affine definable space.

\begin{lemma}[($\chi_\Gamma$ for bundles of compact Lie groups)]
\label{lem:BundleGroups}
Suppose $\mathcal{G}$ is a bundle of compact Lie groups such that $\mathcal{G}_0$ is connected and
$\iota_{|\mathcal{G}|}$ is an embedding of $|\mathcal{G}|$ with respect to which $\mathcal{G}$ is orbit
space definable. Then for any finitely presented discrete group $\Gamma$ and any $x\in\mathcal{G}_0$,
\[
    \chi_\Gamma(\mathcal{G})
        =   \chi(\mathcal{G}_0)\chi\big(\Ad_{\mathcal{G}_x^x}\!\backslash\!\HOM(\Gamma,\mathcal{G}_x^x) \big)
        =   \chi(\mathcal{G}_0)\chi_\Gamma(\mathcal{G}_x^x),
\]
where $\chi_\Gamma(\mathcal{G}_x^x)$ is the $\Gamma$-Euler characteristic of
$\mathcal{G}_x^x$, treated as a groupoid with a single object.
\end{lemma}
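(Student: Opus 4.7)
The plan is to exploit the fact that, since $s_\mathcal{G}=t_\mathcal{G}$, every $\mathcal{G}$-orbit is a single point, so $|\mathcal{G}|=\mathcal{G}_0$ and the integrand in Definition~\ref{def:ECGroupoid} depends only on $y\in\mathcal{G}_0$. Local triviality together with connectedness will force all isotropy groups $\mathcal{G}_y^y$ to be abstractly isomorphic compact Lie groups, making this integrand constant; the integral then collapses into a product.

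First, I would note that the identification $|\mathcal{G}|=\mathcal{G}_0$ turns $\iota_{|\mathcal{G}|}$ into an embedding of $\mathcal{G}_0$ as an affine definable space, so that $\chi(\mathcal{G}_0)=\chi(|\mathcal{G}|)$ is defined. Next, I would argue that every $\mathcal{G}_y^y$ is isomorphic to $\mathcal{G}_x^x$ as a compact Lie group: local triviality provides, at each $y_0\in\mathcal{G}_0$, a neighborhood $U$ and an isomorphism $\mathcal{G}_{|U}\cong\mathcal{G}_{y_0}^{y_0}\times U$, which restricts to $\mathcal{G}_y^y\cong\mathcal{G}_{y_0}^{y_0}$ for every $y\in U$. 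Covering a path from $x$ to an arbitrary $y\in\mathcal{G}_0$ by finitely many such trivializing neighborhoods (using connectedness of $\mathcal{G}_0$) then yields $\mathcal{G}_x^x\cong\mathcal{G}_y^y$ by composition.

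Given any group isomorphism $\alpha\co\mathcal{G}_y^y\to\mathcal{G}_x^x$, the map $\phi\mapsto\alpha\circ\phi$ is a homeomorphism $\HOM(\Gamma,\mathcal{G}_y^y)\to\HOM(\Gamma,\mathcal{G}_x^x)$ intertwining the pointwise conjugation actions via $\alpha$, hence descends to a homeomorphism of the orbit spaces. By Theorem~\ref{thrm:ECHomeoInvar}, the value $c:=\chi\bigl(\Ad_{\mathcal{G}_x^x}\!\backslash\!\HOM(\Gamma,\mathcal{G}_x^x)\bigr)$ is independent of the chosen $y\in\mathcal{G}_0$, so the integrand in Equation~\eqref{eq:GammaECGroupoid} is the constant (hence bounded constructible) function $c$ on $|\mathcal{G}|$. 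Integrating a constant then yields $\chi_\Gamma(\mathcal{G})=c\,\chi(|\mathcal{G}|)=\chi(\mathcal{G}_0)\,\chi\bigl(\Ad_{\mathcal{G}_x^x}\!\backslash\!\HOM(\Gamma,\mathcal{G}_x^x)\bigr)$, which is the first equality. The second equality is immediate from Definition~\ref{def:ECGroupoid} applied to $\mathcal{G}_x^x$ viewed as a groupoid with a single object, whose orbit space is a point of Euler characteristic $1$ and whose unique isotropy group is $\mathcal{G}_x^x$ itself.

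The only mildly delicate point is the global consequence of local triviality used to identify the isomorphism type of the fibers uniformly; once this is in hand, the remainder of the argument is a direct application of Theorem~\ref{thrm:ECHomeoInvar} and the elementary fact that the integral of a constant function $c$ on an affine definable space $X$ with respect to $\chi$ equals $c\,\chi(X)$.
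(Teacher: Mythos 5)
Your proposal is correct and follows essentially the same route as the paper: identify $|\mathcal{G}|$ with $\mathcal{G}_0$, observe that the integrand of Equation~\eqref{eq:GammaECGroupoid} is constant because all isotropy groups of a locally trivial bundle over a connected base are isomorphic, and integrate the constant. The only difference is that you spell out the local-triviality-plus-connectedness argument and the invariance of $\chi\bigl(\Ad_{\mathcal{G}_x^x}\!\backslash\!\HOM(\Gamma,\mathcal{G}_x^x)\bigr)$ under group isomorphism, both of which the paper asserts without proof.
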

\begin{proof}
Using Equation~\eqref{eq:GammaECGroupoid} and the fact that $\mathcal{G}_x^x$ does not depend on $x$,
\begin{align*}
    \chi_\Gamma(\mathcal{G})
        &=      \int_{ |\mathcal{G}|}
                    \chi\big(\mathcal{G}_{x}^x\backslash\!\HOM(\Gamma,\mathcal{G}_{x}^x) \big) \, d\chi(\mathcal{G}x)
        \\&=    \int_{\mathcal{G}_0}
                    \chi\big(\mathcal{G}_{x}^x\backslash\!\HOM(\Gamma,\mathcal{G}_x^x) \big) \, d\chi(x)
        \\&=    \chi(\mathcal{G}_0)\chi\big(\mathcal{G}_{x}^x\backslash\!\HOM(\Gamma,\mathcal{G}_x^x) \big).
        \qedhere
\end{align*}
\end{proof}

Following \cite{TrentinagliaGlobStruc}, we consider extensions of the following form.

\begin{definition}[(Extension of a translation groupoid)]
\label{def:ExtensionTrans}
A proper Lie groupoid $\mathcal{G}$ is an \emph{extension of a translation groupoid by a bundle of compact
Lie groups} if there is a short exact sequence
\begin{equation}
\label{eq:ExtensionTrans}
    1\to\mathcal{B}
    \stackrel{\nu}{\rightarrow}
        \mathcal{G}
    \stackrel{\rho}{\rightarrow}
    H\ltimes\mathcal{G}_0 \to 1
\end{equation}
such that $\mathcal{B}$ is a bundle of compact groups with object space $\mathcal{B}_0 = \mathcal{G}_0$,
$\nu_0$ and $\rho_0$ are the identity maps, and $H$ is a compact Lie group acting smoothly
on $\mathcal{G}_0$.
\end{definition}

If $\mathcal{G}$ is an extension as in Definition~\ref{def:ExtensionTrans},
then by the surjectivity of $\rho$, the orbit spaces $|\mathcal{G}|$ and $|H\ltimes\mathcal{G}_0|$
coincide. Specifically, for $x,y\in\mathcal{G}_0$, there is a $g\in\mathcal{G}$ with
$s(g) = x$ and $t(g) = y$ if and only if there is an $h\in H$ such that $hx = y$.

\begin{theorem}[($\chi_{\Z^\ell}$ for abelian extensions of translation groupoids)]
\label{thrm:AbelExtension}
Let $\mathcal{G}$ be an extension of a translation groupoid by a bundle of compact Lie groups as in
Equation~\eqref{eq:ExtensionTrans}, and let $\iota_{|\mathcal{G}|}$ be an embedding of $|\mathcal{G}|$
into Euclidean space with respect to which both $\mathcal{G}$ and $H\ltimes\mathcal{G}_0$ are orbit space
definable. Suppose that for each $x\in\mathcal{G}_0$, the isotropy group $\mathcal{G}_x^x$
is abelian. Then for each $\ell\geq 0$ and each $x \in\mathcal{G}_0$,
\[
    \chi_{\Z^\ell}(\mathcal{G}) = \chi\big(\HOM(\Z^\ell,\mathcal{B}_x^x) \big)
        \chi_{\Z^\ell}(H\ltimes\mathcal{G}_0).
\]
\end{theorem}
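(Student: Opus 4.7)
The plan is to evaluate $\chi_{\Z^\ell}(\mathcal{G})$ directly from Definition~\ref{def:ECGroupoid} and reduce it to $\chi_{\Z^\ell}(H\ltimes\mathcal{G}_0)$ via a pointwise multiplicativity of Euler characteristics induced by the extension~\eqref{eq:ExtensionTrans}. For each $x \in \mathcal{G}_0$, applying the isotropy functor to the short exact sequence yields a short exact sequence of compact abelian Lie groups
$$1 \to \mathcal{B}_x^x \to \mathcal{G}_x^x \to H_x \to 1,$$
where $H_x$ denotes the $H$-isotropy of $x$ in the $H$-action on $\mathcal{G}_0$, and $H_x$ is abelian as a quotient of the abelian $\mathcal{G}_x^x$. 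Because conjugation acts trivially on abelian groups, the integrands in Equation~\eqref{eq:GammaECGroupoid} applied to $\mathcal{G}$ and to $H\ltimes\mathcal{G}_0$ simplify to $\chi\big((\mathcal{G}_x^x)^\ell\big)$ and $\chi(H_x^\ell)$ respectively, after the natural identification $\HOM(\Z^\ell, K) = K^\ell$ for abelian $K$ and the coincidence $|\mathcal{G}| = |H\ltimes\mathcal{G}_0|$ induced by surjectivity of $\rho$.

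The crux of the proof is the pointwise identity
$$\chi\big((\mathcal{G}_x^x)^\ell\big) = \chi\big(\HOM(\Z^\ell, \mathcal{B}_x^x)\big) \cdot \chi(H_x^\ell),$$
which I would derive by combining multiplicativity of $\chi$ under products with the identity $\chi(\mathcal{G}_x^x) = \chi(\mathcal{B}_x^x) \chi(H_x)$. The latter holds because $\mathcal{G}_x^x \to H_x$ is a principal $\mathcal{B}_x^x$-bundle of compact Lie groups; alternatively, it can be verified by a dimension dichotomy, since if any of the three groups has positive dimension then all three Euler characteristics vanish, while if all are finite the identity reduces to Lagrange's theorem (noting that $\chi$ of a finite group equals its order). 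Since $\mathcal{B}$ is a locally trivial bundle of compact Lie groups and $\mathcal{G}_0$ is (taken to be) connected as in the setting of Lemma~\ref{lem:BundleGroups}, the isomorphism type of $\mathcal{B}_x^x$ is independent of $x$, so $C := \chi\big(\HOM(\Z^\ell, \mathcal{B}_x^x)\big)$ is a constant that factors out of the integral:
$$\chi_{\Z^\ell}(\mathcal{G}) = \int_{|\mathcal{G}|} C \cdot \chi(H_x^\ell) \, d\chi(\mathcal{G}x) = C \int_{|\mathcal{G}|} \chi(H_x^\ell) \, d\chi(\mathcal{G}x) = C \cdot \chi_{\Z^\ell}(H\ltimes\mathcal{G}_0).$$

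The main subtlety I anticipate is verifying that the resulting integrands are genuinely constructible on the affine definable space $|\mathcal{G}|$, which in turn justifies both the factoring out of $C$ and the recognition of the remaining integral as $\chi_{\Z^\ell}(H\ltimes\mathcal{G}_0)$. This follows because both $\mathcal{G}$ and $H\ltimes\mathcal{G}_0$ are orbit space definable, so the partitions of $|\mathcal{G}|$ by weak orbit types under each groupoid structure are finite and definable by Definition~\ref{def:OrbDefinGpoid}(iii); the functions $\mathcal{G}x \mapsto \chi\big((\mathcal{G}_x^x)^\ell\big)$ and $\mathcal{G}x \mapsto \chi(H_x^\ell)$ are constant on the common refinement, which remains a finite definable partition. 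If the connectedness hypothesis on $\mathcal{G}_0$ is relaxed, an analogous argument using the additivity of Corollary~\ref{cor:GamECAdditive} would yield a formula stratum by stratum, which reduces to the stated identity under the implicit connectedness.
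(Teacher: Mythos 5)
Your proof is correct and follows essentially the same route as the paper: both reduce to the pointwise identity $\chi\big(\HOM(\Z^\ell,\mathcal{G}_x^x)\big) = \chi\big(\HOM(\Z^\ell,\mathcal{B}_x^x)\big)\,\chi\big(\HOM(\Z^\ell,H_x)\big)$ for the abelian isotropy groups and then factor the constant $\chi\big(\HOM(\Z^\ell,\mathcal{B}_x^x)\big)$ out of the integral over $|\mathcal{G}|=|H\ltimes\mathcal{G}_0|$. The only difference is in the justification of that pointwise step: the paper applies $\HOM(\Z^\ell,-)$ to the exact sequence of isotropy groups and uses multiplicativity of $\chi$ on the resulting fiber bundle of Hom-spaces, whereas you use the identification $\HOM(\Z^\ell,K)\cong K^\ell$ for abelian $K$ together with $\chi(\mathcal{G}_x^x)=\chi(\mathcal{B}_x^x)\chi(H_x)$ from the single bundle $\mathcal{G}_x^x\to H_x$ --- the same multiplicativity applied one level down.
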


Of course, by Corollary~\ref{cor:ChiMoritaInvar}, the theorem applies to a groupoid that is
Lie groupoid Morita equivalent to such an extension. The hypothesis that both
$\mathcal{G}$ and $H\ltimes\mathcal{G}_0$ are orbit space definable is satisfied, for instance,
when $\mathcal{G}$ is orbit space definable and the weak isotropy types of
$\mathcal{G}$ and $H\ltimes\mathcal{G}_0$ coincide.

\begin{proof}
First note that for each $x\in\mathcal{G}_0$, as $\mathcal{G}_x^x$ is abelian, the subgroup $\nu(\mathcal{B}_x^x)$
and quotient group $\ker(\rho_{|\mathcal{G}_x^x})\backslash\mathcal{G}_x^x$ are abelian as well so that the isotropy groups
of $\mathcal{B}$ and $H\ltimes\mathcal{G}_0$ are also abelian. As $|\mathcal{G}| = |H\ltimes\mathcal{G}_0|$,
we have
\begin{align*}
    \chi_{\Z^\ell}(\mathcal{G})
        &=      \int_{|\mathcal{G}|}
                    \chi\big(\mathcal{G}_{x}^x\backslash\!\HOM(\Z^\ell,\mathcal{G}_{x}^x) \big) \, d\chi(\mathcal{G}x)
        \\&=    \int_{|H\ltimes\mathcal{G}_0|}
                    \chi\big(\HOM(\Z^\ell,\mathcal{G}_{x}^x) \big) \, d\chi(Hx).
\end{align*}
By the exactness of $\HOM(\Z^\ell,\cdot)$ on abelian groups, see
\cite[Chapter I, Theorems 2.1 and Proposition 4.4]{HiltonStammbach}, we have for each $x\in\mathcal{G}_0$
an exact sequence
\[
    1\to\HOM(\Z^\ell,\mathcal{B}_x^x)
    \stackrel{\nu_{|\mathcal{B}_x^x}^\ast}{\rightarrow}
        \HOM(\Z^\ell,\mathcal{G}_x^x)
    \stackrel{\rho_{|\mathcal{G}_x^x}^\ast}{\rightarrow}
        \HOM(\Z^\ell,H_x)
    \to 1,
\]
where $\nu_{|\mathcal{B}_x^x}^\ast$ and $\rho_{|\mathcal{G}_x^x}^\ast$ are the pullbacks.
Identifying $\HOM(\Z^\ell,\mathcal{G}_x^x)$ with $(\mathcal{G}_x^x)^\ell$ and $\HOM(\Z^\ell,\mathcal{B}_x^x)$
with $(\mathcal{B}_x^x)^\ell$, the map $\rho_{|\mathcal{G}_x^x}^\ast$ is realized as the quotient map of the Lie
group $(\mathcal{G}_x^x)^\ell$ by the closed subgroup $\nu(\mathcal{B}_x^x)^\ell$. Hence
$\HOM(\Z^\ell,\mathcal{G}_x^x)\to\HOM(\Z^\ell,H_x)$ is a fiber bundle with fiber $\HOM(\Z^\ell,\mathcal{B}_x^x)$.
By the multiplicativity of $\chi$ on fiber bundles,
$\chi\big(\HOM(\Z^\ell,\mathcal{G}_{x}^x) \big) = \chi\big(\HOM(\Z^\ell,\mathcal{B}_x^x)\big)
\chi\big(\HOM(\Z^\ell,H_x)\big)$, and we have
\begin{align*}
    \chi_{\Z^\ell}(\mathcal{G})
        &=      \int_{|H\ltimes\mathcal{G}_0|}
                    \chi\big(\HOM(\Z^\ell,\mathcal{B}_x^x)\big)
                    \chi\big(\HOM(\Z^\ell,H_x)\big) \, d\chi(Hx)
        \\&=    \chi\big(\HOM(\Z^\ell,\mathcal{B}_x^x)\big) \int_{|H\ltimes\mathcal{G}_0|}
                    \chi\big(\HOM(\Z^\ell,H_x)\big) \, d\chi(Hx)
        \\&=    \chi\big(\HOM(\Z^\ell,\mathcal{B}_x^x)\big)\chi_{\Z^\ell}\big(H\ltimes G\big).
        \qedhere
\end{align*}
\end{proof}

Note that if $\mathcal{B}_x^x$ is a compact abelian Lie group such that $\dim\mathcal{B}_x^x > 0$,
then $\chi(\mathcal{B}_x^x) = 0$. Hence $\chi\big(\HOM(\Z^\ell,\mathcal{B}_x^x)\big) = \chi(\mathcal{B}_x^x)^\ell = 0$
as well. Therefore, for a groupoid $\mathcal{G}$ satisfying the hypotheses of Theorem~\ref{thrm:AbelExtension},
$\chi_{\Z^\ell}(\mathcal{G}) = 0$ unless $\mathcal{B}_x^x$ is finite.

Finally, we have the following, illustrating that we cannot expect a generalization of
Theorem~\ref{thrm:AbelExtension} to the non-abelian case.

\begin{example}
\label{ex:NonabelExtension}
Let $\mathcal{G}_0 = \{x\}$ be a single point, let $\mathcal{B} = \operatorname{SO}(2,\R)$, let $H = \Z/2\Z$,
and let $\mathcal{G} = \operatorname{O}(2,\R)$. We define $\nu$ and $\rho$ to be the usual expression of
$\operatorname{O}(2,\R) = \Z/2\Z\ltimes\operatorname{SO}(2,\R)$, i.e., $\nu$ is the embedding of
$\operatorname{SO}(2,\R)$ into $\operatorname{O}(2,\R)$ and $\rho$ is the quotient map to the component group.
Treating these groups as groupoids with a single object, $\operatorname{O}(2,\R)$ is then an extension of a
translation groupoid by a bundle of compact groups. As $\operatorname{SO}(2,\R)$ and $\Z/2\Z$ are abelian
so that $|\Lambda_\Z\big(\operatorname{SO}_2(\R)\big)| = \operatorname{SO}(2,\R)$ and
$|\Lambda_\Z\big(\Z/2\Z\big)| = \Z/2\Z$, we have $\chi\big(\operatorname{SO}(2,\R)\big) = 0$
and $\chi\big(\Z/2\Z\big) = 2$. However, $|\Lambda_\Z\big(\operatorname{O}(2,\R)\big)|$
is the set of conjugacy classes of $\operatorname{O}(2,\R)$, which is homeomorphic to the disjoint union of
an interval and a point, so that $\chi\big(\operatorname{O}(2,\R)\big) = 2$.
\end{example}


\bibliographystyle{amsplainAbbrv}
\bibliography{GE}

\providecommand{\bysame}{\leavevmode\hbox to3em{\hrulefill}\thinspace}
\providecommand{\MR}{\relax\ifhmode\unskip\space\fi MR }
\providecommand{\MRhref}[2]{%
  \href{http://www.ams.org/mathscinet-getitem?mr=#1}{#2}
}
\providecommand{\href}[2]{#2}
\begin{thebibliography}{10}

\bibitem{AdemGomez}
A.~Adem and J.~M. G\'{o}mez, \emph{Equivariant {$K$}-theory of compact {L}ie
  group actions with maximal rank isotropy}, J. Topol. \textbf{5} (2012),
  no.~2, 431--457. \MR{2928083}

\bibitem{AdemLeidaRuan}
A.~Adem, J.~Leida, and Y.~Ruan, \emph{Orbifolds and stringy topology},
  Cambridge Tracts in Mathematics, vol. 171, Cambridge University Press,
  Cambridge, 2007. \MR{2359514}

\bibitem{AdemRuanTwisted}
A.~Adem and Y.~Ruan, \emph{Twisted orbifold {$K$}-theory}, Comm. Math. Phys.
  \textbf{237} (2003), no.~3, 533--556. \MR{1993337}

\bibitem{AtiyahSegal}
M.~Atiyah and G.~Segal, \emph{On equivariant {E}uler characteristics}, J. Geom.
  Phys. \textbf{6} (1989), no.~4, 671--677. \MR{1076708}

\bibitem{BehrendGinoteaStringTop}
K.~Behrend, G.~Ginot, B.~Noohi, and P.~Xu, \emph{String topology for stacks},
  Ast\'{e}risque (2012), no.~343, xiv+169. \MR{2977576}

\bibitem{BehrendXuDiffStackGerbes}
K.~Behrend and P.~Xu, \emph{Differentiable stacks and gerbes}, J. Symplectic
  Geom. \textbf{9} (2011), no.~3, 285--341. \MR{2817778}

\bibitem{BekeInvarEC}
T.~Beke, \emph{Topological invariance of the combinatorial {E}uler
  characteristic of tame spaces}, Homology Homotopy Appl. \textbf{13} (2011),
  no.~2, 165--174. \MR{2854333}

\bibitem{BochnakCosteRoyBook}
J.~Bochnak, M.~Coste, and M.-F. Roy, \emph{Real algebraic geometry}, Ergebnisse
  der Mathematik und ihrer Grenzgebiete (3) [Results in Mathematics and Related
  Areas (3)], vol.~36, Springer-Verlag, Berlin, 1998, Translated from the 1987
  French original, Revised by the authors. \MR{1659509}

\bibitem{BorelMoore}
A.~Borel and J.~C. Moore, \emph{Homology theory for locally compact spaces},
  Michigan Math. J. \textbf{7} (1960), 137--159. \MR{131271}

\bibitem{BrownSurvey}
R.~Brown, \emph{From groups to groupoids: a brief survey}, Bull. London Math.
  Soc. \textbf{19} (1987), no.~2, 113--134. \MR{872125}

\bibitem{BrumfielQuot}
G.~W. Brumfiel, \emph{Quotient spaces for semialgebraic equivalence relations},
  Math. Z. \textbf{195} (1987), no.~1, 69--78. \MR{888127}

\bibitem{BryanFulman}
J.~Bryan and J.~Fulman, \emph{Orbifold {E}uler characteristics and the number
  of commuting {$m$}-tuples in the symmetric groups}, Ann. Comb. \textbf{2}
  (1998), no.~1, 1--6. \MR{1682916}

\bibitem{BrylinskiCyclic}
J.-L. Brylinski, \emph{Cyclic homology and equivariant theories}, Ann. Inst.
  Fourier (Grenoble) \textbf{37} (1987), no.~4, 15--28. \MR{927388}

\bibitem{ChevalleyThryLieGrps}
C.~Chevalley, \emph{Theory of {L}ie groups. {I}}, Princeton University Press,
  Princeton, N. J., 1946 1957. \MR{0082628}

\bibitem{ChoiParkSuhSemialgSlice}
M.-J. Choi, D.~H. Park, and D.~Y. Suh, \emph{The existence of semialgebraic
  slices and its applications}, J. Korean Math. Soc. \textbf{41} (2004), no.~4,
  629--646. \MR{2068144}

\bibitem{CosteRealAlgSets}
M.~Coste, \emph{Real algebraic sets}, Arc spaces and additive invariants in
  real algebraic and analytic geometry, Panor. Synth\`eses, vol.~24, Soc. Math.
  France, Paris, 2007, pp.~1--32. \MR{2409684}

\bibitem{CrainicMestre}
M.~Crainic and J.~a.~N. Mestre, \emph{Orbispaces as differentiable stratified
  spaces}, Lett. Math. Phys. \textbf{108} (2018), no.~3, 805--859. \MR{3765980}

\bibitem{CrainicStruchiner}
M.~Crainic and I.~Struchiner, \emph{On the linearization theorem for proper
  {L}ie groupoids}, Ann. Sci. \'{E}c. Norm. Sup\'{e}r. (4) \textbf{46} (2013),
  no.~5, 723--746. \MR{3185351}

\bibitem{CurryGhristEAEulerCalc}
J.~Curry, R.~Ghrist, and M.~Robinson, \emph{Euler calculus with applications to
  signals and sensing}, Advances in applied and computational topology, Proc.
  Sympos. Appl. Math., vol.~70, Amer. Math. Soc., Providence, RI, 2012,
  pp.~75--145. \MR{2963602}

\bibitem{delHoyoFernandesMetricLieGpd}
M.~del Hoyo and R.~L. Fernandes, \emph{Riemannian metrics on {L}ie groupoids},
  J. Reine Angew. Math. \textbf{735} (2018), 143--173. \MR{3757473}

\bibitem{DelfsKnebuschIntro}
H.~Delfs and M.~Knebusch, \emph{An introduction to locally semialgebraic
  spaces}, vol.~14, 1984, Ordered fields and real algebraic geometry (Boulder,
  Colo., 1983), pp.~945--963. \MR{773141}

\bibitem{DixonHarveyEA}
L.~Dixon, J.~Harvey, C.~Vafa, and E.~Witten, \emph{Strings on orbifolds. {II}},
  Nuclear Phys. B \textbf{274} (1986), no.~2, 285--314. \MR{851703}

\bibitem{FarsiPflaumSeaton2}
C.~Farsi, M.~J. Pflaum, and C.~Seaton, \emph{Differentiable stratified
  groupoids and a de {R}ham theorem for inertia spaces},  (2015),
  \texttt{arXiv:1511.00371 [math.DG]}.

\bibitem{FarsiPflaumSeaton1}
\bysame, \emph{Stratifications of inertia spaces of compact {L}ie group
  actions}, J. Singul. \textbf{13} (2015), 107--140. \MR{3343617}

\bibitem{FarsiSeaGenTwistSec}
C.~Farsi and C.~Seaton, \emph{Generalized twisted sectors of orbifolds},
  Pacific J. Math. \textbf{246} (2010), no.~1, 49--74. \MR{2645879}

\bibitem{FarsiSeaVectorFld}
\bysame, \emph{Nonvanishing vector fields on orbifolds}, Trans. Amer. Math.
  Soc. \textbf{362} (2010), no.~1, 509--535. \MR{2550162}

\bibitem{FarsiSeaGenOrbEuler}
\bysame, \emph{Generalized orbifold {E}uler characteristics for general
  orbifolds and wreath products}, Algebr. Geom. Topol. \textbf{11} (2011),
  no.~1, 523--551. \MR{2783237}

\bibitem{GZEMH-HigherOrbEulerCompactGroup}
S.~M. Gusein-Zade, I.~Luengo, and A.~Melle-Hern\'{a}ndez, \emph{Higher-order
  orbifold {E}uler characteristics for compact {L}ie group actions}, Proc. Roy.
  Soc. Edinburgh Sect. A \textbf{145} (2015), no.~6, 1215--1222. \MR{3427605}

\bibitem{HiltonStammbach}
P.~J. Hilton and U.~Stammbach, \emph{A course in homological algebra},
  Springer-Verlag, New York-Berlin, 1971, Graduate Texts in Mathematics, Vol.
  4. \MR{0346025}

\bibitem{HirzebruchHoefer}
F.~Hirzebruch and T.~H\"{o}fer, \emph{On the {E}uler number of an orbifold},
  Math. Ann. \textbf{286} (1990), no.~1-3, 255--260. \MR{1032933}

\bibitem{KuhnCharacter}
N.~J. Kuhn, \emph{Character rings in algebraic topology}, Advances in homotopy
  theory ({C}ortona, 1988), London Math. Soc. Lecture Note Ser., vol. 139,
  Cambridge Univ. Press, Cambridge, 1989, pp.~111--126. \MR{1055872}

\bibitem{LeinsterEulerCharCategory}
T.~Leinster, \emph{The {E}uler characteristic of a category}, Doc. Math.
  \textbf{13} (2008), 21--49. \MR{2393085}

\bibitem{MackenzieLGLADiffGeom}
K.~Mackenzie, \emph{Lie groupoids and {L}ie algebroids in differential
  geometry}, London Mathematical Society Lecture Note Series, vol. 124,
  Cambridge University Press, Cambridge, 1987. \MR{896907}

\bibitem{McCroryParusinski}
C.~McCrory and A.~Parusi\'{n}ski, \emph{Algebraically constructible functions},
  Ann. Sci. \'{E}cole Norm. Sup. (4) \textbf{30} (1997), no.~4, 527--552.
  \MR{1456244}

\bibitem{MetzlerTopSmoothStacks}
D.~Metzler, \emph{Topological and smooth stacks},  (2003),
  \texttt{arXiv:math/0306176 [math.DG]}.

\bibitem{MoerdijkClassifRegular}
I.~Moerdijk, \emph{On the classification of regular groupoids},  (2002),
  \texttt{arXiv:math/0203099 [math.DG]}.

\bibitem{MoerdijkMrcun}
I.~Moerdijk and J.~Mr\v{c}un, \emph{Introduction to foliations and {L}ie
  groupoids}, Cambridge Studies in Advanced Mathematics, vol.~91, Cambridge
  University Press, Cambridge, 2003. \MR{2012261}

\bibitem{MoerdijkToposGpoid}
I.~Moerdijk, \emph{Toposes and groupoids}, Categorical algebra and its
  applications ({L}ouvain-{L}a-{N}euve, 1987), Lecture Notes in Math., vol.
  1348, Springer, Berlin, 1988, pp.~280--298. \MR{975977}

\bibitem{MrcunThesis}
J.~Mr\v{c}un, \emph{Stability and invariants of {H}ilsum-{S}kandalis maps},
  1996, Ph.D. Thesis, Universiteit Utrecht.

\bibitem{Munkres}
J.~R. Munkres, \emph{Topology}, Prentice Hall, Inc., Upper Saddle River, NJ,
  2000, Second edition of [ MR0464128]. \MR{3728284}

\bibitem{ParkSuhLinEmbed}
D.~H. Park and D.~Y. Suh, \emph{Linear embeddings of semialgebraic
  {$G$}-spaces}, Math. Z. \textbf{242} (2002), no.~4, 725--742. \MR{1981195}

\bibitem{PflaumOrbispace}
M.~J. Pflaum, \emph{On the deformation quantization of symplectic orbispaces},
  Differential Geom. Appl. \textbf{19} (2003), no.~3, 343--368. \MR{2013100}

\bibitem{PPTOrbitSpace}
M.~J. Pflaum, H.~Posthuma, and X.~Tang, \emph{Geometry of orbit spaces of
  proper {L}ie groupoids}, J. Reine Angew. Math. \textbf{694} (2014), 49--84.
  \MR{3259039}

\bibitem{PPTGrauertGrothendieck}
\bysame, \emph{The {G}rauert-{G}rothendieck complex on differentiable spaces
  and a sheaf complex of {B}rylinski}, Methods Appl. Anal. \textbf{24} (2017),
  no.~2, 321--332. \MR{3746621}

\bibitem{Pillay}
A.~Pillay, \emph{On groups and fields definable in {$o$}-minimal structures},
  J. Pure Appl. Algebra \textbf{53} (1988), no.~3, 239--255. \MR{961362}

\bibitem{RenaultGpdCstar}
J.~Renault, \emph{A groupoid approach to {$C^{\ast} $}-algebras}, Lecture Notes
  in Mathematics, vol. 793, Springer, Berlin, 1980. \MR{584266}

\bibitem{Roan}
S.-S. Roan, \emph{Minimal resolutions of {G}orenstein orbifolds in dimension
  three}, Topology \textbf{35} (1996), no.~2, 489--508. \MR{1380512}

\bibitem{SatakeGB}
I.~Satake, \emph{The {G}auss-{B}onnet theorem for {$V$}-manifolds}, J. Math.
  Soc. Japan \textbf{9} (1957), 464--492. \MR{95520}

\bibitem{Sea2GBPH}
C.~Seaton, \emph{Two {G}auss-{B}onnet and {P}oincar\'{e}-{H}opf theorems for
  orbifolds with boundary}, Differential Geom. Appl. \textbf{26} (2008), no.~1,
  42--51. \MR{2393971}

\bibitem{Slominska}
J.~S{\l}omi\'{n}ska, \emph{On the equivariant {C}hern homomorphism}, Bull.
  Acad. Polon. Sci. S\'{e}r. Sci. Math. Astronom. Phys. \textbf{24} (1976),
  no.~10, 909--913. \MR{461489}

\bibitem{TamanoiMorava}
H.~Tamanoi, \emph{Generalized orbifold {E}uler characteristic of symmetric
  products and equivariant {M}orava {$K$}-theory}, Algebr. Geom. Topol.
  \textbf{1} (2001), 115--141. \MR{1805937}

\bibitem{TamanoiCovering}
\bysame, \emph{Generalized orbifold {E}uler characteristics of symmetric
  orbifolds and covering spaces}, Algebr. Geom. Topol. \textbf{3} (2003),
  791--856. \MR{1997338}

\bibitem{Thurston}
W.~Thurston, \emph{The geometry and topology of 3-manifolds}, Princeton
  University Math Dept., Princeton, New Jersey, 1978, Lecture Notes.

\bibitem{tomDieckTGRepThry}
T.~tom Dieck, \emph{Transformation groups and representation theory}, Lecture
  Notes in Mathematics, vol. 766, Springer, Berlin, 1979. \MR{551743}

\bibitem{TrentinagliaRoleReps}
G.~Trentinaglia, \emph{On the role of effective representations of {L}ie
  groupoids}, Adv. Math. \textbf{225} (2010), no.~2, 826--858. \MR{2671181}

\bibitem{TrentinagliaGlobStruc}
\bysame, \emph{Some remarks on the global structure of proper {L}ie groupoids
  in low codimensions}, Topology Appl. \textbf{158} (2011), no.~5, 708--717.
  \MR{2774054}

\bibitem{TuNonHausdorff}
J.-L. Tu, \emph{Non-{H}ausdorff groupoids, proper actions and {$K$}-theory},
  Doc. Math. \textbf{9} (2004), 565--597. \MR{2117427}

\bibitem{vandenDriesBook}
L.~van~den Dries, \emph{Tame topology and o-minimal structures}, London
  Mathematical Society Lecture Note Series, vol. 248, Cambridge University
  Press, Cambridge, 1998. \MR{1633348}

\bibitem{vandenDriesMillerGeomCat}
L.~van~den Dries and C.~Miller, \emph{Geometric categories and o-minimal
  structures}, Duke Math. J. \textbf{84} (1996), no.~2, 497--540. \MR{1404337}

\bibitem{ViroIntegralEulerChar}
O.~Y. Viro, \emph{Some integral calculus based on {E}uler characteristic},
  Topology and geometry---{R}ohlin {S}eminar, Lecture Notes in Math., vol.
  1346, Springer, Berlin, 1988, pp.~127--138. \MR{970076}

\bibitem{Zelobenko}
D.~P. \v{Z}elobenko, \emph{Compact {L}ie groups and their representations},
  Translations of Mathematical Monographs, Vol. 40, American Mathematical
  Society, Providence, R.I., 1973, Translated from the Russian by Israel
  Program for Scientific Translations. \MR{0473098}

\bibitem{WangThesis}
K.~J. Wang, \emph{Proper {L}ie groupoids and their orbit spaces}, 2018, Ph.D.
  Thesis, Korteweg-de Vries Institute for Mathematics (KdVI).

\bibitem{WeinsteinLineariz}
A.~Weinstein, \emph{Linearization of regular proper groupoids}, J. Inst. Math.
  Jussieu \textbf{1} (2002), no.~3, 493--511. \MR{1956059}

\bibitem{Zung}
N.~T. Zung, \emph{Proper groupoids and momentum maps: linearization, affinity,
  and convexity}, Ann. Sci. \'{E}cole Norm. Sup. (4) \textbf{39} (2006), no.~5,
  841--869. \MR{2292634}

\end{thebibliography}

\end{document}